\documentclass[reqno,10pt]{amsart}

\usepackage[utf8]{inputenc}

\usepackage{amssymb,amsmath,amsthm,amsfonts,color}
\usepackage{mathrsfs,dsfont,comment,mathscinet}
\usepackage{todonotes,cancel,soul}
\usepackage{tikz}

\usepackage{enumerate,esint}
\usepackage[left=2.8cm,right=2.8cm,top=2.8cm,bottom=2.8cm]{geometry}
\parskip1mm
\usepackage{mathtools}
\usepackage{graphicx,tikz}
\usepackage[format=hang,labelfont=bf]{caption}
\usepackage{ wasysym }

\usepackage{epstopdf}
\DeclareGraphicsRule{.tif}{png}{.png}{`convert #1 `dirname #1`/`basename #1 .tif`.png}

\usepackage[colorlinks=true, pdfstartview=FitV, linkcolor=blue, 
            citecolor=blue, urlcolor=blue]{hyperref}


\tolerance=10000
\allowdisplaybreaks
\numberwithin{equation}{section}
\theoremstyle{plain}
\newtheorem{theorem}{Theorem}[section]
\newtheorem{proposition}[theorem]{Proposition}
\newtheorem{lemma}[theorem]{Lemma}
\newtheorem{example}[theorem]{Example}

\theoremstyle{definition}

\newtheorem{remark}[theorem]{Remark}

\newtheorem*{theorem*}{Theorem}

\makeatletter
\def\th@plain{%
  \thm@notefont{}
  \itshape 
}
\def\th@definition{%
  \thm@notefont{}
  \normalfont 
}
\makeatother

\definecolor{mblue}{HTML}{13439b}

\newcommand\R{\mathbb R}

\newcommand\N{\mathbb N}
\newcommand\Z{\mathbb Z}
\renewcommand\S{\mathbb S}

\renewcommand{\d}{\mathrm{d}}

\newcommand{\restr}[1]{|_{#1}}

\newcommand{\loc}{\mathrm{loc}}
\newcommand{\var}{\mathrm{Var}}

\newcommand{\cof}{\mathrm{cof}}
\newcommand{\adj}{\mathrm{adj}}

\renewcommand{\deg}{\mathrm{deg}}
\renewcommand{\div}{\mathrm{div}}
\newcommand{\curl}{\mathrm{curl}}
\newcommand{\dist}{\mathrm{dist}}
\newcommand\wk{\rightharpoonup}
\newcommand{\rtt}{\R^{3\times 3}}
\newcommand{\wks}{\overset{\ast}{\rightharpoonup}}

\newcommand*\closure[1]{\overline{#1}}

\newcommand{\leb}{\mathscr{L}^3}
\newcommand{\haus}{\mathscr{H}^2}

\newcommand{\mres}{\mathbin{\vrule height 1.6ex depth 0pt width
		0.13ex\vrule height 0.13ex depth 0pt width 1.3ex}}


\def\XXint#1#2#3{{\setbox0=\hbox{$#1{#2#3}{\int}$ }
\vcenter{\hbox{$#2#3$ }}\kern-.6\wd0}}

\newcommand\RRR{\color{black}}
\newcommand\EEE{\color{black}}
\newcommand\MMM{\color{black}}

\title{
\MMM Existence results in large-strain magnetoelasticity}
\author[M. Bresciani]{Marco Bresciani}
\address[M. Bresciani]{
		Institute of Analysis and Scientific Computing,
		TU Wien,
		Wiedner Hauptstrasse 8-10, 1040 Vienna, Austria.
		Email: \href{mailto:marco.bresciani@tuwien.ac.at}{\tt marco.bresciani@tuwien.ac.at}.
}
\author[E. Davoli]{Elisa Davoli}
\address[E. Davoli]{
		Institute of Analysis and Scientific Computing,
		TU Wien,
		Wiedner Hauptstrasse 8-10, 1040 Vienna, Austria.
		Email: \href{mailto:elisa.davoli@tuwien.ac.at}{\tt elisa.davoli@tuwien.ac.at}.
}
\author[M. Kru\v{z}\'ik]{Martin Kru\v{z}\'ik}
\address[M. Kru\v{z}\'ik]{Czech Academy of Sciences, 
Institute of Information Theory and Automation, 
Pod Vod\'arenskou V\v{e}\v{z}\'i 4, 182 00 Prague, Czechia and Faculty of Civil Engineering, Czech Technical
University, Th\'{a}kurova 7, 166 29 Prague, Czechia.
		Email: \href{mailto: kruzik@utia.cas.cz}{\tt kruzik@utia.cas.cz}.
}

\date{\today}
\keywords{magnetoelasticity, Eulerian-{\MMM Lagrangian} energies, quasistatic evolution, large-strain theories}
\subjclass[2000]{49J45; 74C99; 74F15}

\begin{document}

\setlength\parindent{0pt}

\vskip .2truecm
\begin{abstract}
We investigate  \MMM variational \EEE problems in large-strain  magnetoelasticity, \MMM both in the static and in the quasistatic setting\EEE. \MMM The model contemplates a mixed Eulerian-{\MMM Lagrangian} formulation: while deformations are defined on the reference configuration, magnetizations are defined on the deformed set in the actual space. In the static setting, we establish the existence of minimizers. In particular, we provide a compactness result for sequences of admissible states with equi-bounded energies which gives the convergence of the composition of magnetizations with deformations. In the quasistatic setting, we consider a notion of dissipation which is  frame-indifferent and we show that the incremental minimization problem is solvable. Then, we propose a regularization of the model in the spirit of gradient polyconvexity and we prove the existence of energetic solutions for the regularized model.\EEE 
\end{abstract}
\maketitle

\section{Introduction}

In this paper, {\MMM we study a variational model for magnetoelastic materials at large strains and we provide existence results for optimal configurations, both in the static and in the quasistatic setting.}

The theory of {\MMM Brown \cite{brown}} (see also \cite{desimone.dolzmann,desimone.podioguidugli,james.kinderlehrer}) is based on the assumption that equilibrium configurations of a magnetoelastic body are {\MMM given by} minimizers of  an energy functional that depends on the deformation $\boldsymbol{y}:\Omega\to\R^3$ and on the magnetization  $\boldsymbol{m}:\boldsymbol{y}(\Omega)\to\S^2$, where $\Omega \subset \R^3$ represents the reference configuration of the body and $\S^2$ denotes the unit sphere in $\R^3$. {\MMM The fact that magnetizations are sphere-valued resembles the constraint of magnetic saturation \cite{brown}  which, up to normalization, reads $|\boldsymbol{m}|=1$ in $\boldsymbol{y}(\Omega)$.} 
The {\MMM magnetoelastic} energy functional is defined, for $\boldsymbol{q}=(\boldsymbol{y},\boldsymbol{m})$, by setting
\begin{equation}
    \label{eqn:energy-E-0}
    E(\boldsymbol{q})\coloneqq \int_\Omega W(\nabla\boldsymbol{y},\boldsymbol{m}\circ \boldsymbol{y})\,\d\boldsymbol{x}+  \alpha\int_{\boldsymbol{y}(\Omega)}|\nabla \boldsymbol{m}|^2\,\d \boldsymbol{\xi}+\frac{\mu_0}{2}\int_{\R^3}|\nabla \zeta_{\boldsymbol{m}}|^2\,\d\boldsymbol{\xi}. 
\end{equation}
Here, $W$ denotes a nonlinear, frame-indifferent, magnetostrictive energy density {\MMM and the corresponding integral in \eqref{eqn:energy-E-0} represents the elastic energy}.  The second term in \eqref{eqn:energy-E-0} is termed exchange energy, penalizing spatial changes of $\boldsymbol{m}$; $\alpha>0$ is the exchange constant. The third contribution in \eqref{eqn:energy-E-0} encodes the magnetostatic energy and favors  divergence-free states of the magnetization; $\mu_0>0$ is the permeability of the vacuum.  In particular, the stray-field potential  $\zeta_{\boldsymbol{m}}\colon \R^3 \to \R$ is defined as a weak solution of the magnetostatic Maxwell equation \cite{brown,desimone.podioguidugli,james.kinderlehrer}:
\begin{equation*}
    \Delta \zeta_{\boldsymbol{m}}=\div(\chi_{\boldsymbol{y}(\Omega)}\boldsymbol{m})\:\text{in}\:\R^3.
\end{equation*}

{\MMM The magnetoelastic energy functional usually includes additional terms, such as the anisotropy energy and the asymmetric exchange energy, also known as Dzyaloshinskii-Moriya Interaction (DMI) energy \cite{dzyaloshinsky,moriya}. For $\boldsymbol{q}=(\boldsymbol{y},\boldsymbol{m})$, these two terms are respectively given by
\begin{equation}
\label{eqn:energy-anisotropy-DMI}
 E^{\rm ani}(\boldsymbol{q})\coloneqq \int_{\boldsymbol{y}(\Omega)}\phi(\boldsymbol{m})\,\d\boldsymbol{\xi},\qquad E^{\rm DMI}(\boldsymbol{q})\coloneqq \kappa \int_{\boldsymbol{y}(\Omega)} \curl\boldsymbol{m}\cdot\boldsymbol{m}\,\d\boldsymbol{\xi}.
\end{equation}
The first term takes into account the magnetocrystalline anisotropy; the function $\phi\colon \S^2\to\R$ is  continuous and nonnegative, and vanishes only on a finite set of directions, the easy axis, along which magnetizations  tend  to align themselves spontaneously. The DMI energy is linked with the possible lack of centrosymmetry in the crystalline structure.} The sign of the parameter $\kappa\in \mathbb{R}$ is not prescribed. According to its value, this energy term alone would be minimized by configurations satisfying $\curl\, \boldsymbol{m}=\pm \boldsymbol{m}$, or equivalently, $\pm\boldsymbol{m}=-\Delta \boldsymbol{m}$. {\MMM Altogether}, the sum of the symmetric and DMI exchange is optimized by helical fields $\boldsymbol{m}$ describing a rotation of constant frequency $\kappa$ orthogonal to one of the coordinate axes, and rotating clockwise or counter-clockwise according to the sign of $\kappa$ \cite{davoli.difratta,lund.muratov,melcher}.

{\MMM The two energy contributions in \eqref{eqn:energy-anisotropy-DMI} do not introduce additional mathematical challenges in our analysis as the corresponding functionals are continuous with respect to the topology considered. Therefore, these energy terms will be neglected and we will consider the magnetoelastic energy functional in \eqref{eqn:energy-E-0}. The same holds for applied loads which, in first instance, will also be neglected. These comprise applied body and surfaces forces and external magnetic fields.}

A peculiar feature of the energy in \eqref{eqn:energy-E-0} consists in its mixed Eulerian-{\MMM Lagrangian} structure. Whereas the elastic energy is evaluated on the reference configuration, and is hence {\MMM Lagrangian}, in fact, all magnetic contributions are set on the actual deformed set, thus being Eulerian. 
{\MMM Therefore,} the problem needs to be formulated in a class of admissible states such that the deformed set $\boldsymbol{y}(\Omega)$ {\MMM corresponding to each deformation} $\boldsymbol{y}$ {\MMM can be suitably interpreted}.

The existence of minimizers for the functional in \eqref{eqn:energy-E-0} has been {\MMM firstly} proven in \cite{rybka.luskin} for non-simple materials, and then in \cite{barchiesi.desimone,kruzik.stefanelli.zeman} for simple materials under the constraint of incompressibility. {\MMM In \cite{kruzik.stefanelli.zeman}, the authors also studied quasistatic evolutions driven by time-dependent applied loads and a rate-independent dissipation, and established the existence of energetic solutions.} Subsequently, in \cite{barchiesi.henao.moracorral}, the existence of minimizers for \eqref{eqn:energy-E-0} has been obtained for compressible materials under weak growth assumptions on the elastic energy density. The analysis in this case becomes quite technical because of the  possible discontinuity of deformations. {\MMM A further extension of this result, contemplating an even larger class of admissible deformations, has been obtained in \cite{henao.stroffolini}. Actually, the analysis in \cite{barchiesi.desimone,henao.stroffolini} concerns nematic elastomers; in that case, the variable $\boldsymbol{m}$ represents the nematic director and the magnetostatic energy term is dropped. However, from the mathematical point of view, the problem is substantially the same as the one of magnetoelasticity.}

To complete our review on magnetoelasticity, we also mention a few recent works dealing with the analysis of magnetoelastic thin films. In particular, in \cite{kruzik.stefanelli.zanini} magnetoelastic plates and their corresponding quasistatic evolutions are studied within the  framework of linearized elasticity {\MMM in a purely {\MMM Lagrangian} setting}. A large-strain analysis of magnetoelastic plates has been initiated in \cite{liakhova}, under a priori constraints on the Jacobian {\MMM determinant} of deformations (see also \cite{liakhova.luskin.zhang,luskin.zhang} for numerical results).  The membrane regime for nonsimple materials has been recently investigated in \cite{davoli.kruzik.piovano.stefanelli}, whereas von K\'{a}rm\'{a}n theories  starting from a nonlinear model have been identified in \cite{bresciani} in the case of incompressible materials.

{\MMM This paper is subdivided into two parts. The first part concerns the analysis of the variational model in the static case. Our setting is comparable with the one in \cite{kruzik.stefanelli.zeman}, except that, here, we are not restricted to incompressible materials. In particular, the coercivity properties of the elastic energy density ensure that admissible deformations are continuous.
The main contribution in the first part consists in proving existence of equilibrium configurations for the magnetoelastic energy in \eqref{eqn:energy-E-0}.}
A simplified version of \MMM this \EEE result reads as follows. We refer to Theorem \ref{thm:existence-minimizers} for the precise statement and assumptions.

\begin{theorem}[\MMM Existence of minimizers\EEE]
\label{thm:meta1}
Assume that the {\MMM elastic} energy density $W$ is continuous, $p$-coercive {\MMM with} $p>3$, blows up under extreme compressions and is polyconvex {\MMM in its first argument}. Assume also that interpenetration of matter is prevented. Then, the magnetoelastic energy in \eqref{eqn:energy-E-0} admits a minimizer.
\end{theorem}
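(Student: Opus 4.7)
My proof would proceed by the direct method of the calculus of variations. Let $\boldsymbol{q}_n=(\boldsymbol{y}_n,\boldsymbol{m}_n)$ be a minimizing sequence for $E$. The $p$-coercivity of $W$ yields a uniform bound on $\nabla\boldsymbol{y}_n$ in $L^p(\Omega;\R^{3\times 3})$; together with a boundary datum or normalization this gives a uniform bound in $W^{1,p}(\Omega;\R^3)$, and since $p>3$, Morrey's embedding produces a subsequence with $\boldsymbol{y}_n\to\boldsymbol{y}$ uniformly on $\overline{\Omega}$ and $\nabla\boldsymbol{y}_n\wk\nabla\boldsymbol{y}$ in $L^p$. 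The blow-up of $W$ under extreme compressions forces a positive uniform lower bound on $\det\nabla\boldsymbol{y}_n$, while Ball's classical weak-continuity of minors gives $\det\nabla\boldsymbol{y}_n\wk\det\nabla\boldsymbol{y}$ with $\det\nabla\boldsymbol{y}>0$ a.e. The Ciarlet--Ne\v{c}as non-interpenetration condition is stable under this convergence, so $\boldsymbol{y}$ is a.e.\ injective. On the magnetic side, the saturation constraint $|\boldsymbol{m}_n|=1$ produces a free $L^\infty$ bound; Young's inequality absorbs the DMI contribution into the symmetric exchange energy modulo a mass term controlled by $|\Omega^{\boldsymbol{y}_n}|=\int_\Omega\det\nabla\boldsymbol{y}_n\,\d\boldsymbol{x}$, yielding a uniform bound on $\|\nabla\boldsymbol{m}_n\|_{L^2(\Omega^{\boldsymbol{y}_n})}$.

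The main obstacle, and the reason the paper insists on a careful treatment of invertibility and of the regularity of inverse maps, is that $\boldsymbol{m}_n$ lives on the moving deformed domain $\Omega^{\boldsymbol{y}_n}$. My plan is to pull the magnetization back to the fixed reference configuration: the composed maps $\widetilde{\boldsymbol{m}}_n\coloneqq\boldsymbol{m}_n\circ\boldsymbol{y}_n\colon\Omega\to\S^2$ are defined on $\Omega$, and by the chain rule combined with the uniform positivity of $\det\nabla\boldsymbol{y}_n$ and the change-of-variables formula, the Eulerian exchange bound transfers to a uniform Sobolev bound on $\widetilde{\boldsymbol{m}}_n$. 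Up to a subsequence, $\widetilde{\boldsymbol{m}}_n\to\widetilde{\boldsymbol{m}}$ strongly in some $L^r(\Omega)$. Using the uniform convergence $\boldsymbol{y}_n\to\boldsymbol{y}$ together with the convergence of the inverse maps $\boldsymbol{y}_n^{-1}\to\boldsymbol{y}^{-1}$ on $\Omega^{\boldsymbol{y}}$ (a consequence of the Ciarlet--Ne\v{c}as bound and the uniform positivity of the determinant), one then identifies an Eulerian limit $\boldsymbol{m}\coloneqq\widetilde{\boldsymbol{m}}\circ\boldsymbol{y}^{-1}$ on $\Omega^{\boldsymbol{y}}$, and obtains $\nabla\boldsymbol{m}_n\wk\nabla\boldsymbol{m}$ after extending by zero to a fixed large ball containing all $\Omega^{\boldsymbol{y}_n}$. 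This step is the technical heart of the argument and requires the Barchiesi--Henao--Mora-Corral style machinery on convergence of inverses referenced in the introduction.

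Once these convergences are in place, lower semicontinuity of each energy term is essentially standard. The magnetoelastic integrand $W(\nabla\boldsymbol{y}_n,\widetilde{\boldsymbol{m}}_n)$ lives on the fixed domain $\Omega$, is polyconvex in the first argument with continuous dependence on the second, so the Acerbi--Fusco/Ball theory gives $\int_\Omega W(\nabla\boldsymbol{y},\boldsymbol{m}\circ\boldsymbol{y})\,\d\boldsymbol{x}\leq\liminf_n\int_\Omega W(\nabla\boldsymbol{y}_n,\widetilde{\boldsymbol{m}}_n)\,\d\boldsymbol{x}$. The symmetric exchange term is weakly lower semicontinuous by convexity of $|\cdot|^2$ after the extension step. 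The stray field is handled via continuity of the magnetostatic Poisson problem with respect to $L^2$-convergence of the data $\chi_{\Omega^{\boldsymbol{y}_n}}\boldsymbol{m}_n$, combined with lower semicontinuity of the Dirichlet energy of $\zeta_{\boldsymbol{m}_n}$. Finally, the DMI term is actually continuous along the subsequence: Rellich gives strong $L^2$-convergence of $\boldsymbol{m}_n$ while $\curl\boldsymbol{m}_n\wk\curl\boldsymbol{m}$ weakly in $L^2$, so the duality pairing passes to the limit. Collecting these inequalities yields $E(\boldsymbol{q})\leq\liminf_n E(\boldsymbol{q}_n)=\inf E$, so $\boldsymbol{q}=(\boldsymbol{y},\boldsymbol{m})$ is a minimizer.
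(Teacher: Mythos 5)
The central gap is your claim that ``the blow-up of $W$ under extreme compressions forces a positive uniform lower bound on $\det\nabla\boldsymbol{y}_n$.'' It does not. The assumption \eqref{eqn:W-coercivity} only provides a uniform bound $\int_\Omega\gamma(\det\nabla\boldsymbol{y}_n)\,\d\boldsymbol{x}\leq C$ with $\gamma(h)\to+\infty$ as $h\to 0^+$; this controls how much mass the Jacobian can concentrate near zero, but it is perfectly compatible with $\operatorname{ess\,inf}_\Omega\det\nabla\boldsymbol{y}_n\to 0$ (for instance $\det\nabla\boldsymbol{y}_n$ behaving like a small power of the distance to an interior point). Since your entire pullback strategy --- transferring the Eulerian $L^2$-bound on $\nabla\boldsymbol{m}_n$ to a Sobolev bound on $\widetilde{\boldsymbol{m}}_n=\boldsymbol{m}_n\circ\boldsymbol{y}_n$ via the change-of-variables identity $\int_\Omega g\circ\boldsymbol{y}_n\,\det\nabla\boldsymbol{y}_n\,\d\boldsymbol{x}=\int_{\Omega^{\boldsymbol{y}_n}}g\,\d\boldsymbol{\xi}$ --- hinges on dividing by $\det\nabla\boldsymbol{y}_n$, it collapses once that quantity is not bounded away from zero. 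Relatedly, your assertion that the inverses $\boldsymbol{y}_n^{-1}$ converge to $\boldsymbol{y}^{-1}$ is stated without proof and is itself delicate, as these maps are defined on the moving sets $\Omega^{\boldsymbol{y}_n}$; the paper deliberately avoids establishing such a convergence.

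What the paper actually extracts from the blow-up of $\gamma$ is an \emph{equi-integrability} statement, not a pointwise lower bound: via the change of variables, $\int_{\Omega^{\boldsymbol{y}_n}}\widehat{\gamma}(\det\nabla\boldsymbol{y}_n^{-1})\,\d\boldsymbol{\xi}=\int_\Omega\gamma(\det\nabla\boldsymbol{y}_n)\,\d\boldsymbol{x}\leq C$ with $\widehat{\gamma}(k)=k\,\gamma(1/k)$ superlinear, so by de la Vall\'ee Poussin the family $(\det\nabla\boldsymbol{y}_n^{-1})$ is equi-integrable on compact subsets of $\Omega^{\boldsymbol{y}}$. This is exactly the right substitute: combined with the area formula for the inverse it controls $\leb(\boldsymbol{y}_n^{-1}(F))$ uniformly for $\leb(F)$ small, and then the convergence $\boldsymbol{m}_n\circ\boldsymbol{y}_n\to\boldsymbol{m}\circ\boldsymbol{y}$ in $L^a(\Omega)$ is proved directly on the Eulerian side by a Lusin--Egorov argument on inner sets $A\subset\subset\Omega^{\boldsymbol{y}}$ (which, by uniform convergence and degree stability, eventually satisfy $A\subset\Omega^{\boldsymbol{y}_n}$). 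This bypasses any need for a Sobolev bound on the Lagrangean pullback or for convergence of the inverse maps. The lower-semicontinuity part of your proposal is broadly sound (polyconvexity for the elastic term, weak l.s.c.\ for the exchange, strong $\times$ weak for the DMI term, stability of the magnetostatic Poisson problem), provided the convergences \eqref{eqn:Q-convergence-m}--\eqref{eqn:Q-convergence-nabla-m} are first established via the inner/outer approximations of $\Omega^{\boldsymbol{y}}$ rather than the extension-by-zero step you sketch, which is not automatic on moving domains.
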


{\MMM As already mentioned, the existence of minimizers for the functional in \eqref{eqn:energy-E-0} has been already established, even for larger classes of admissible deformations, in \cite{barchiesi.desimone,barchiesi.henao.moracorral,henao.stroffolini}. However,  here, the result is proven in a more direct way by arguing similarly to \cite{kruzik.stefanelli.zeman}. The main point is the compactness of sequences of admissible states with equi-bounded energies achieved in Proposition \ref{prop:compactness}. In particular, we prove the convergence of the composition of magnetizations with deformations. From this, the lower semicontinuity of the elastic energy, which represents the most problematic term, is obtained by a standard application of the classical Eisen Selection Lemma \cite{eisen}. 

The convergence of the compositions of magnetizations with deformations constitutes one of the main novelties of our paper and is going to be fundamental for the analysis in the quasistatic setting. Note that this is a very delicate issue: indeed, contrary to deformations, magnetizations may be discontinuous.
In \cite{kruzik.stefanelli.zeman}, the convergence of compositions follows easily from the fact that deformations are volume-preserving. Instead, in \cite{barchiesi.henao.moracorral}, this issue is circumvented by working in the deformed configuration and by exploiting the weak convergence of inverse deformations together with their Jacobian minors.

The techniques employed in our analysis require a careful study of the geometry of the deformed set and of fine and invertibility properties of admissible deformations. Essential tools are given by the topological degree \cite{fonseca.gangbo} and by refined versions of the area formula and the change-of-variable formula \cite{federer,hajlasz}. 
In Lemma \ref{lem:deformed-configuration}, we see that the deformed set $\boldsymbol{y}(\Omega)$ can be replaced with a suitable open subset of it which has full measure. This is necessary in order to be able to give a precise meaning to the gradient of magnetizations appearing in the expression of the exchange energy. 

The proof of the convergence of the compositions of magnetizations with deformations combines three main ingredients: the convergences of the deformed sets, the equi-integrability of the Jacobian determinants of inverse deformations, and two classical results of measure theory, namely  Egorov and Lusin Theorems.} For a similar approach relying on the equi-integrability of Jacobian determinants of inverse {\MMM deformations}, we refer to \cite{grandi.kruzik.mainini.stefanelli}.
{\MMM In that paper, the desired equi-integrability property follows from some a priori control on the distortion of admissible deformations obtained by imposing specific growth conditions on the elastic energy density, while, here, this property is deduced from the singular behaviour of the elastic energy density in response to extreme compressions. This allows us to work with a more natural class of admissible deformations, which are not necessarily homeomorphisms.

}

{\MMM In} the second part of our paper,  {\MMM we} study quasi-static evolutions driven by the energy functional in \eqref{eqn:energy-E-0}, complemented by {\MMM the work of }time-dependent applied loads determined by external body forces, surface forces, and magnetic fields, {\MMM and a rate-independent dissipation}. Our analysis is set within the theory of rate-independent processes \cite{mielke.roubicek} {  \MMM with the notion of} energetic solution. 

{\MMM Our setting is again similar to the one in \cite{kruzik.stefanelli.zeman},  but} a key difference consists in our definition of dissipation distance. {\MMM In \cite{kruzik.stefanelli.zeman}, this is simply defined as the distance in $L^1$ between the compositions of magnetizations with deformations. Here, instead, this is constructed by introducing a dissipative variable, the Lagrangian magnetization, which is obtained as the  pull-back of the magnetization to the reference configuration. More precisely,} for $\boldsymbol{q}=(\boldsymbol{y},\boldsymbol{m})\in \mathcal{Q}$, this is given by
\begin{equation}
    \label{eqn:Lagrangean-magnetization-intro}
    \mathcal{Z}(\boldsymbol{q})\coloneqq (\adj \nabla \boldsymbol{y}) \, \boldsymbol{m}\circ \boldsymbol{y},
\end{equation}
where {\MMM the adjugate matrix denotes simply} the transpose of the cofactor matrix.
Then, the dissipation distance $\mathcal{D}\colon \mathcal{Q}\times \mathcal{Q}\to [0,+\infty)$ is defined as
\begin{equation}
    \label{eqn:dissipation-distance-intro}
    \mathcal{D}(\boldsymbol{q},\widehat{\boldsymbol{q}})\coloneqq \int_\Omega |\mathcal{Z}(\boldsymbol{q})-\mathcal{Z}(\widehat{\boldsymbol{q}})|\,\d \boldsymbol{x}.
\end{equation}
{\MMM We observe that this dissipation is frame-indifferent, i.e. rigid motions do not dissipate energy. This because the Lagrangian magnetization in \eqref{eqn:Lagrangean-magnetization-intro} is an objective quantity. To see this, let $\boldsymbol{T}\colon \R^3\to \R^3$ be a rigid motion of the form $\boldsymbol{T}(\boldsymbol{\xi})\coloneqq \boldsymbol{Q}\,\boldsymbol{\xi}+\boldsymbol{c}$ for every $\boldsymbol{\xi}\in \R^3$, where $\boldsymbol{Q}\in SO(3)$ and $\boldsymbol{c}\in \R^3$. The admissible state $\widetilde{\boldsymbol{q}}=(\widetilde{\boldsymbol{y}},\widetilde{\boldsymbol{m}})\in\mathcal{Q}$ obtained from $\boldsymbol{q}=(\boldsymbol{y},\boldsymbol{m})\in \mathcal{Q}$ by superposition with $\boldsymbol{T}$ is defined by setting $\widetilde{\boldsymbol{y}}\coloneqq \boldsymbol{T}\circ \boldsymbol{y}$ and $\widetilde{\boldsymbol{m}}\coloneqq \boldsymbol{Q}\,(\boldsymbol{m}\circ \boldsymbol{T}^{-1})$. Thus, $\mathcal{Z}(\widetilde{\boldsymbol{q}})=\mathcal{Z}(\boldsymbol{q})$.}
{\MMM This observation demonstrates that the dissipation introduced in \eqref{eqn:dissipation-distance-intro} is selective enough to only account for the part of the magnetic reorientation that corresponds to changes of the geometry of the deformed sets.}

{\MMM
We mention that \eqref{eqn:Lagrangean-magnetization-intro} is not the only possible way to pull-back of the magnetization to the reference configuration as an objective quantity. Our choice preserves the flux of magnetizations though closed surfaces. Another possible choice, preserving the circulation of magnetizations along closed loops, consists in replacing the matrix $\adj\nabla\boldsymbol{y}$ in \eqref{eqn:Lagrangean-magnetization-intro} with $(\nabla\boldsymbol{y})^\top$. Both quantities have been considered in different contexts (see \cite{antman.rogers,rogers,roubicek.tomassetti} for the flux-preserving pull-back and \cite{kankanala.triantafyllidis,sharma.saxena} for the circulation-preserving pull-back). Here, the pull-back in \eqref{eqn:Lagrangean-magnetization-intro} is preferred as it appears naturally while rewriting the magnetostatic energy as an integral on the reference configuration \cite{rogers}.
}
\EEE

{\MMM The compactness established in Proposition \ref{prop:compactness}, ensures that the dissipation distance in \eqref{eqn:dissipation-distance-intro} is lower semicontinuous on the sublevel sets of the total energy. Specifically, this follows combining the weak continuity of the Jacobian cofactor with the convergence of the compositions of magnetizations with deformations.
As a consequence, the incremental minimization problem is solvable for each fixed partition of the time interval. 
Nevertheless, the existence of energetic solutions is} out-of-reach in {\MMM this} framework. {\MMM Roughly speaking, this is because the dissipation distance is not continuous on the sublevel sets of the total energy. Such a situation is quite common for large-strain theories (see \cite{mainik.mielke} for an example in finite plasticity).}

{\MMM Therefore, in the last part of the paper,} we resort to a regularized counterpart to the functional in \eqref{eqn:energy-E-0}, which is obtained by augmenting the magnetoelastic energy by the total variation of the Jacobian cofactor of the deformation. Namely, for every $\boldsymbol{q}=(\boldsymbol{y},\boldsymbol{m})\in \mathcal{Q}$, we {\MMM consider the regularized energy functional}
\begin{equation}
\label{eq:int-en-reg}
    \widetilde{E}(\boldsymbol{q})\coloneqq E(\boldsymbol{q})+|D (\cof\,\nabla \boldsymbol{y})|(\Omega).
\end{equation}
This brings us to the theory of nonsimple materials initiated by Toupin \cite{toupin1, toupin2} and later extended by many authors (see, for instance, \cite{ball.currie.olver,mielke.roubicek,roubicek.tomassetti}). The idea is to assume that the stored energy density depends also on higher-order gradients of the deformation.  More regularity allows us to work in a stronger topology and {\MMM to gain the continuity of the dissipation distance on the sublevel sets of the regularized total energy}.  Here, we apply a fairly weak concept of nonsimple materials  introduced in \cite{benesova.kruzik.schloemerkemper} under the name of { gradient polyconvex materials} (see also \cite{kruzik.roubicek}). Indeed, in view of \eqref{eq:int-en-reg},  we only need to assume that $\cof\,\nabla \boldsymbol{y}\in BV(\Omega;\R^{3\times 3})$. 

{\MMM Our second main result asserts the existence of energetic solutions for the regularized model. We present a simplified statement below and} we refer to Theorem \ref{thm:existence-energetic-solution} for its precise formulation.

\begin{theorem}[\MMM Existence of energetic solutions\EEE]
\label{thm:meta2}
{\MMM Under the same assumptions of Theorem \ref{thm:meta1}, there exists an energetic solution for the regularized model determined by the energy in \eqref{eq:int-en-reg}, complemented by time-dependent applied loads,} and the dissipation in \eqref{eqn:dissipation-distance-intro}.
\end{theorem}

{\MMM The existence of energetic solutions is proved by time-discretization following the well-established scheme introduced in \cite{francfort.mielke} (see also \cite{mielke.roubicek}). Thus, the compactness of time-discrete solutions is achieved by appealing to some versione of the Helly Selection Theorem, here provided by Lemma \ref{lem:helly}. We stress that the existence of time-discrete solutions is 
available even in the absence of the regularization introduced in \eqref{eq:int-en-reg} (see Proposition \ref{prop:solutions-imp}), and that this is only needed to construct time-continuous solutions. 

Note that taking the same definition of dissipation distance  as in \cite{kruzik.stefanelli.zeman}, simply given by the distance in $L^1$ between the compositions of magnetizations with deformations, we would be able to establish the existence of energetic solutions without resorting to any regularization.} 

\MMM To summarize, the novelty of our analysis is twofold. First, we prove the compactness of the compositions of magnetizations with deformations for sequences of admissible states with equi-bounded energies. This extends the compactness result obtained in \cite{kruzik.stefanelli.zeman} for incompressible materials to compressible ones. Moreover, this provides a more direct proof of the existence of minimizers for the functional in \eqref{eqn:energy-E-0} for compressible materials compared to  those available in \cite{barchiesi.desimone, barchiesi.henao.moracorral}. Note that, unlike in \cite{rybka.luskin}, no higher-order term is included in the magnetoelastic energy in the static setting. Second, in the quasistatic setting, we consider a more realistic notion of dissipation and we do not restrict ourselves to incompressible materials \cite{kruzik.stefanelli.zeman}. Solutions of the incremental minimization problem are shown to exist without resorting to any regularization. Finally, the existence of energetic solutions is achieved by including an additional energy term controlling the derivatives of the Jacobian cofator of deformations only, instead of the full Hessian matrix of deformations \cite{rybka.luskin}.
\EEE

{\MMM We remark that the choice to limit ourselves to the case of continuous deformations is taken just for convenience. We do not see substantial obstacles in extending our arguments to more general classes of possibly discontinuous deformations for which cavitation is excluded, like the ones considered in \cite{barchiesi.henao.moracorral,henao.stroffolini}, with the help of the techniques that have already been developed in these settings. Also, the global injectivity of admissible deformations is assumed in view of its physical interpretation, i.e. to avoid the interpenetration of matter, but this does not seem to be crucial for the analysis. It might be possible to achieve the same results without this assumption by relying on the local invertibility results available in the literature \cite{barchiesi.henao.moracorral,fonseca.gangbo,henao.stroffolini} in combination with suitable covering arguments.}

The paper is organized as follows. In Section \ref{sec:prel} we recall some preliminary results on the invertibility of Sobolev functions. Section \ref{sec:static} is devoted to {\MMM the analysis in the static setting including} the proof of Theorem \ref{thm:meta1}. Finally, Section \ref{sec:quasistatic} describes the quasistatic problem and contains the proof of Theorem \ref{thm:meta2}.

\section{Preliminaries}
\label{sec:prel}
In this section we collect some results regarding the invertibility of Sobolev maps with supercritical integrability. Let $\Omega\subset \R^3$ be a bounded Lipschitz domain. We consider maps in $ W^{1,p}(\Omega;\R^3)$ with $p>3$. Any such map admits a representative in $C^0(\closure{\Omega};\R^3)$ which has the {Lusin property $(N)$} \cite[Corollary 1]{marcus.mizel}, i.e. it maps sets of zero Lebesgue measure to sets of zero Lebesgue measure. Henceforth, \emph{we will always tacitly consider this representative}. In this case, the image of measurable sets is measurable and the area formula holds \cite[Corollary 2 and Theorem 2]{marcus.mizel}. As a consequence, if the Jacobian determinant is different from zero almost everywhere, then the map has also the {Lusin property $(N^{-1})$}, i.e. the preimage of every set with zero Lebesgue measure has zero Lebesgue measure. 

Let $\boldsymbol{y}\in W^{1,p}(\Omega;\R^3)$. To make up for the fact that $\boldsymbol{y}(\Omega)$ might not be open, even if $\det \nabla \boldsymbol{y}>0$ almost everywhere, we introduce the {deformed configuration}, which is defined as $\Omega^{\boldsymbol{y}}\coloneqq \boldsymbol{y}(\Omega)\setminus \boldsymbol{y}(\partial \Omega)$. To prove that this set is actually open, we employ the topological degree. Recall that the degree of $\boldsymbol{y}$ on $\Omega$ is a continuous map $\deg(\boldsymbol{y},\Omega,\cdot) \colon \R^3 \setminus \boldsymbol{y}(\partial \Omega)\to \Z$. For its definition and main properties, we refer to \cite{fonseca.gangbo2}.

\begin{lemma}[Deformed configuration]
\label{lem:deformed-configuration}
Let $\boldsymbol{y}\in W^{1,p}(\Omega;\R^3)$ be such that $\det \nabla \boldsymbol{y}>0$ almost everywhere in $\Omega$. Then, the deformed configuration $\Omega^{\boldsymbol{y}}$  is an open set that differs from $\boldsymbol{y}(\Omega)$ by at most a set of zero Lebesgue measure. Moreover $\closure{\Omega^{\boldsymbol{y}}}=\boldsymbol{y}(\closure{\Omega})$ and $\partial \Omega^{\boldsymbol{y}}=\boldsymbol{y}(\partial \Omega)$.
\end{lemma}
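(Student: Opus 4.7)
The plan is to identify the deformed configuration $\Omega^{\boldsymbol{y}}$ with the set
\[
U:=\{\boldsymbol{\xi}\in \R^3\setminus \boldsymbol{y}(\partial\Omega):\deg(\boldsymbol{y},\Omega,\boldsymbol{\xi})\neq 0\},
\]
which is automatically open since the degree is continuous and integer-valued. Throughout, I will use that $\boldsymbol{y}\in C^0(\overline{\Omega};\R^3)$, so that $\boldsymbol{y}(\partial\Omega)$ and $\boldsymbol{y}(\overline{\Omega})$ are compact, and that $|\boldsymbol{y}(\partial\Omega)|=0$ by Lusin $(N)$ combined with $|\partial\Omega|=0$.

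The inclusion $U\subset \Omega^{\boldsymbol{y}}$ is the standard solvability property of the degree. For the reverse inclusion---the main obstacle, since what we need is a pointwise rather than an almost-everywhere statement---I would fix $\boldsymbol{\xi}_0=\boldsymbol{y}(\boldsymbol{x}_0)\in \Omega^{\boldsymbol{y}}$, choose $r>0$ with $B_r(\boldsymbol{\xi}_0)\cap \boldsymbol{y}(\partial\Omega)=\emptyset$, and observe that the degree takes some constant integer value $d$ on the connected ball $B_r(\boldsymbol{\xi}_0)$. The key step is to apply the standard change-of-variable identity
\[
\int_\Omega (\varphi\circ \boldsymbol{y})\det\nabla \boldsymbol{y}\,\d\boldsymbol{x}=\int_{\R^3}\varphi(\boldsymbol{\xi})\,\deg(\boldsymbol{y},\Omega,\boldsymbol{\xi})\,\d\boldsymbol{\xi},
\]
valid for test functions supported off $\boldsymbol{y}(\partial\Omega)$, with $\varphi$ approximating $\chi_{B_r(\boldsymbol{\xi}_0)}$. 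This gives $d\,|B_r(\boldsymbol{\xi}_0)|=\int_{\boldsymbol{y}^{-1}(B_r(\boldsymbol{\xi}_0))\cap\Omega}\det\nabla\boldsymbol{y}\,\d\boldsymbol{x}$. Continuity of $\boldsymbol{y}$ at $\boldsymbol{x}_0$ places an open ball around $\boldsymbol{x}_0$ inside the preimage, and $\det\nabla\boldsymbol{y}>0$ almost everywhere then forces the right-hand side to be strictly positive. Thus $d\ge 1$ and $\boldsymbol{\xi}_0\in U$, showing $\Omega^{\boldsymbol{y}}=U$ is open. The device here---integrating over a region on which the degree is already constant---is precisely what upgrades the a.e. identification of $\deg$ with the multiplicity $N_{\boldsymbol{y},\Omega}$ into a pointwise nonvanishing statement.

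The remaining identities follow by set-theoretic bookkeeping. First, $\boldsymbol{y}(\Omega)\setminus \Omega^{\boldsymbol{y}}\subset \boldsymbol{y}(\partial\Omega)$, hence has zero Lebesgue measure. For $\overline{\Omega^{\boldsymbol{y}}}=\boldsymbol{y}(\overline{\Omega})$, the inclusion $\subset$ is immediate because $\boldsymbol{y}(\overline{\Omega})$ is compact and contains $\Omega^{\boldsymbol{y}}$; for the converse, given $\boldsymbol{\xi}=\boldsymbol{y}(\boldsymbol{x}_0)\in \boldsymbol{y}(\overline{\Omega})$ and $\rho>0$, I will pick $\rho'>0$ with $\boldsymbol{y}(B_{\rho'}(\boldsymbol{x}_0)\cap \Omega)\subset B_\rho(\boldsymbol{\xi})$ by continuity, invoke Lusin $(N^{-1})$ (which holds because $\det\nabla\boldsymbol{y}>0$ a.e.) to ensure this image has positive measure, and conclude that it must meet $\Omega^{\boldsymbol{y}}$ since $\boldsymbol{y}(\Omega)\setminus \Omega^{\boldsymbol{y}}$ is null, giving $\boldsymbol{\xi}\in\overline{\Omega^{\boldsymbol{y}}}$. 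Finally, $\partial \Omega^{\boldsymbol{y}}=\overline{\Omega^{\boldsymbol{y}}}\setminus \Omega^{\boldsymbol{y}}=\boldsymbol{y}(\overline{\Omega})\setminus (\boldsymbol{y}(\Omega)\setminus \boldsymbol{y}(\partial\Omega))=\boldsymbol{y}(\partial\Omega)$ by direct set manipulation.
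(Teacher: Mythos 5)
Your proof is correct and follows essentially the same route as the paper: both identify $\Omega^{\boldsymbol{y}}$ with the nonvanishing-degree set and establish the reverse inclusion via the integral formula for the degree applied on a small ball where the degree is constant, combined with $\det\nabla\boldsymbol{y}>0$ a.e.\ and continuity at a preimage point. The only differences are cosmetic: for $\closure{\Omega^{\boldsymbol{y}}}=\boldsymbol{y}(\closure{\Omega})$ you work in the target with Lusin $(N^{-1})$ and the measure-zero difference, whereas the paper uses density of $\boldsymbol{y}^{-1}(\Omega^{\boldsymbol{y}})$ in $\Omega$ and a sequence argument, and for $\partial\Omega^{\boldsymbol{y}}$ you use a single set-theoretic chain where the paper shows two inclusions.
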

\begin{proof}
We claim that $\Omega^{\boldsymbol{y}}=\{\boldsymbol{\xi}\in \R^3 \setminus \boldsymbol{y}(\partial \Omega):\:\deg(\boldsymbol{y},\Omega,\boldsymbol{\xi})>0\}$. Once the claim is proved, we deduce that $\Omega^{\boldsymbol{y}}$ is open. Indeed, the set on the right-hand side is open  by the continuity of the degree. 

Let $\boldsymbol{\xi}_0\in \R^3 \setminus \boldsymbol{y}(\partial \Omega)$ be such that $\deg(\boldsymbol{y},\Omega,\boldsymbol{\xi}_0)>0$. Then, by the solvability property of the degree, $\boldsymbol{\xi}_0\in \boldsymbol{y}(\Omega)$ and, in turn, $\boldsymbol{\xi}_0\in \Omega^{\boldsymbol{y}}$. Conversely, let $\boldsymbol{\xi}_0\in \Omega^{\boldsymbol{y}}$. Denote by $V$ the connected component of $\R^3 \setminus \boldsymbol{y}(\partial \Omega)$ containing $\boldsymbol{\xi}_0$ and consider $R>0$ such that $B(\boldsymbol{\xi}_0,R)\subset \subset V$. Let $\psi \in C^\infty_c(\R^3)$ be such that $\psi\geq 0$, $\text{supp}\,\psi\, \subset\,\closure{B}(\boldsymbol{\xi}_0,R)\subset V$ and $\int_{\R^3} \psi\,\d \boldsymbol{\xi}=1$. Then, by the integral formula for the degree, we compute
\begin{equation*}
    \deg(\boldsymbol{y},\Omega,\boldsymbol{\xi})=\int_\Omega \psi \circ \boldsymbol{y}\,\det \nabla \boldsymbol{y}\,\d \boldsymbol{x}= \int_{\boldsymbol{y}^{-1}(B(\boldsymbol{\xi}_0,R))} \psi \circ \boldsymbol{y}\,\det \nabla \boldsymbol{y}\,\d \boldsymbol{x}.
\end{equation*}
As $\psi \circ \boldsymbol{y}>0$ on $\boldsymbol{y}^{-1}(B(\boldsymbol{\xi}_0,R))$ and $\det \nabla \boldsymbol{y}>0$ almost everywhere, we obtain $\deg(\boldsymbol{y},\Omega,\boldsymbol{\xi})>0$ and this proves the claim. 

By the Lusin property $(N)$, we have $\leb(\boldsymbol{y}(\Omega)\setminus \Omega^{\boldsymbol{y}})\leq \leb(\boldsymbol{y}(\partial \Omega))=0$. For simplicity, define $U\coloneqq \boldsymbol{y}^{-1}(\Omega^{\boldsymbol{y}})=\Omega \setminus \boldsymbol{y}^{-1}(\boldsymbol{y}(\partial \Omega))$. Then, $\Omega \setminus U=\boldsymbol{y}^{-1}(\boldsymbol{y}(\partial \Omega))$, so that $\leb(\Omega\setminus U)=0$ by the Lusin properties $(N)$ and $(N^{-1})$. In particular, $U$ is dense in $\Omega$.

We prove that $\closure{\Omega^{\boldsymbol{y}}}=\boldsymbol{y}(\closure{\Omega})$. As $\Omega^{\boldsymbol{y}}\subset \boldsymbol{y}(\Omega)$, we immediately have $\closure{\Omega^{\boldsymbol{y}}}\subset\closure{\boldsymbol{y}(\Omega)}=\boldsymbol{y}(\closure{\Omega})$. Let $\boldsymbol{\xi}\in \boldsymbol{y}(\closure{\Omega})$ and consider $\boldsymbol{x}\in \closure{\Omega}$ such that $\boldsymbol{y}(\boldsymbol{x})=\boldsymbol{\xi}$. By density, $\closure{U}=\closure{\Omega}$. Thus, there exists $(\boldsymbol{x}_n)\subset U$ such that $\boldsymbol{x}_n\to \boldsymbol{x}$ and, in turn, $\boldsymbol{\xi}_n\coloneqq\boldsymbol{y}(\boldsymbol{x}_n)\to \boldsymbol{\xi}$. As $(\boldsymbol{\xi}_n)\subset \Omega^{\boldsymbol{y}}$, this yields $\boldsymbol{\xi}\in \closure{\Omega^{\boldsymbol{y}}}$.

Finally, we prove that $\partial \Omega^{\boldsymbol{y}}=\boldsymbol{y}(\partial \Omega)$. This follows combining
\begin{equation*}
        \partial \Omega^{\boldsymbol{y}}=\closure{\Omega^{\boldsymbol{y}}}\setminus(\Omega^{\boldsymbol{y}})^\circ=\boldsymbol{y}(\closure{\Omega})\setminus \Omega^{\boldsymbol{y}}=(\boldsymbol{y}(\closure{\Omega})\setminus \boldsymbol{y}(\Omega)) \cup (\boldsymbol{y}(\closure{\Omega})\cap \boldsymbol{y}(\partial \Omega))\subset \boldsymbol{y}(\partial \Omega)  
\end{equation*}
and
\begin{equation*}
        \partial \Omega^{\boldsymbol{y}}=\closure{\Omega^{\boldsymbol{y}}}\cap \closure{\R^3 \setminus \Omega^{\boldsymbol{y}}}=\boldsymbol{y}(\closure{\Omega}) \cap \closure{(\R^3 \setminus \boldsymbol{y}(\Omega))\cup \boldsymbol{y}(\partial \Omega)} \supset \boldsymbol{y}(\closure{\Omega}) \cap \boldsymbol{y}(\partial \Omega)=\boldsymbol{y}(\partial \Omega).
\end{equation*}
\end{proof}

 The next example clarifies the difference between the sets $\boldsymbol{y}(\Omega)$ and $\Omega^{\boldsymbol{y}}$. 

\begin{example}[Ball's example]
\label{ex:ball}
The following is inspired by \cite[Example 1]{ball}. Let $\Omega\coloneqq(-1,1)^3$ and write $\Omega=\Omega^+ \cup P \cup \Omega^-$, where
\begin{equation*}
    \Omega^+\coloneqq (0,1)\times (-1,1)^2, \quad  P\coloneqq \{0\}\times (-1,1)^2, \quad \Omega^-\coloneqq (-1,0)\times (-1,1)^2.
\end{equation*}
Define  $\boldsymbol{y}\colon \Omega \to \R^3$ by  $\boldsymbol{y}(\boldsymbol{x})\coloneqq(x_1,x_2,|x_1|\,x_3)$, where $\boldsymbol{x}=(x_1,x_2,x_3)$. The corresponding deformed set is depicted in Figure \ref{fig:ball-ex}.
Then $\boldsymbol{y}\in W^{1,\infty}(\Omega;\R^3)$ and for every $\boldsymbol{x}\in \Omega \setminus P$ we have
\begin{equation*}
    \nabla \boldsymbol{y}(\boldsymbol{x})=
    \begin{pmatrix}
      1  & 0 & 0 \\
      0 & 1 & 0\\
      {x_1\,x_3}/{|x_1|} & 0 & |x_1|
    \end{pmatrix}.
\end{equation*}
In particular, $\det \nabla \boldsymbol{y}>0$ on $\Omega \setminus P$. We have $\boldsymbol{y}(\Omega^+)=V^+$, $\boldsymbol{y}(P)=S$ and $\boldsymbol{y}(\Omega^-)=V^-$, where, for $\boldsymbol{\xi}=(\xi_1,\xi_2,\xi_3)$, we set
\begin{equation*}
\begin{split}
    V^+&\coloneqq \{\boldsymbol{\xi}\in \R^3: 0<\xi_1<1,\:-1<\xi_2<1,\:|\xi_3|<\xi_1\},\\
    S&\coloneqq \{0\}\times (-1,1)\times \{0\},\\
    V^-&\coloneqq \{\boldsymbol{\xi}\in \R^3: -1<\xi_1<0,\:-1<\xi_2<1,\:|\xi_3|<-\xi_1\}.
\end{split}
\end{equation*}
Note that $\boldsymbol{y}\restr{\Omega \setminus P}$ is injective, but $\boldsymbol{y}$ is not a homeomorphism. Also, $\boldsymbol{y}(\Omega)=V^+ \cup S \cup V^-$ is not open. Instead, $\Omega^{\boldsymbol{y}}=V^+ \cup V^-$, since $S\subset\boldsymbol{y}\left (\closure{P}\cap \partial \Omega \right)$, and this set is open. Note also that, while $\boldsymbol{y}(\Omega)$ is necessarily connected, the deformed configuration $\Omega^{\boldsymbol{y}}$ is not.

\begin{figure}
    \begin{tikzpicture}
    \node[anchor=south west,inner sep=0] at (0,0)
    {\includegraphics[width=\textwidth]{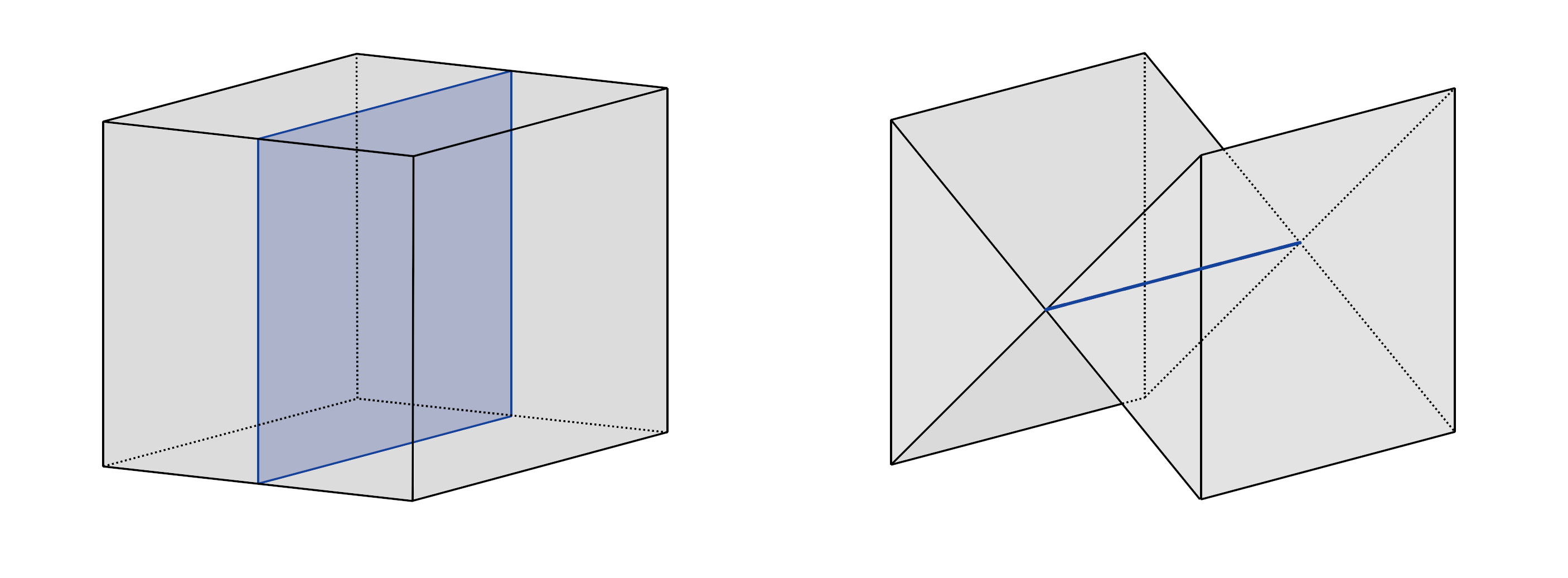}};
    \node[color=mblue] at (3,3) {$P$}; 
    \node at (2,4) {$\Omega^-$};
    \node at (6,4) {$\Omega^+$};
    \node at (10.2,4.3) {$V^-$};
    \node at (13.8,4.3) {$V^+$};
    \node[color=mblue] at (12.5,3.5) {$S$};
    \end{tikzpicture}
    \caption{The deformation in Example \ref{ex:ball}.}
    \label{fig:ball-ex}
\end{figure}

\end{example}

\begin{remark}[Topological image]
\label{rem:topological-image}
Let $\boldsymbol{y}\in W^{1,p}(\Omega;\R^3)$. The {topological image} of $\boldsymbol{y}$ is given by the set $\text{im}_{\rm T}(\boldsymbol{y},\Omega)\coloneqq \{\boldsymbol{\xi}\in \R^3 \setminus \boldsymbol{y}(\partial \Omega):\:\deg(\boldsymbol{y},\Omega,\boldsymbol{\xi})\neq 0\}$. Note that $\deg(\boldsymbol{y},\Omega,\boldsymbol{\xi})=0$ for every $\boldsymbol{\xi}\in \R^3 \setminus \boldsymbol{y}(\closure{\Omega})$, so that $\text{im}_{\rm T}(\boldsymbol{y},\Omega)\subset \boldsymbol{y}(\Omega)$. In  relation with the problem of invertibility of deformations in elasticity, the topological image was first considered in \cite{sverak} and then in several other contributions \cite{barchiesi.henao.moracorral,bouchala.hencl.molchanova,henao.moracorral,henao.moracorral2,henao.moracorral.oliva,mueller.qi.yan,mueller.spector,qi}. {\MMM In} Lemma \ref{lem:deformed-configuration}, we proved that, if $\det \nabla \boldsymbol{y}>0$ almost everywhere, then
\begin{equation*}
    \Omega^{\boldsymbol{y}}=\text{im}_{\rm T}(\boldsymbol{y},\Omega)=\{\boldsymbol{\xi}\in \R^3 \setminus \boldsymbol{y}(\partial \Omega):\:\deg(\boldsymbol{y},\Omega,\boldsymbol{\xi})> 0\}.
\end{equation*}
For more information about the topological properties of Sobolev maps with supercritical integrability, we refer to \cite{kromer}. 
\end{remark}

We now consider the invertibility of Sobolev maps with supercritical integrability. 
Let $\boldsymbol{y}\in W^{1,p}(\Omega;\R^3)$  with {\MMM $p>3$ be such that} $\det \nabla \boldsymbol{y}>0$ almost everywhere. Assume that $\boldsymbol{y}$ is {almost everywhere injective}, i.e.  there exists a  set $X \subset \Omega$ with $\leb(X)=0$ such that $\boldsymbol{y}\restr{\Omega \setminus X}$ is injective. In this case, we can consider the inverse  $\boldsymbol{y}\restr{\Omega\setminus X}^{-1}\colon \boldsymbol{y}(\Omega\setminus X)\to \Omega \setminus X$. Note that $\leb(\boldsymbol{y}(X))=0$ by the Lusin property $(N)$. We define the map $\boldsymbol{v} \colon \Omega^{\boldsymbol{y}} \to \R^3$ by setting
\begin{equation}
\label{eqn:inverse-deformation}
    \boldsymbol{v}(\boldsymbol{\xi})\coloneqq \begin{cases} \boldsymbol{y}\restr{\Omega\setminus X}^{-1}(\boldsymbol{\xi}), & \text{if $\boldsymbol{\xi}\in \Omega^{\boldsymbol{y}}\setminus \boldsymbol{y}(X)$,}\\ \boldsymbol{a}, & \text{if $\boldsymbol{\xi}\in \Omega^{\boldsymbol{y}}\cap \boldsymbol{y}(X)$,}
    \end{cases}
\end{equation}
where $\boldsymbol{a}\in \R^3$ is arbitrarily fixed. The map $\boldsymbol{v}$ satisfies $\boldsymbol{v}\circ \boldsymbol{y}=\boldsymbol{id}$ almost everywhere in $\Omega$ and $\boldsymbol{y}\circ \boldsymbol{v}=\boldsymbol{id}$ almost everywhere in $\Omega^{\boldsymbol{y}}$. Since $\boldsymbol{y}$  maps measurable sets to measurable sets, the measurability of $\boldsymbol{v}$ follows. As $\boldsymbol{y}$ has both Lusin properties $(N)$ and $(N^{-1})$, the map $\boldsymbol{v}$ has the same properties. Moreover, $\boldsymbol{v}\in L^\infty(\Omega^{\boldsymbol{y}};\R^3)$ since $\boldsymbol{v}(\Omega^{\boldsymbol{y}})\subset \Omega \cup \{\boldsymbol{a}\}$ and $\Omega$ is bounded.  

We remark that the definition of $\boldsymbol{v}$ in \eqref{eqn:inverse-deformation}  depends on the choice of the set $X$ where $\boldsymbol{y}$ is not injective and of the value $\boldsymbol{a}\in \R^3$. However, as $\boldsymbol{y}$ has the Lusin property $(N)$, its equivalence class is uniquely determined and coincides with the one of the classical inverse $\boldsymbol{y}^{-1}$, where the latter is defined out of a subset of $\boldsymbol{y}(\Omega)$ with zero Lebesgue measure. Hence, with a slight abuse of notation, we will denote this equivalence class of functions defined on $\Omega^{\boldsymbol{y}}$ by $\boldsymbol{y}^{-1}$ and we will refer to it as \emph{the} inverse of $\boldsymbol{y}$. 

\begin{remark}[Ciarlet-Ne\v{c}as condition]
\label{rem:ciaret-necas}
Let $\boldsymbol{y}\in W^{1,p}(\Omega;\R^3)$ be such that $\det \nabla \boldsymbol{y}>0$ almost everywhere. Then, $\boldsymbol{y}$ is almost everywhere injective if and only if it satisfies the {Ciarlet-Ne\v{c}as condition} \cite{ciarlet.necas}, which reads
\begin{equation*}
    \int_\Omega \det \nabla \boldsymbol{y}\,\d\boldsymbol{x}\leq \leb(\boldsymbol{y}(\Omega)).
\end{equation*}
This equivalence easily follows from the area formula \cite[p. 185]{ciarlet.necas}.
Note that the Ciarlet-Ne\v{c}as condition is preserved under weak convergence in $W^{1,p}(\Omega;\R^3)$ thanks to the weak continuity of {\MMM Jacobian} minors and the Morrey embedding.
As a consequence, given $(\boldsymbol{y}_n)\subset W^{1,p}(\Omega;\R^3)$ such that each $\boldsymbol{y}_n$ is almost everywhere injective with $\det \nabla \boldsymbol{y}_n>0$ almost everywhere, if $\boldsymbol{y}_n\wk\boldsymbol{y}$ in $W^{1,p}(\Omega;\R^3)$ for some $\boldsymbol{y}\in W^{1,p}(\Omega;\R^3)$ with $\det \nabla \boldsymbol{y}>0$ almost everywhere, then $\boldsymbol{y}$ is almost everywhere injective. Note that the condition $\det \nabla \boldsymbol{y}>0$ almost everywhere has to be assumed a priori.
\end{remark}

The inverse $\boldsymbol{y}^{-1}$ of $\boldsymbol{y}$ turns out to have Sobolev regularity. Note that this makes sense since, by definition, $\boldsymbol{y}^{-1}$ is defined on the deformed configuration $\Omega^{\boldsymbol{y}}$, which is open by Lemma \ref{lem:deformed-configuration}.
The Sobolev regularity of the inverse has been proved for more general classes of deformations, such as in \cite[Proposition 5.3]{barchiesi.henao.moracorral}, \cite[Theorem 9.3]{henao.moracorral.oliva},  \cite[Theorem 4.6]{qi},  and \cite[Theorem 8]{sverak}. For convenience of the reader, we recall the proof. Note that, here, {\MMM we do not impose any regularity on the boundary}. 

\begin{proposition}[Global invertibility]
\label{prop:global-invertibility}
Let $\boldsymbol{y}\in W^{1,p}(\Omega;\R^3)$ be almost everywhere injective with  $\det \nabla \boldsymbol{y}>0$ almost everywhere.  Then, $\boldsymbol{y}^{-1}\in W^{1,1}(\Omega^{\boldsymbol{y}};\R^3)$ with $\nabla \boldsymbol{y}^{-1}=(\nabla \boldsymbol{y})^{-1}\circ \boldsymbol{y}^{-1}$ almost everywhere in $\Omega^{\boldsymbol{y}}$. Moreover,   $\cof\,\nabla \boldsymbol{y}^{-1}\in L^1(\Omega^{\boldsymbol{y}};\rtt)$ and $\det \nabla \boldsymbol{y}^{-1}\in L^1(\Omega^{\boldsymbol{y}})$. 
\end{proposition}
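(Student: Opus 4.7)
My plan is to write down the explicit candidate $\boldsymbol{G}\coloneqq (\nabla \boldsymbol{y})^{-1}\circ \boldsymbol{y}^{-1}$ for $\nabla \boldsymbol{y}^{-1}$ on $\Omega^{\boldsymbol{y}}$, verify its $L^1$-integrability, and then check by duality that it really is the distributional gradient of $\boldsymbol{y}^{-1}$. Since $\boldsymbol{y}$ has both Lusin properties $(N)$ and $(N^{-1})$ and $\det \nabla \boldsymbol{y}>0$ a.e., the area formula applies in both directions. Using the identity $(\nabla \boldsymbol{y})^{-1}\det \nabla \boldsymbol{y}=(\cof \nabla \boldsymbol{y})^T$, a change of variables gives
\begin{equation*}
\int_{\Omega^{\boldsymbol{y}}}|\boldsymbol{G}|\,\d\boldsymbol{\xi}=\int_\Omega |(\nabla \boldsymbol{y})^{-1}|\det \nabla \boldsymbol{y}\,\d \boldsymbol{x}=\int_\Omega |\cof \nabla \boldsymbol{y}|\,\d \boldsymbol{x},
\end{equation*}
which is finite because $\cof \nabla \boldsymbol{y}\in L^{p/2}(\Omega;\rtt)$ with $p/2>1$. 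Combined with $\boldsymbol{y}^{-1}\in L^\infty(\Omega^{\boldsymbol{y}};\R^3)$ and $\leb(\Omega^{\boldsymbol{y}})<\infty$, this gives the required $L^1$ data.

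For the identification step, I would fix $\varphi\in C_c^\infty(\Omega^{\boldsymbol{y}})$ and an index $i$. By the area formula,
\begin{equation*}
\int_{\Omega^{\boldsymbol{y}}}y^{-1}_l\,\partial_i \varphi\,\d\boldsymbol{\xi}=\int_\Omega x_l\,(\partial_i \varphi)\circ \boldsymbol{y}\,\det \nabla \boldsymbol{y}\,\d \boldsymbol{x}.
\end{equation*}
The central tool is the weak Piola identity
\begin{equation*}
\det \nabla \boldsymbol{y}\,(\partial_i \varphi)\circ \boldsymbol{y}=\sum_{j}\partial_j \bigl[(\cof \nabla \boldsymbol{y})_{ij}\,\varphi\circ \boldsymbol{y}\bigr]\quad \text{in } \mathcal{D}'(\Omega),
\end{equation*}
which follows by combining the Sobolev chain rule $\nabla(\varphi\circ \boldsymbol{y})=(\nabla \varphi)\circ \boldsymbol{y}\cdot \nabla \boldsymbol{y}$ with the row-wise identity $\div \cof \nabla \boldsymbol{y}=0$ (classical for $p>3$). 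By Lemma \ref{lem:deformed-configuration}, $\partial \Omega^{\boldsymbol{y}}=\boldsymbol{y}(\partial \Omega)$, so the $\boldsymbol{y}$-preimage of $\mathrm{supp}\,\varphi$ is a closed subset of $\closure{\Omega}$ disjoint from $\partial \Omega$, hence compactly contained in $\Omega$. This legitimizes integration by parts against the smooth test function $x_l$, and one further change of variables yields
\begin{equation*}
\int_{\Omega^{\boldsymbol{y}}}y^{-1}_l\,\partial_i \varphi\,\d\boldsymbol{\xi}=-\int_{\Omega^{\boldsymbol{y}}}\Bigl[\frac{(\cof \nabla \boldsymbol{y})_{il}}{\det \nabla \boldsymbol{y}}\Bigr]\circ \boldsymbol{y}^{-1}\,\varphi\,\d\boldsymbol{\xi}=-\int_{\Omega^{\boldsymbol{y}}}\boldsymbol{G}_{li}\,\varphi\,\d\boldsymbol{\xi},
\end{equation*}
which gives the formula $\nabla \boldsymbol{y}^{-1}=(\nabla \boldsymbol{y})^{-1}\circ \boldsymbol{y}^{-1}$ and hence $\boldsymbol{y}^{-1}\in W^{1,1}(\Omega^{\boldsymbol{y}};\R^3)$.

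The remaining integrability claims follow from the algebraic identities $\cof(A^{-1})=A^T/\det A$ and $\det(A^{-1})=1/\det A$ valid for invertible $A\in \rtt$: these give $\cof \nabla \boldsymbol{y}^{-1}=((\nabla \boldsymbol{y})^T/\det \nabla \boldsymbol{y})\circ \boldsymbol{y}^{-1}$ and $\det \nabla \boldsymbol{y}^{-1}=(1/\det \nabla \boldsymbol{y})\circ \boldsymbol{y}^{-1}$, whence a final application of the area formula yields
\begin{equation*}
\int_{\Omega^{\boldsymbol{y}}}|\cof \nabla \boldsymbol{y}^{-1}|\,\d\boldsymbol{\xi}=\int_\Omega |\nabla \boldsymbol{y}|\,\d\boldsymbol{x}<\infty,\qquad \int_{\Omega^{\boldsymbol{y}}}\det \nabla \boldsymbol{y}^{-1}\,\d\boldsymbol{\xi}=\leb(\Omega)<\infty.
\end{equation*}
I expect the main obstacle to be the weak Piola identity together with the compact-support check for $\varphi\circ \boldsymbol{y}$ in $\Omega$, which crucially relies on the description of $\partial \Omega^{\boldsymbol{y}}$ from Lemma \ref{lem:deformed-configuration}; once these are in place, the rest is a bookkeeping exercise with the area formula.
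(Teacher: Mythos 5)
Your proposal is correct and follows essentially the same route as the paper: both identify the candidate gradient via the distributional Piola identity $\div\,\cof\,\nabla\boldsymbol{y}=0$, exploit the fact (via Lemma~\ref{lem:deformed-configuration}) that test functions compactly supported in $\Omega^{\boldsymbol{y}}$ pull back to compactly supported functions in $\Omega$, and then use the area formula to establish the $L^1$ bounds on $\nabla\boldsymbol{y}^{-1}$, $\cof\,\nabla\boldsymbol{y}^{-1}$, and $\det\nabla\boldsymbol{y}^{-1}$. The only cosmetic difference is that the paper packages the Piola-plus-chain-rule step into the intermediate identity~\eqref{eqn:div-identity} and tests directly against matrix-valued $\boldsymbol{\Phi}$, whereas you keep the component-wise form, but the computation is the same.
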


\begin{proof}
By the Piola identity, we have
\begin{equation}
    \label{eqn:piola}
    \int_\Omega \cof\,\nabla \boldsymbol{y}:\nabla \boldsymbol{\zeta}\,\d\boldsymbol{x}=0
\end{equation}
for every $\boldsymbol{\zeta}\in C^\infty_c(\Omega;\R^3)$. {\MMM As $\cof \,\nabla \boldsymbol{y}\in L^{{p}/{2}}(\Omega;\rtt)$ and $p/2>p/(p-1)=p'$ since $p>3$,} by density, this actually holds for $\boldsymbol{\zeta}\in W^{1,p}_0(\Omega;\R^3)$. Let $\varphi \in C^\infty(\closure{\Omega})$ and $\boldsymbol{\psi}\in C^\infty_c(\Omega^{\boldsymbol{y}};\R^3)$. Choosing $\boldsymbol{\zeta} = \varphi\,\boldsymbol{\psi}\circ \boldsymbol{y}$ in \eqref{eqn:piola}, after some algebraic manipulations, we obtain the following identity:
\begin{equation}
    \label{eqn:div-identity}
    -\int_\Omega \varphi\,\div \boldsymbol{\psi}\circ \boldsymbol{y}\,\det \nabla \boldsymbol{y}\,\d\boldsymbol{x}=\int_\Omega \boldsymbol{\psi}\circ \boldsymbol{y} \otimes \nabla \varphi : {\rm cof}\,\nabla \boldsymbol{y}\,\d\boldsymbol{x}.
\end{equation}

Let $X \subset \Omega$ with $\leb(X)=0$ be such that $\boldsymbol{y}\restr{\Omega\setminus X}$ is injective. For clarity, let us consider the representative $\boldsymbol{v}$ of $\boldsymbol{y}^{-1}$ in \eqref{eqn:inverse-deformation} and let us fix a representative of $\nabla \boldsymbol{y}$. Set 
$D \coloneqq \Omega \setminus (\boldsymbol{y}^{-1}(\boldsymbol{y}(\partial \Omega))\cup \{\det \nabla \boldsymbol{y}\leq 0\} \cup X)$, so that $\boldsymbol{v}=\boldsymbol{y}\restr{D}^{-1}$ on $\boldsymbol{y}(D)$ and $\nabla \boldsymbol{y}$ is invertible on $D$.
Let $\boldsymbol{\Phi}\in C^\infty_c(\Omega^{\boldsymbol{y}};\rtt)$ and denote its rows by $\boldsymbol{\Phi}^i=(\Phi^i_1,\Phi^i_2,\Phi^i_3)^\top$, where $i=1,2,3$. Using the change-of-variable formula, we compute
\begin{equation*}
    \begin{split}
        - \int_{\Omega^{\boldsymbol{y}}} \boldsymbol{v}\cdot \div \boldsymbol{\Phi}\,\d\boldsymbol{\xi}&=-\int_{\boldsymbol{y}(D)} \boldsymbol{y}\restr{D}^{-1}\cdot \div \boldsymbol{\Phi} \,\d\boldsymbol{\xi}=-\int_D \boldsymbol{x}\cdot \div \boldsymbol{\Phi} \circ \boldsymbol{y}\,\det \nabla \boldsymbol{y}\,\d\boldsymbol{x}\\
        &=-\int_\Omega \boldsymbol{x}\cdot \div \boldsymbol{\Phi} \circ \boldsymbol{y}\,\det \nabla \boldsymbol{y}\,\d\boldsymbol{x}=-\sum_{i=1}^3 \int_\Omega x_i\,\div \boldsymbol{\Phi}^i \circ \boldsymbol{y}\,\det \nabla \boldsymbol{y}\,\d\boldsymbol{x}.
    \end{split}
\end{equation*}
Then using \eqref{eqn:div-identity} with $\varphi(\boldsymbol{x})=x_i$ for every $\boldsymbol{x}\in \Omega$ and $\boldsymbol{\psi}(\boldsymbol{\xi})=\boldsymbol{\Phi}^i(\boldsymbol{\xi})$ for every $\boldsymbol{\xi}\in \Omega^{\boldsymbol{y}}$, we obtain 
\begin{equation*}
    \begin{split}
        - \int_{\Omega^{\boldsymbol{y}}} \boldsymbol{v}\cdot \div \boldsymbol{\Phi}\,\d\boldsymbol{\xi}
        &=\sum_{i,j=1}^3 \int_\Omega \Phi^i_j \circ \boldsymbol{y}\,(\cof\,\nabla \boldsymbol{y})^j_i\,\d\boldsymbol{x}=\sum_{i,j=1}^3 \int_\Omega \Phi^i_j \circ \boldsymbol{y}\,(\adj\,\nabla \boldsymbol{y})^i_j\,\d\boldsymbol{x}\\
        &=\int_\Omega \boldsymbol{\Phi}\circ \boldsymbol{y}:\adj\,\nabla \boldsymbol{y}\,\d\boldsymbol{x}=\int_\Omega \boldsymbol{\Phi}\circ \boldsymbol{y}:(\nabla \boldsymbol{y})^{-1}\,\det \nabla \boldsymbol{y}\,\d\boldsymbol{x}\\
        &=\int_D \boldsymbol{\Phi}\circ \boldsymbol{y}:(\nabla \boldsymbol{y})^{-1}\,\det \nabla \boldsymbol{y}\,\d\boldsymbol{x}  =\int_{\boldsymbol{y}(D)} \boldsymbol{\Phi}:(\nabla \boldsymbol{y})^{-1}\circ \boldsymbol{y}\restr{D}^{-1}\,\d \boldsymbol{\xi},
    \end{split}
\end{equation*}
where, in the last line, we used again the change-of-variable formula. Hence, as $\leb(\Omega^{\boldsymbol{y}}\setminus \boldsymbol{y}(D))=0$, we deduce that $\boldsymbol{v}$ admits a weak gradient with a representative given by  
\begin{equation*}
    \nabla\boldsymbol{v}(\boldsymbol{\xi})\coloneqq 
    \begin{cases} 
    (\nabla \boldsymbol{y})^{-1}\circ \boldsymbol{y}\restr{D}^{-1}(\boldsymbol{\xi}) & \text{if $\boldsymbol{\xi}\in \boldsymbol{y}(D)$,}\\
    \boldsymbol{A} & \text{if $\boldsymbol{\xi}\in \Omega^{\boldsymbol{y}}\setminus \boldsymbol{y}(D)$,}
    \end{cases}
\end{equation*}
where $\boldsymbol{A}\in \rtt$ is arbitrary.  Thanks to the Lusin property $(N)$, the equivalence class of $\nabla \boldsymbol{v}$ is uniquely determined. Moreover, it belongs to $L^1(\Omega^{\boldsymbol{y}};\rtt)$. Indeed, by the change-of-variable formula
\begin{equation*}
    \begin{split}
        \int_{\Omega^{\boldsymbol{y}}} |\nabla \boldsymbol{v}|\,\d\boldsymbol{\xi}&=\int_{\boldsymbol{y}(D)} |(\nabla \boldsymbol{y})^{-1}|\circ \boldsymbol{y}\restr{D}^{-1}\,\d\boldsymbol{\xi}   =\int_D |(\nabla \boldsymbol{y})^{-1}|\det \nabla \boldsymbol{y}\,\d\boldsymbol{x}\\
        &=\int_D |\adj\,\nabla\boldsymbol{y}|\,\d\boldsymbol{x}
        =\int_\Omega |\adj\,\nabla\boldsymbol{y}|\,\d\boldsymbol{x}.
    \end{split}
\end{equation*}
Thus, $\boldsymbol{v}\in W^{1,1}(\Omega^{\boldsymbol{y}};\R^3)$. Similarly, using the identity $\adj\,(\boldsymbol{F}^{-1})=(\det \boldsymbol{F})^{-1}\boldsymbol{F}$ for every $\boldsymbol{F}\in \rtt_+$, we compute
\begin{equation*}
    \begin{split}
        \int_{\Omega^{\boldsymbol{y}}}|\adj\,\nabla \boldsymbol{v}|\,\d\boldsymbol{\xi}&=\int_{\boldsymbol{y}(D)} (\det \nabla \boldsymbol{y})^{-1}\circ \boldsymbol{y} \restr{D}^{-1} \,|\nabla \boldsymbol{y}|\circ \boldsymbol{y}\restr{D}^{-1} \,\d \boldsymbol{\xi}=\int_D |\nabla \boldsymbol{y}|\,\d\boldsymbol{x}=\int_{\Omega} |\nabla \boldsymbol{y}|\,\d\boldsymbol{x},
    \end{split}
\end{equation*}
while, using the identity $\det (\boldsymbol{F}^{-1})=(\det \boldsymbol{F})^{-1}$ in $\boldsymbol{F}\in \rtt_+$, we obtain
\begin{equation*}
    \int_{\Omega^{\boldsymbol{y}}} \det \nabla \boldsymbol{v}\,\d\boldsymbol{x}=\int_{\boldsymbol{y}(D)} (\det \nabla \boldsymbol{y})^{-1}\circ \boldsymbol{y}\restr{D}^{-1}\,\d\boldsymbol{\xi}=\leb(D)=\leb(\Omega).
\end{equation*}
Therefore, $\cof\,\nabla \boldsymbol{v}\in L^1(\Omega^{\boldsymbol{y}};\rtt)$ and $\det \nabla \boldsymbol{v}\in L^1(\Omega^{\boldsymbol{y}})$.
\end{proof}

\begin{remark}[Area formula for the inverse]
\label{rem:area-formula-inverse}
Let $\boldsymbol{y}\in W^{1,p}(\Omega;\R^3)$ be almost everywhere injective with $\det \nabla \boldsymbol{y}>0$ almost everywhere. Let $X \subset \Omega$ with $\leb(X)=0$ be such that $\boldsymbol{y}\restr{\Omega \setminus X}$ is injective and let
$\boldsymbol{v}$ be the representative of $\boldsymbol{y}^{-1}$ in \eqref{eqn:inverse-deformation}. By Proposition \ref{prop:global-invertibility},  $\boldsymbol{v}\in W^{1,1}(\Omega^{\boldsymbol{y}};\R^3)$. Since $\boldsymbol{y}$ has the Lusin property $(N^{-1})$, the map $\boldsymbol{v}$ has the Lusin property $(N)$. Moreover, $\boldsymbol{v}$ is almost everywhere injective. Thus, we can use the area formula to estimate the measure of preimages of sets via $\boldsymbol{y}$ {\MMM as follows}. {\MMM For every $F\subset \R^3$, we write $\boldsymbol{y}^{-1}(F)\coloneqq\{\boldsymbol{x}\in \Omega:\:\boldsymbol{y}(\boldsymbol{x})\in F\}$.
Let {\MMM $F$} be measurable. By \eqref{eqn:inverse-deformation}, we have $\boldsymbol{v}(F)=\boldsymbol{y}^{-1}(F\setminus(\boldsymbol{y}(\partial \Omega)\cup\boldsymbol{y}(X)))$. Then, exploiting both Lusin properties $(N)$ and $(N^{-1})$ of $\boldsymbol{y}$ and} applying the area formula \cite[Theorem 2]{hajlasz}, we compute
\begin{equation*}
    \leb(\boldsymbol{y}^{-1}(F))=\leb(\boldsymbol{v}(F))=\int_F \det \nabla \boldsymbol{v}\,\d\boldsymbol{\xi}.
\end{equation*}
\end{remark}

\section{Static Setting}
\label{sec:static}
\MMM
\subsection{The mathematical model}
\label{subs:mathematical-model}
\EEE
Let $\Omega \subset \R^3$ be a bounded Lipschitz domain. For $p>3$ fixed, the class of admissible deformations is given by
\begin{equation}
    \label{eqn:admissible-deformations}
    \mathcal{Y}\coloneqq \left \{ \boldsymbol{y}\in W^{1,p}(\Omega;\R^3):\:\det \nabla \boldsymbol{y}>0\:\text{a.e.},\:\text{$\boldsymbol{y}$ a.e. injective},\:\boldsymbol{y}=\overline{\boldsymbol{y}}\:\:{ \text{ on}}\:\Gamma  \right \},
\end{equation}
where $\Gamma \subset \partial \Omega$ {\MMM is $\haus$-measurable} with $\haus(\Gamma)>0$ and $\overline{\boldsymbol{y}}\in C^0(\Gamma;\R^3)$ {\MMM \st{are given}}.

\begin{example}
\label{ex:ball2}
Let $\Omega$  and $\boldsymbol{y}$ be as in Example \ref{ex:ball}. Given  $\Gamma\coloneqq \{-1,1\}\times (-1,1)^2$ and $\overline{\boldsymbol{y}}\coloneqq \boldsymbol{id}$, we have  $\boldsymbol{y}\in \mathcal{Y}$. In particular, this is a case in which $\mathcal{Y}\neq \emptyset$. 
\end{example}

Henceforth, we identify each $\boldsymbol{y}\in \mathcal{Y}$  with its continuous representative and we set $\Omega^{\boldsymbol{y}}\coloneqq \boldsymbol{y}(\Omega)\setminus \boldsymbol{y}(\partial \Omega)$. Then, admissible magnetizations are given by maps $\boldsymbol{m}\in W^{1,2}(\Omega^{\boldsymbol{y}};\S^2)$. Note that this makes sense as $\Omega^{\boldsymbol{y}}$ is open by Lemma \ref{lem:deformed-configuration}. Thus, the class of admissible states is defined as
\begin{equation}
    \label{eqn:admissible-states}
    \mathcal{Q}\coloneqq \left \{(\boldsymbol{y},\boldsymbol{m})\, :\boldsymbol{y}\in \mathcal{Y},\: \boldsymbol{m}\in W^{1,2}(\Omega^{\boldsymbol{y}};\S^2) \right \}.
\end{equation}
We endow the set $\mathcal{Q}$ with the topology that makes the map $\boldsymbol{q}=(\boldsymbol{y},\boldsymbol{m})\mapsto (\boldsymbol{y},\chi_{\Omega^{\boldsymbol{y}}}\boldsymbol{m},\chi_{\Omega^{\boldsymbol{y}}}\nabla\boldsymbol{m})$ from $\mathcal{Q}$ to $W^{1,p}(\Omega;\R^3)\times L^2(\R^3;\R^3)\times L^2(\R^3;\rtt)$ a homeomorphism onto its image, where the latter space is equipped with the weak product topology. Hence $\boldsymbol{q}_n\to \boldsymbol{q}$ in $\mathcal{Q}$ if and only if the following convergences hold:
\begin{equation}
    \label{eqn:Q-convergence-y}
    \text{$\boldsymbol{y}_n \wk \boldsymbol{y}$ in $W^{1,p}(\Omega;\R^3)$,}
\end{equation}
\begin{equation}
    \label{eqn:Q-convergence-m}
    \text{$\chi_{\Omega^{\boldsymbol{y}_n}}\boldsymbol{m}_n \wk \chi_{\Omega^{\boldsymbol{y}}}\boldsymbol{m}$ in $L^2(\R^3;\R^3)$,}
\end{equation}
\begin{equation}
    \label{eqn:Q-convergence-nabla-m}
    \text{$\chi_{\Omega^{\boldsymbol{y}_n}}\nabla\boldsymbol{m}_n \wk \chi_{\Omega^{\boldsymbol{y}}}\nabla\boldsymbol{m}$ in $L^2(\R^3;\rtt)$.}
\end{equation}
In this case, up to subsequences, we actually have
\begin{equation}
    \label{eqn:Q-convergence-strong}
    \text{$\chi_{\Omega^{\boldsymbol{y}_n}}\boldsymbol{m}_n \to \chi_{\Omega^{\boldsymbol{y}}}\boldsymbol{m}$ in $L^a(\R^3;\R^3)$ for every $1 \leq a < \infty$.}
\end{equation} 

The energy functional $E \colon \mathcal{Q}\to \R$ is defined, for $\boldsymbol{q}=(\boldsymbol{y},\boldsymbol{m})\in\mathcal{Q}$, by setting
\begin{equation}
    \label{eqn:energy-E}
    E(\boldsymbol{q})\coloneqq \int_\Omega W(\nabla\boldsymbol{y},\boldsymbol{m}\circ \boldsymbol{y})\,\d\boldsymbol{x}+ \alpha \int_{\Omega^{\boldsymbol{y}}}|\nabla \boldsymbol{m}|^2\,\d \boldsymbol{\xi}+\frac{\mu_0}{2}\int_{\R^3}|\nabla \zeta_{\boldsymbol{m}}|^2\,\d\boldsymbol{\xi}. 
\end{equation}
The first term represents the {elastic energy} of the system. Note that, as $\boldsymbol{y}$ satisfies the Lusin property $(N^{-1})$, the composition $\boldsymbol{m}\circ \boldsymbol{y}$ is measurable and its equivalence class does not depend on the choice of the representative of $\boldsymbol{m}$. The nonlinear elastic energy density $W \colon \rtt_+ \times \S^2 \to [0,+\infty)$ is continuous and satisfies the following two assumptions:
\begin{align}
    \nonumber \textbf{(coercivity)}& \hspace{3mm}\text{there exist a constant $K>0$ and a Borel function $\gamma \colon [0,+\infty) \to [0,+\infty)$ satisfying}\\
    \nonumber & \hspace{3mm}\text{$\lim_{h \to 0^+}\gamma(h)= +\infty$ such that}\\
    & \hspace{32mm}{\displaystyle W(\boldsymbol{F},\boldsymbol{\lambda})\geq K |\boldsymbol{F}|^p+\gamma(\det \boldsymbol{F})} \label{eqn:W-coercivity}\\
    \nonumber & \hspace{3mm}\text{for every $\boldsymbol{F}\in \rtt_+$ and $\boldsymbol{\lambda}\in \S^2$};\\
    \nonumber \textbf{(polyconvexity)}& \hspace{3mm}\text{there exists a function $\widehat{W}\colon \rtt_+ \times \rtt_+ \times \R_+ \times \S^2 \to [0,+\infty)$ such that $\widehat{W}(\cdot,\cdot,\cdot,\boldsymbol{\lambda})$}\\
    \nonumber & \hspace{3mm}\text{is convex for every $\boldsymbol{\lambda}\in \S^2$ and there holds}\\
    & \hspace{30mm}\text{$W(\boldsymbol{F},\boldsymbol{\lambda})=\widehat{W}(\boldsymbol{F},\cof\,\boldsymbol{F},\det \boldsymbol{F},\boldsymbol{\lambda})$}\label{eqn:W-polyconvex}\\
    \nonumber & \hspace{3mm}\text{for every $\boldsymbol{F}\in \rtt_+$ and $\boldsymbol{\lambda}\in \S^2$.}
\end{align}
{\MMM Another standard assumption on the elastic energy density is the one of {frame-indifference}, which reads
\begin{equation*}
    \forall\,\boldsymbol{F}\in\rtt_+,\:\forall\,\boldsymbol{\lambda}\in\S^2,\:\forall\,\boldsymbol{Q}\in SO(3),\quad W(\boldsymbol{Q}\boldsymbol{F},\boldsymbol{Q}\boldsymbol{\lambda})=W(\boldsymbol{F},\boldsymbol{\lambda}).
\end{equation*}
This assumption is crucial from the physical point of view, as it ensures the objectivity of the magnetoelastic energy. However, this requirement will play no role in our analysis.}

The second term in \eqref{eqn:energy-E} is the {exchange energy} and comprises the parameter $\alpha>0$. The third term is called {magnetostatic energy} and involves the function $\zeta_{\boldsymbol{m}}\colon \R^3 \to \R$ which is a weak solution of the magnetostatic Maxwell equation:
\begin{equation}
    \Delta \zeta_{\boldsymbol{m}}=\div(\chi_{\Omega^{\boldsymbol{y}}}\boldsymbol{m})\:\text{in}\:\R^3.
\end{equation}
This means that $\zeta_{\boldsymbol{m}}$ belongs to the homogeneous Sobolev space
\begin{equation*}
    V^{1,2}(\R^3)\coloneqq \{\varphi \in L^2_\loc(\R^3):\:\nabla \varphi \in L^2(\R^3;\R^3)\}
\end{equation*}
and satisfies the following:
\begin{equation*}
    \forall\, \varphi \in V^{1,2}(\R^3),\quad \int_{\R^3} \nabla \zeta_{\boldsymbol{m}}\cdot \nabla \varphi\,\d\boldsymbol{\xi}=\int_{\R^3} \chi_{\Omega^{\boldsymbol{y}}}\boldsymbol{m} \cdot \nabla \varphi\,\d\boldsymbol{\xi}.
\end{equation*}
Note that such weak solutions exist and are unique up to additive constants \cite[Proposition 8.8]{barchiesi.henao.moracorral}, so that their gradient is uniquely defined. 

\subsection{Compactness and existence of minimizers}
\label{subs:compactness-existence-minimizers}
The main result of this section is the existence of minimizers of the energy $E$ in \eqref{eqn:energy-E}. Recall the definition of the class of admissible states in \eqref{eqn:admissible-deformations}--\eqref{eqn:admissible-states}.

\begin{theorem}[Existence of minimizers]
\label{thm:existence-minimizers}
Assume $p>3$ and $\mathcal{Y}\neq \emptyset$. Suppose that $W$ is continuous and satisfies \eqref{eqn:W-coercivity}--\eqref{eqn:W-polyconvex}. Then, the functional $E$ admits a minimizer in $\mathcal{Q}$.
\end{theorem}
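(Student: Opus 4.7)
The plan is to execute the direct method of the calculus of variations. Fix a minimizing sequence $(\boldsymbol{q}_n) = (\boldsymbol{y}_n, \boldsymbol{m}_n)\subset \mathcal{Q}$ for $E$. The first subtlety is that the Dzyaloshinskii--Moriya term is not sign-definite, so I would begin by absorbing it into the exchange energy via Young's inequality: using $|\boldsymbol{m}_n|\equiv 1$ and $|\curl \boldsymbol{m}_n|\leq C|\nabla \boldsymbol{m}_n|$, for any $\varepsilon>0$,
\begin{equation*}
\left|\kappa \int_{\Omega^{\boldsymbol{y}_n}} \curl \boldsymbol{m}_n \cdot \boldsymbol{m}_n\, \d \boldsymbol{\xi}\right| \leq \varepsilon \int_{\Omega^{\boldsymbol{y}_n}} |\nabla \boldsymbol{m}_n|^2\, \d\boldsymbol{\xi} + C_\varepsilon |\Omega^{\boldsymbol{y}_n}|,
\end{equation*}
and $|\Omega^{\boldsymbol{y}_n}|\leq \int_\Omega \det \nabla \boldsymbol{y}_n\, \d\boldsymbol{x} \leq C\|\nabla \boldsymbol{y}_n\|_{L^p}^3$ by Hadamard's inequality. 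Combined with \eqref{eqn:W-coercivity}, the Dirichlet boundary condition on $\Gamma$, and the Poincaré inequality, choosing $\varepsilon<\alpha$ yields uniform bounds on $\|\boldsymbol{y}_n\|_{W^{1,p}(\Omega)}$, on $\|\nabla \boldsymbol{m}_n\|_{L^2(\Omega^{\boldsymbol{y}_n})}$, and, since $\|\nabla \zeta_{\boldsymbol{m}_n}\|_{L^2(\R^3)}\leq\|\chi_{\Omega^{\boldsymbol{y}_n}}\boldsymbol{m}_n\|_{L^2(\R^3)}$ by testing the Maxwell equation against $\zeta_{\boldsymbol{m}_n}$, on $\|\nabla\zeta_{\boldsymbol{m}_n}\|_{L^2(\R^3)}$ as well.

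Next I would pass to subsequential limits. By Morrey embedding ($p > 3$), $\boldsymbol{y}_n \wk \boldsymbol{y}$ in $W^{1,p}(\Omega;\R^3)$ and uniformly on $\closure{\Omega}$, so the boundary condition on $\Gamma$ is preserved, and by Remark \ref{rem:ciaret-necas} almost-everywhere injectivity of $\boldsymbol{y}$ is guaranteed via passage to the limit in the Ciarlet--Nečas inequality. Positivity $\det \nabla \boldsymbol{y} > 0$ almost everywhere follows from the coercivity term $\gamma(\det\nabla\boldsymbol{y}_n)$: Mazur's lemma yields a convex combination converging to $\det \nabla \boldsymbol{y}$ a.e., and then Fatou together with $\gamma(h)\to +\infty$ as $h \to 0^+$ forces $\det \nabla \boldsymbol{y} > 0$ a.e., hence $\boldsymbol{y}\in \mathcal{Y}$. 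For the magnetizations, Proposition~\ref{prop:compactness} (announced in the introduction) provides, up to a further subsequence, an $\boldsymbol{m}\in W^{1,2}(\Omega^{\boldsymbol{y}};\S^2)$ for which \eqref{eqn:Q-convergence-y}--\eqref{eqn:Q-convergence-nabla-m} hold, together with $\chi_{\Omega^{\boldsymbol{y}_n}}\boldsymbol{m}_n \to \chi_{\Omega^{\boldsymbol{y}}}\boldsymbol{m}$ strongly in $L^a(\R^3;\R^3)$ for every $1\leq a<\infty$, and the key convergence $\boldsymbol{m}_n\circ \boldsymbol{y}_n \to \boldsymbol{m}\circ \boldsymbol{y}$ almost everywhere in $\Omega$.

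With these convergences, lower semicontinuity is established term by term. The exchange and magnetostatic energies are quadratic and lower semicontinuous under the weak $L^2$-convergences just produced, the latter by the linear dependence of $\nabla\zeta_{\boldsymbol{m}}$ on $\chi_{\Omega^{\boldsymbol{y}}}\boldsymbol{m}$ through the Maxwell equation. The DMI term is actually continuous along the extracted subsequence, because the strong $L^2$-convergence of $\chi_{\Omega^{\boldsymbol{y}_n}} \boldsymbol{m}_n$ pairs with the weak $L^2$-convergence of $\chi_{\Omega^{\boldsymbol{y}_n}} \nabla \boldsymbol{m}_n$ in the bilinear form. The magnetoelastic term is the one that uses polyconvexity: writing $W(\nabla \boldsymbol{y}, \boldsymbol{m}\circ \boldsymbol{y}) = \widehat{W}(\nabla \boldsymbol{y},\cof \nabla \boldsymbol{y}, \det \nabla \boldsymbol{y}, \boldsymbol{m}\circ \boldsymbol{y})$, using weak continuity of the minors $(\nabla \boldsymbol{y}_n, \cof \nabla \boldsymbol{y}_n, \det \nabla \boldsymbol{y}_n)$ in $L^p \times L^{p/2} \times L^{p/3}$ and the a.e.\ convergence of the compositions, a Ioffe-type lower semicontinuity theorem for normal integrands convex in the first three variables yields $\int_\Omega W(\nabla \boldsymbol{y}, \boldsymbol{m}\circ \boldsymbol{y})\, \d\boldsymbol{x} \leq \liminf_n \int_\Omega W(\nabla \boldsymbol{y}_n, \boldsymbol{m}_n \circ \boldsymbol{y}_n)\, \d\boldsymbol{x}$. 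Summing the four estimates gives $E(\boldsymbol{q}) \leq \liminf_n E(\boldsymbol{q}_n) = \inf_{\mathcal{Q}} E$, and since $\boldsymbol{q}\in \mathcal{Q}$, it is a minimizer.

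The principal difficulty is the compactness for the magnetizations, that is, obtaining both the weak convergences \eqref{eqn:Q-convergence-m}--\eqref{eqn:Q-convergence-nabla-m} on the moving domains $\Omega^{\boldsymbol{y}_n}$ and the pointwise convergence $\boldsymbol{m}_n \circ \boldsymbol{y}_n \to \boldsymbol{m} \circ \boldsymbol{y}$ in $\Omega$. This is exactly the Eulerian--Lagrangean mismatch at the heart of the problem, and is the content of Proposition~\ref{prop:compactness}: its proof crucially exploits the Sobolev regularity and equi-integrability of $\det \nabla \boldsymbol{y}_n^{-1}$ furnished by Proposition~\ref{prop:global-invertibility}, thereby enabling the transfer of convergence between the reference and the deformed configurations.
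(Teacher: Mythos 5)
Your argument follows the paper's strategy essentially step by step: the coercivity estimate absorbing the sign-indefinite DMI term into the exchange energy via Young's inequality together with $\leb(\Omega^{\boldsymbol{y}_n})\leq\int_\Omega\det\nabla\boldsymbol{y}_n\,\d\boldsymbol{x}\leq C\|\nabla\boldsymbol{y}_n\|_{L^p}^3$ is exactly what the paper uses to obtain \eqref{eqn:coercivity-E}; the extraction of a limit $\boldsymbol{q}\in\mathcal{Q}$ together with the a.e.\ convergence of the compositions $\boldsymbol{m}_n\circ\boldsymbol{y}_n$ is delegated to Proposition~\ref{prop:compactness}, as in the paper; and the term-by-term lower semicontinuity arguments (polyconvexity plus weak continuity of minors for the elastic term, strong/weak $L^2$ pairing for the DMI term, passage to the limit in the weak formulation of the Maxwell equation for the magnetostatic term, norm lower semicontinuity for the exchange term) are the paper's.

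One caveat: the aside in which you re-derive $\det\nabla\boldsymbol{y}>0$ a.e.\ via Mazur's lemma and Fatou is not correct as written. Mazur's lemma only furnishes a.e.\ convergence of convex combinations $u_k$ of the determinants, while your uniform bound is on $\int_\Omega\gamma(\det\nabla\boldsymbol{y}_n)\,\d\boldsymbol{x}$, not on $\int_\Omega\gamma(u_k)\,\d\boldsymbol{x}$; passing from the former to the latter would require convexity of $\gamma$, which assumption \eqref{eqn:W-coercivity} does not impose ($\gamma$ is merely Borel). The paper's actual argument first deduces $\det\nabla\boldsymbol{y}\geq 0$ a.e.\ from the weak limit of $\det\nabla\boldsymbol{y}_n$, and then observes that on any set $G$ where $\det\nabla\boldsymbol{y}=0$ the non-negative sequence $\det\nabla\boldsymbol{y}_n$ has vanishing $L^1(G)$ norm, hence tends to $0$ a.e.\ on $G$ along a subsequence, after which Fatou applies directly to $\gamma(\det\nabla\boldsymbol{y}_n)$. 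Since you also invoke Proposition~\ref{prop:compactness}, which already yields $(\boldsymbol{y},\boldsymbol{m})\in\mathcal{Q}$, this aside is redundant and does not break the overall argument, but it should be either removed or corrected.
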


\begin{remark}[Applied loads]
{\MMM In Theorem \ref{thm:existence-minimizers}, applied loads can be also taken into account.} Let $\boldsymbol{f}\in L^{p'}(\Omega;\R^3)$, $\boldsymbol{g}\in L^{p'}(\Sigma;\R^3)$ where $\Sigma\subset \partial \Omega$ is $\haus$-measurable and such that $\Gamma \cap \Sigma=\emptyset$, and $\boldsymbol{h}\in L^2(\R^3;\R^3)$, represent an applied body force, surface force and magnetic field, respectively. {\MMM Then, the work of applied loads, which should be subtracted from the magnetolastic energy, is described by the functional $L\colon \mathcal{Q}\to\R$} given by
\begin{equation*}
    L(\boldsymbol{q})\coloneqq\int_\Omega \boldsymbol{f}\cdot \boldsymbol{y}\,\d \boldsymbol{x}+\int_\Sigma \boldsymbol{g}\cdot \boldsymbol{y}\,\d \haus+\int_{\Omega^{\boldsymbol{y}}} \boldsymbol{h}\cdot \boldsymbol{m}\,\d\boldsymbol{\xi},
\end{equation*}
where $\boldsymbol{q}=(\boldsymbol{y},\boldsymbol{m})$. {\MMM Note that the energy contribution determined by the external magnetic field, usually called {Zeemann energy}, is described by an Eulerian term. The functional $L$ is evidently continuous with respect to the topology of $\mathcal{Q}$, so that its treatment is trivial.}
\end{remark}

We begin by proving a compactness result {\MMM for sequences of admissible states with equi-bounded energies. In particular, we establish the convergence of compositions of magnetizations with deformations}. Recall the function $\gamma$ introduced in \eqref{eqn:W-coercivity}.

\begin{proposition}[Compactness]
\label{prop:compactness}
Let $(\boldsymbol{q}_n)\subset \mathcal{Q}$ with $\boldsymbol{q}_n=(\boldsymbol{y}_n,\boldsymbol{m}_n)$ satisfy
\begin{equation}
    \label{eqn:compactness-bound}
    ||\nabla \boldsymbol{y}_n||_{L^p(\Omega;\rtt)}\leq C, \qquad ||\nabla \boldsymbol{m}_n||_{L^2(\Omega^{\boldsymbol{y}_n};\rtt)}\leq C, \qquad ||\gamma(\det \nabla \boldsymbol{y}_n)||_{L^1(\Omega)}\leq C
\end{equation}
for every $n \in \N$. Then, there exists $\boldsymbol{q} \in \mathcal{Q}$ with $\boldsymbol{q}=(\boldsymbol{y},\boldsymbol{m})$ such that, up to subsequences, we have $\boldsymbol{q}_n \to \boldsymbol{q}$ in $\mathcal{Q}$ and also
\begin{equation}
    \label{eqn:convergence-composition}
    \text{$\boldsymbol{m}_n \circ \boldsymbol{y}_n \to  \boldsymbol{m}\circ \boldsymbol{y}$ in $L^{a}(\Omega;\R^3)$ for every $1 \leq {a} < \infty$.}
\end{equation}
\end{proposition}

{\MMM 
\begin{remark}[Anisotropy and DMI energies]
The crystalline anisotropy and the asymmetric exchange can be easily included in Theorem \ref{thm:existence-minimizers} without additional difficulties. The corresponding energy terms are desccribed by the functionals $E^{\rm ani}\colon \mathcal{Q}\to \R$ and $E^{\rm DMI}\colon \mathcal{Q}\to \R$ defined, for $\boldsymbol{q}=(\boldsymbol{y},\boldsymbol{m})\in\mathcal{Q}$, by
\begin{equation*}
    E^{\rm ani}(\boldsymbol{q})\coloneqq \int_{\Omega^{\boldsymbol{y}}}\phi(\boldsymbol{m})\,\d\boldsymbol{\xi},\qquad E^{\rm DMI}(\boldsymbol{q})\coloneqq \kappa \int_{\Omega^{\boldsymbol{y}}} \curl\boldsymbol{m}\cdot\boldsymbol{m}\,\d\boldsymbol{\xi},
\end{equation*}
where $\phi\colon \S^2 \to \R$ is continuous and $\kappa \in \R$. These two functionals  are indeed continuous with respect to the convergences given by Proposition \ref{prop:compactness}. The continuity of $E^{\rm DMI}$ is evident from \eqref{eqn:Q-convergence-nabla-m}--\eqref{eqn:Q-convergence-strong}. The continuity of $E^{\rm ani}$ follows easily from \eqref{eqn:Q-convergence-y} and \eqref{eqn:convergence-composition}. By \eqref{eqn:convergence-composition}, we can assume that compositions converge almost everywhere so that, by the Dominated Convergence Theorem, $\phi(\boldsymbol{m}_n\circ\boldsymbol{y}_n)\to\phi(\boldsymbol{m}\circ\boldsymbol{y})$ in $L^a(\Omega)$ for every $1\leq a<\infty$.
Then, exploiting the weak convergence of Jacobian determinants, which follows from \eqref{eqn:Q-convergence-y}, and employing the change of variable formula, we obtain
\begin{equation*}
    E^{\rm ani}(\boldsymbol{q}_n)=\int_\Omega \phi(\boldsymbol{m}_n\circ\boldsymbol{y}_n)\,\det\nabla\boldsymbol{y}_n\d\boldsymbol{x}\to \int_\Omega \phi(\boldsymbol{m}\circ\boldsymbol{y})\,\det\nabla\boldsymbol{y}\,\d\boldsymbol{x}=E^{\rm ani}(\boldsymbol{q}).
\end{equation*}
We mention that the continuity of $E^{\rm ani}$ can be also established without relying on \eqref{eqn:convergence-composition}, but exploiting only \eqref{eqn:Q-convergence-y} and \eqref{eqn:Q-convergence-strong} by means of a localization argument based on \eqref{eqn:inner-approximation}--\eqref{eqn:outer-approximation}.
\end{remark}
}

\begin{proof}[Proof of Proposition \ref{prop:compactness}]
For convenience of the reader, the proof is subdivided into three steps. 

\textbf{Step 1 (Compactness).}
By \eqref{eqn:compactness-bound}, using the Poincaré inequality with boundary terms, we deduce that $(\boldsymbol{y}_n)$ is bounded in $W^{1,p}(\Omega;\R^3)$ . Thus, up to subsequences, { \eqref{eqn:Q-convergence-y} holds} for some $\boldsymbol{y}\in W^{1,p}(\Omega;\R^3)$. 

We claim that $\boldsymbol{y}\in \mathcal{Y}$. Thanks to Remark \ref{rem:ciaret-necas} and the compactness of the trace operator, we only have to prove that $\det \nabla \boldsymbol{y}>0$ almost everywhere in $\Omega$. 
By the weak continuity of Jacobian minors, $\det \nabla \boldsymbol{y}_n \wk \det \nabla \boldsymbol{y}$ in $L^{p/3}(\Omega)$. Then, for every $S \subset \Omega$ measurable, we have
\begin{equation*}
    \int_S \det \nabla \boldsymbol{y}\,\d\boldsymbol{x}=\lim_{n\to \infty} \int_S \det \nabla \boldsymbol{y}_n\,\d \boldsymbol{x}\geq 0,
\end{equation*}
and, given the arbitrariness of $S$, we deduce that $\det \nabla \boldsymbol{y}\geq 0$ almost everywhere in $\Omega$.
By contradiction, suppose that $\det \nabla \boldsymbol{y}=0$ on a measurable set $S_0 \subset \Omega$ with $\leb(S_0)>0$. In this case, up to subsequences, $\det \nabla \boldsymbol{y}_n \to 0$ almost everywhere in $S_0$, and, taking into account \eqref{eqn:W-coercivity}, we obtain $\gamma(\det \nabla \boldsymbol{y}_n)\to +\infty$ almost everywhere in $S_0$. Then, by the Fatou lemma, we obtain $\liminf_{n\to\infty} \int_{S_0} \gamma(\det \nabla \boldsymbol{y}_n)\,\d \boldsymbol{x}=+\infty$, which contradicts \eqref{eqn:compactness-bound}. 
Therefore, $\leb(S_0)=0$ and $\det \nabla \boldsymbol{y}>0$ almost everywhere in $\Omega$.

The compactness of the sequence $(\boldsymbol{q}_n)$ is proved as in \cite[Proposition 2.1]{kruzik.stefanelli.zeman}.
By the Morrey embedding,  we have $\boldsymbol{y}_n\to \boldsymbol{y}$ uniformly in $\Omega$. 
From this, we obtain the following:
\begin{equation}
    \label{eqn:inner-approximation}
    \forall \,A \subset \subset \Omega^{\boldsymbol{y}}\:\text{open}, \quad  A \subset \Omega^{\boldsymbol{y}_n}\quad \text{for $n \gg 1$ depending on $A$,}
\end{equation}
\begin{equation}
    \label{eqn:outer-approximation}
    \forall \,O \supset \supset \Omega^{\boldsymbol{y}}\:\text{open}, \quad  O \supset \Omega^{\boldsymbol{y}_n}\quad \text{for $n \gg 1$ depending on $O$.}
\end{equation}
To see \eqref{eqn:inner-approximation}, let $A \subset \subset \Omega^{\boldsymbol{y}}$ be open so that $\dist(\partial A;\partial \Omega^{\boldsymbol{y}})>0$. Recall that $\partial \Omega^{\boldsymbol{y}}=\boldsymbol{y}(\partial \Omega)$ by Lemma \ref{lem:deformed-configuration}. Then, for $n \gg 1$ depending on $A$, we have
\begin{equation*}
    ||\boldsymbol{y}_n-\boldsymbol{y}||_{C^0(\closure{\Omega};\R^3)}\leq \dist(\partial A;\boldsymbol{y}(\partial \Omega)).
\end{equation*}
Let $\boldsymbol{\xi}\in A$. We obtain
\begin{equation*}
    ||\boldsymbol{y}_n-\boldsymbol{y}||_{C^0(\closure{\Omega};\R^3)}\leq\dist(\boldsymbol{\xi};\boldsymbol{y}(\partial \Omega)),
\end{equation*}
and, by the stability property of the degree \cite[Theorem 2.3, Claim (1)]{fonseca.gangbo},  we deduce $\boldsymbol{\xi}\notin \boldsymbol{y}_n(\partial \Omega)$ and $\deg(\boldsymbol{y}_n,\Omega,\boldsymbol{\xi})=\deg(\boldsymbol{y},\Omega,\boldsymbol{\xi})$ for $n \gg 1$. As $\deg(\boldsymbol{y},\Omega,\boldsymbol{\xi})>0$ by Remark \ref{rem:topological-image},  the solvability property of the degree \cite[Theorem 2.1]{fonseca.gangbo} gives  $\boldsymbol{\xi}\in \Omega^{\boldsymbol{y}_n}$ for $n \gg 1$. This proves \eqref{eqn:inner-approximation}, while \eqref{eqn:outer-approximation} is immediate.

Let $A \subset \subset \Omega^{\boldsymbol{y}}$ be open {\MMM with smooth boundary} and $n \gg 1$ as in \eqref{eqn:inner-approximation}. From  \eqref{eqn:compactness-bound}, we have 
\begin{equation}
    \label{eqn:compacntess-nabla-m}
    \int_A |\nabla \boldsymbol{m}_n|^2\,\d\boldsymbol{\xi}\leq \int_{\Omega^{\boldsymbol{y}_n}} |\nabla \boldsymbol{m}_n|^2\,\d\boldsymbol{\xi}\leq C,
\end{equation}
for every $n \gg1$.
Recalling that magnetizations are sphere-valued, we deduce that $(\boldsymbol{m}_n)$ is bounded in $W^{1,2}(A;\R^3)$, so that, up to subsequences, $\boldsymbol{m}_n \wk \boldsymbol{m}$ in $W^{1,2}(A;\R^3)$ for some $\boldsymbol{m}\in W^{1,2}(A;\R^3)$. By the Rellich embedding, $\boldsymbol{m}_n \to \boldsymbol{m}$ in $L^2(A;\R^3)$ and, in turn, $|\boldsymbol{m}|=1$ almost everywhere in $A$. The map $\boldsymbol{m}\in W^{1,2}_\loc(\Omega^{\boldsymbol{y}};\S^2)$ does not depend on $A$. In particular, as the right-hand side of \eqref{eqn:compacntess-nabla-m} does not depend on $A$, we actually have  $\boldsymbol{m}\in W^{1,2}(\Omega^{\boldsymbol{y}};\S^2)$. Therefore, $\boldsymbol{q}=(\boldsymbol{y},\boldsymbol{m})\in \mathcal{Q}$. Moreover, arguing with a sequence $(A_j)$ of open sets {\MMM with smooth boundaries} such that $A_j \subset \subset A_{j+1}\subset \subset \Omega^{\boldsymbol{y}}$ for every $j \in \N$ and $\Omega^{\boldsymbol{y}}=\bigcup_{j=1}^\infty A_j$, we select a (not relabeled) subsequence of $(\boldsymbol{m}_n)$ such that
\begin{equation}
    \label{eqn:compacntess-m-convergence}
    \forall A \subset \subset \Omega^{\boldsymbol{y}}\:\text{open}, \quad \text{$\boldsymbol{m}_n \wk \boldsymbol{m}$ in $W^{1,2}(A)$, \: $\boldsymbol{m}_n \to \boldsymbol{m}$ almost everywhere in $A$.} 
\end{equation}
We remark that, for every $A \subset \subset \Omega^{\boldsymbol{y}}$ open, the sequence $(\boldsymbol{m}_n)\subset W^{1,2}(A;\S^2)$ is defined only for $n \gg 1$ depending on $A$.

\textbf{Step 2 (Convergence in $\boldsymbol{\mathcal{Q}}$).} In order to prove that $\boldsymbol{q}_n \to \boldsymbol{q}$ in $\mathcal{Q}$, we are left to show \eqref{eqn:Q-convergence-m} and \eqref{eqn:Q-convergence-nabla-m}. To prove the first claim, we consider $\boldsymbol{\varphi}\in L^2(\R^3;\R^3)$. We need to show that
\begin{equation}
    \label{eqn:Q-weak-convergence}
    \lim_{n\to\infty}\int_{\R^3} (\chi_{\Omega^{\boldsymbol{y}_n}}\boldsymbol{m}_n - \chi_{\Omega^{\boldsymbol{y}}}\boldsymbol{m}) \cdot \boldsymbol{\varphi}\,\d\boldsymbol{x}= 0.
\end{equation}
Let $A,O\subset \R^3$  be open such that $A \subset \subset \Omega^{\boldsymbol{y}} \subset \subset O$. We write
\begin{equation}
\label{eqn:Q-convergence-split}
    \begin{split}
        \int_{\R^3} (\chi_{\Omega^{\boldsymbol{y}_n}}\boldsymbol{m}_n - \chi_{\Omega^{\boldsymbol{y}}}\boldsymbol{m}) \cdot \boldsymbol{\varphi}\,\d\boldsymbol{x}&=\int_{A} (\chi_{\Omega^{\boldsymbol{y}_n}}\boldsymbol{m}_n - \chi_{\Omega^{\boldsymbol{y}}}\boldsymbol{m}) \cdot \boldsymbol{\varphi}\,\d\boldsymbol{x}\\
        &+\int_{O \setminus A} (\chi_{\Omega^{\boldsymbol{y}_n}}\boldsymbol{m}_n - \chi_{\Omega^{\boldsymbol{y}}}\boldsymbol{m}) \cdot \boldsymbol{\varphi}\,\d\boldsymbol{x}\\
        &+\int_{\R^3\setminus O} (\chi_{\Omega^{\boldsymbol{y}_n}}\boldsymbol{m}_n - \chi_{\Omega^{\boldsymbol{y}}}\boldsymbol{m}) \cdot \boldsymbol{\varphi}\,\d\boldsymbol{x}. 
    \end{split}
\end{equation}
For the first integral on the right-hand side of \eqref{eqn:Q-convergence-split}, by \eqref{eqn:inner-approximation} for $n\gg 1$  we have
\begin{equation}
    \int_{A} (\chi_{\Omega^{\boldsymbol{y}_n}}\boldsymbol{m}_n - \chi_{\Omega^{\boldsymbol{y}}}\boldsymbol{m}) \cdot \boldsymbol{\varphi}\,\d\boldsymbol{x}= \int_A (\boldsymbol{m}_n - \boldsymbol{m}) \cdot \boldsymbol{\varphi}\,\d\boldsymbol{x},
\end{equation}
where, as $n \to \infty$, the right-hand side goes to zero since $\boldsymbol{m}_n \wk \boldsymbol{m}$ in $W^{1,2}(A;\R^3)$ by \eqref{eqn:compacntess-m-convergence}.
Using the H\"{o}lder inequality, the second integral on the right-hand side of \eqref{eqn:Q-convergence-split} is estimated as follows
\begin{equation}
    \left | \int_{O \setminus A} (\chi_{\Omega^{\boldsymbol{y}_n}}\boldsymbol{m}_n-\chi_{\Omega^{\boldsymbol{y}}}\boldsymbol{m}) \cdot \boldsymbol{\varphi}\,\d \boldsymbol{x} \right | \leq 2 \,\sqrt{\leb(O \setminus A)}\,||\boldsymbol{\varphi}||_{L^2(\R^3;\R^3)}.
\end{equation}
By \eqref{eqn:outer-approximation}, the third integral on the right-hand side of \eqref{eqn:Q-convergence-split} equals zero for $n \gg 1$. Therefore, we obtain
\begin{equation*}
    \limsup_{n\to\infty} \left | \int_{\R^3} (\chi_{\Omega^{\boldsymbol{y}_n}}\boldsymbol{m}_n - \chi_{\Omega^{\boldsymbol{y}}}\boldsymbol{m}) \cdot \boldsymbol{\varphi}\,\d\boldsymbol{x} \right|\leq 2 \,\sqrt{\leb(O \setminus A)}\,||\boldsymbol{\varphi}||_{L^2(\R^3;\R^3)},
\end{equation*}
from which, letting $O \searrow \closure{\Omega^{\boldsymbol{y}}}$ and $A \nearrow \Omega^{\boldsymbol{y}}$ so that $\leb(O \setminus A)\to \leb(\partial \Omega^{\boldsymbol{y}})=0$, we deduce \eqref{eqn:Q-weak-convergence}. Here, we used that $\partial \Omega^{\boldsymbol{y}}=\boldsymbol{y}(\partial \Omega)$ by Lemma \ref{lem:deformed-configuration} and that $\leb(\boldsymbol{y}(\partial \Omega))=0$ thanks to the Lusin property $(N)$. Thus  \eqref{eqn:Q-convergence-m} is proved. 

For the second claim, we proceed in a similar way. Given $\boldsymbol{\Phi}\in L^2(\R^3;\rtt)$, we need to show
\begin{equation}
    \label{eqn:Q-weak-convergence-gradient}
    \lim_{n \to\infty}\int_{\R^3} (\chi_{\Omega^{\boldsymbol{y}_n}}\nabla\boldsymbol{m}_n - \chi_{\Omega^{\boldsymbol{y}}}\nabla\boldsymbol{m}) : \boldsymbol{\Phi}\,\d\boldsymbol{x}= 0.
\end{equation}
As before, we consider $A,O\subset \R^3$ open with $A \subset \subset \Omega^{\boldsymbol{y}}\subset \subset O$ and we write
\begin{equation}
\label{eqn:Q-convergence-nabla-split}
    \begin{split}
        \int_{\R^3} (\chi_{\Omega^{\boldsymbol{y}_n}}\nabla\boldsymbol{m}_n - \chi_{\Omega^{\boldsymbol{y}}}\nabla\boldsymbol{m}) : \boldsymbol{\Phi}\,\d\boldsymbol{x}&=\int_{A} (\chi_{\Omega^{\boldsymbol{y}_n}}\nabla\boldsymbol{m}_n - \chi_{\Omega^{\boldsymbol{y}}}\nabla\boldsymbol{m}) : \boldsymbol{\Phi}\,\d\boldsymbol{x}\\
        &+\int_{O \setminus A} (\chi_{\Omega^{\boldsymbol{y}_n}}\nabla\boldsymbol{m}_n - \chi_{\Omega^{\boldsymbol{y}}}\nabla\boldsymbol{m}) : \boldsymbol{\Phi}\,\d\boldsymbol{x}\\
        &+\int_{\R^3\setminus O} (\chi_{\Omega^{\boldsymbol{y}_n}}\nabla\boldsymbol{m}_n - \chi_{\Omega^{\boldsymbol{y}}}\nabla\boldsymbol{m}) : \boldsymbol{\Phi}\,\d\boldsymbol{x}. 
    \end{split}
\end{equation}
For the first integral on the right-hand side of \eqref{eqn:Q-convergence-nabla-split}, by \eqref{eqn:inner-approximation}, for $n \gg 1$ we have
\begin{equation*}
    \int_{A} (\chi_{\Omega^{\boldsymbol{y}_n}}\nabla\boldsymbol{m}_n - \chi_{\Omega^{\boldsymbol{y}}}\nabla\boldsymbol{m}) : \boldsymbol{\Phi}\,\d\boldsymbol{x}=\int_A (\nabla \boldsymbol{m}_n-\nabla \boldsymbol{m}):\boldsymbol{\Phi}\,\d\boldsymbol{x},
\end{equation*}
and, as $n \to \infty$, the right-hand side goes to zero since $\boldsymbol{m}_n \wk \boldsymbol{m}$ in $W^{1,2}(A;\R^3)$ by \eqref{eqn:compacntess-m-convergence}. Note that the sequence $(\chi_{\Omega^{\boldsymbol{y}_n}}\nabla \boldsymbol{m}_n)\subset L^2(\R^3;\rtt)$ is bounded by \eqref{eqn:compactness-bound}. Using the H\"{o}lder inequality, the second integral on the right-hand side of \eqref{eqn:Q-convergence-nabla-split} is estimated as follows:
\begin{equation*}
    \begin{split}
    \Bigg | \int_{O \setminus A} (\chi_{\Omega^{\boldsymbol{y}_n}}&\nabla\boldsymbol{m}_n - \chi_{\Omega^{\boldsymbol{y}}}\nabla\boldsymbol{m}) : \boldsymbol{\Phi}\,\d\boldsymbol{x} \Bigg |\\
    &\leq \left (||\chi_{\Omega^{\boldsymbol{y}_n}}\nabla\boldsymbol{m}_n||_{L^2(\R^3;\rtt)}+||\chi_{\Omega^{\boldsymbol{y}}}\nabla\boldsymbol{m}||_{L^2(\R^3;\rtt)} \right )\,||\boldsymbol{\Phi}||_{L^2(O \setminus A;\rtt)}\\
    &\leq \left (C +||\chi_{\Omega^{\boldsymbol{y}}}\nabla\boldsymbol{m}||_{L^2(\R^3;\rtt)}\right )\,||\boldsymbol{\Phi}||_{L^2(O \setminus A;\rtt)}.
    \end{split}
\end{equation*}
By \eqref{eqn:outer-approximation}, the third integral on the right-hand side of \eqref{eqn:Q-convergence-nabla-split} equals zero for $n \gg 1$. Therefore, we obtain
\begin{equation*}
    \limsup_{n \to \infty} \left | \int_{\R^3} (\chi_{\Omega^{\boldsymbol{y}_n}}\nabla\boldsymbol{m}_n - \chi_{\Omega^{\boldsymbol{y}}}\nabla\boldsymbol{m}) : \boldsymbol{\Phi}\,\d\boldsymbol{x} \right |\leq \left (C +||\chi_{\Omega^{\boldsymbol{y}}}\nabla\boldsymbol{m}||_{L^2(\R^3;\rtt)}\right )\,||\boldsymbol{\Phi}||_{L^2(O \setminus A;\rtt)}.
\end{equation*}
From this, letting $O \searrow \closure{\Omega^{\boldsymbol{y}}}$ and $A \nearrow \Omega^{\boldsymbol{y}}$ so that $\leb(O \setminus A)\to \leb(\partial \Omega^{\boldsymbol{y}})=0$ and, in turn, $||\boldsymbol{\Phi}||_{L^2(O \setminus A;\rtt)}\to 0$, we deduce \eqref{eqn:Q-weak-convergence-gradient}. Thus also  \eqref{eqn:Q-convergence-nabla-m} is proved.

\textbf{Step 3 (Convergence of the compositions).} By Proposition \ref{prop:global-invertibility}, $\boldsymbol{y}_n^{-1}\in W^{1,1}(\Omega^{\boldsymbol{y}_n};\R^3)$ with $\det \nabla \boldsymbol{y}_n^{-1}\in L^1(\Omega^{\boldsymbol{y}_n})$ for every $n \in \N$. {\MMM We claim that, for every open set $A \subset \subset \Omega^{\boldsymbol{y}}$, the sequence $(\det \nabla \boldsymbol{y}_n^{-1})\subset L^1(A)$ is equi-integrable}. To show this, we argue as in \cite[Proposition 7.8]{barchiesi.henao.moracorral}. Define $\widehat{\gamma}\colon (0,+\infty)\to [0,+\infty)$ by setting $\widehat{\gamma}({\MMM z})\coloneqq {\MMM z}\,\gamma(1/{\MMM z})$. In this case
\begin{equation*}
    \lim_{{\MMM z} \to +\infty} \frac{\widehat{\gamma}({\MMM z})}{{\MMM z}}=\lim_{{\MMM z} \to +\infty} \gamma(1/{\MMM z})=\lim_{h \to 0^+} \gamma(h)=+\infty,
\end{equation*}
where we used \eqref{eqn:W-coercivity}.
Using the change-of-variable formula, we compute
\begin{equation*}
    \begin{split}
        \int_{\Omega^{\boldsymbol{y}_n}} \widehat{\gamma}(\det \nabla \boldsymbol{y}_n^{-1})\,\d\boldsymbol{\xi}&=\int_{\Omega^{\boldsymbol{y}_n}} \gamma(1/\det \nabla \boldsymbol{y}_n^{-1})\,\det \nabla \boldsymbol{y}_n^{-1}\,\d\boldsymbol{\xi}\\
        &=\int_{\Omega^{\boldsymbol{y}_n}} \gamma(\det \nabla \boldsymbol{y}_n)\circ \boldsymbol{y}_n^{-1}\,(\det \nabla \boldsymbol{y}_n)^{-1}\circ \boldsymbol{y}_n^{-1}\,\d\boldsymbol{\xi}\\
        &=\int_\Omega \gamma(\det \nabla \boldsymbol{y}_n)\,\d\boldsymbol{x},
    \end{split}
\end{equation*}
where the right-hand side is uniformly bounded by \eqref{eqn:compactness-bound}. Thus, {\MMM the claim follows by the de la Vallée-Poussin Criterion \cite[Theorem 2.29]{fonseca.leoni}}. In particular, using the area formula as in Remark \ref{rem:area-formula-inverse}, we deduce the following: 
\begin{equation}
    \label{eqn:equi-integrability}
    \begin{split}
        &\hspace*{4mm}\text{\MMM for every $A \subset \subset \Omega^{\boldsymbol{y}}$ open and for every $\varepsilon>0$ there exists \MMM $\delta(A,\varepsilon)>0$ such that}\\
        &\text{\MMM for every $F \subset A$ measurable with $\leb(F)<\delta(A,\varepsilon)$ there holds $\sup_{n \in \N}\leb(\boldsymbol{y}_n^{-1}(F))<\varepsilon$.}
    \end{split}
\end{equation}

We now prove that $\boldsymbol{m}_n \circ \boldsymbol{y}_n \to \boldsymbol{m}\circ \boldsymbol{y}$ in $L^1(\Omega;\R^3)$. Fix {\MMM $\varepsilon>0$}. Take $A \subset \subset \Omega^{\boldsymbol{y}}$ open such that {\MMM$\leb(\Omega \setminus \boldsymbol{y}^{-1}(A))<\varepsilon$}. We compute
\begin{equation}
    \label{eqn:composition1}
    \int_\Omega |\boldsymbol{m}_n \circ \boldsymbol{y}_n - \boldsymbol{m}\circ \boldsymbol{y}|\,\d\boldsymbol{x}=\int_{\Omega \setminus \boldsymbol{y}^{-1}(A)} |\boldsymbol{m}_n \circ \boldsymbol{y}_n - \boldsymbol{m}\circ \boldsymbol{y}|\,\d\boldsymbol{x} + \int_{\boldsymbol{y}^{-1}(A)} |\boldsymbol{m}_n \circ \boldsymbol{y}_n - \boldsymbol{m}\circ \boldsymbol{y}|\,\d\boldsymbol{x}.
\end{equation}
As magnetizations are sphere-valued, {\MMM for every $n\in \N$} the first integral on the right-hand side of \eqref{eqn:composition1} is bounded by $2 \leb(\Omega \setminus \boldsymbol{y}^{-1}(A))<2\varepsilon$. For the second integral on the right-hand side of \eqref{eqn:composition1}, we split it as
\begin{equation}
    \label{eqn:composition2}
    \begin{split}
        \int_{\boldsymbol{y}^{-1}(A)} |\boldsymbol{m}_n \circ \boldsymbol{y}_n - \boldsymbol{m}\circ \boldsymbol{y}|\,\d\boldsymbol{x}&=\int_{\boldsymbol{y}^{-1}(A) \setminus \boldsymbol{y}_n^{-1}(A)} |\boldsymbol{m}_n \circ \boldsymbol{y}_n - \boldsymbol{m}\circ \boldsymbol{y}|\,\d\boldsymbol{x}\\
        &+\int_{\boldsymbol{y}^{-1}(A)\cap \boldsymbol{y}_n^{-1}(A)} |\boldsymbol{m}_n \circ \boldsymbol{y}_n - \boldsymbol{m}\circ \boldsymbol{y}|\,\d\boldsymbol{x}.
    \end{split}
\end{equation}
We claim that {\MMM $\leb(\boldsymbol{y}^{-1}(A)\setminus \boldsymbol{y}_n^{-1}(A))<\varepsilon$ for $n\gg 1$ depending only on $\varepsilon$, so that the second integral on the right-hand side of \eqref{eqn:composition2} is bounded by $2\varepsilon$}. To see this, let $V\subset \R^3$ be open and such that $A \subset \subset V \subset \subset \Omega^{\boldsymbol{y}}$. In this case, $\boldsymbol{y}(\boldsymbol{y}^{-1}(A))=A \subset \subset V$ so that, by uniform convergence, $\boldsymbol{y}_n(\boldsymbol{y}^{-1}(A))\subset V$ for $n \gg 1$ which, in turn, gives $\boldsymbol{y}^{-1}(A)\subset \boldsymbol{y}_n^{-1}(V)$ for $n \gg 1$. Then, we have
\begin{equation}
    \label{eqn:preimage-measure-convergence}
    \boldsymbol{y}^{-1}(A)\setminus \boldsymbol{y}_n^{-1}(A)\subset \boldsymbol{y}_n^{-1}(V)\setminus \boldsymbol{y}_n^{-1}(A) =\boldsymbol{y}_n^{-1}(V \setminus A),
\end{equation}
for $n \gg1$. In particular, if $V$ is chosen such that  $\leb(V \setminus A)<{\MMM \delta(V,\varepsilon)}$ with ${\MMM \delta(V,\varepsilon)}>0$ given by \eqref{eqn:equi-integrability}. Hence, for $n \gg 1$ depending only on $\varepsilon$, from \eqref{eqn:equi-integrability} and \eqref{eqn:preimage-measure-convergence}, we obtain $\leb(\boldsymbol{y}^{-1}(A)\setminus \boldsymbol{y}_n^{-1}(A))<\varepsilon$ and the claim is proved. 

To estimate the second integral on the right-hand side of \eqref{eqn:composition2} we proceed as follows. {\MMM Henceforth, we will simply write $\delta$ in place of $\delta(A,\varepsilon)$, where $\delta(A,\varepsilon)>0$ is given by \eqref{eqn:equi-integrability}. Without loss of generality, we can assume that $\delta$ is sufficiently small in order to have $\leb(\boldsymbol{y}^{-1}(F))<\varepsilon$ for every $F \subset A$ measurable with $\leb(F)<\delta$.
By the Lusin Theorem, there exists $K_1 \subset A$ compact with $\leb(A\setminus K_1)<\delta/2$ such that $\boldsymbol{m}\restr{K_1}$ is continuous while, by the Egorov Theorem, there exists  $K_2 \subset A$ compact with $\leb(A\setminus K_2)<\delta/2$ such that $\boldsymbol{m}_n \to \boldsymbol{m}$ uniformly on $K_2$. Set $K \coloneqq K_1 \cap K_2$, so that $K \subset A$ is compact and $\leb(A \setminus K)<\delta$. We have 
\begin{equation*}
    \begin{split}
        \boldsymbol{y}^{-1}(A) \cap \boldsymbol{y}_n^{-1}(A)\subset  \left(\boldsymbol{y}^{-1}(K)\cap \boldsymbol{y}_n^{-1}(K) \right) \cup \boldsymbol{y}^{-1}(A\setminus K)
        \cup \boldsymbol{y}_n^{-1}(A \setminus K)
    \end{split}
\end{equation*}
so that we estimate the second integral on the right-hand side of \eqref{eqn:composition2} as follows
\begin{equation}
    \label{eqn:composition3}
    \begin{split}
        \int_{\boldsymbol{y}^{-1}(A)\cap \boldsymbol{y}_n^{-1}(A)} |\boldsymbol{m}_n \circ \boldsymbol{y}_n - \boldsymbol{m}\circ \boldsymbol{y}|\,\d\boldsymbol{x}&\leq \int_{\boldsymbol{y}^{-1}(K)\cap \boldsymbol{y}_n^{-1}(K)} |\boldsymbol{m}_n \circ \boldsymbol{y}_n - \boldsymbol{m}\circ \boldsymbol{y}|\,\d\boldsymbol{x}\\
        &+ \int_{\boldsymbol{y}^{-1}(A\setminus K)
        \cup \boldsymbol{y}_n^{-1}(A \setminus K)} |\boldsymbol{m}_n \circ \boldsymbol{y}_n - \boldsymbol{m}\circ \boldsymbol{y}|\,\d\boldsymbol{x}.
    \end{split}
\end{equation}
The second integral on the right-hand side of \eqref{eqn:composition3}, we have
\begin{equation}
\label{eqn:composition4}
    \begin{split}
        \int_{\boldsymbol{y}^{-1}(A\setminus K)\cup \boldsymbol{y}_n^{-1}(A\setminus K)} |\boldsymbol{m}_n \circ \boldsymbol{y}_n - \boldsymbol{m}\circ \boldsymbol{y}|\,\d\boldsymbol{x} &\leq 2 \leb \left (\boldsymbol{y}^{-1}(A\setminus K)\cup \boldsymbol{y}_n^{-1}(A\setminus K)\right )\\
        &\leq 2 \left (\leb(\boldsymbol{y}^{-1}(A\setminus K))+ \leb(\boldsymbol{y}_n^{-1}(A\setminus K)\right)\\
        &< 4 \varepsilon ,
    \end{split}
\end{equation}
where, in the last line, we used \eqref{eqn:equi-integrability}.
For the first integral on the right-hand side of \eqref{eqn:composition3}, we have
\begin{equation}
\label{eqn:composition5}
    \begin{split}
        \int_{\boldsymbol{y}^{-1}(K)\cap \boldsymbol{y}_n^{-1}(K)} |\boldsymbol{m}_n \circ \boldsymbol{y}_n - \boldsymbol{m}\circ \boldsymbol{y}|\,\d\boldsymbol{x}&\leq \int_{\boldsymbol{y}^{-1}(K)\cap \boldsymbol{y}_n^{-1}(K)} |\boldsymbol{m}_n \circ \boldsymbol{y}_n - \boldsymbol{m}\circ \boldsymbol{y}_n|\,\d\boldsymbol{x}\\
        &+\int_{\boldsymbol{y}^{-1}(K)\cap \boldsymbol{y}_n^{-1}(K)} |\boldsymbol{m} \circ \boldsymbol{y}_n - \boldsymbol{m}\circ \boldsymbol{y}|\,\d\boldsymbol{x}.
    \end{split}
\end{equation}
Note that, in the previous equation, the composition $\boldsymbol{m}\circ \boldsymbol{y}_n$ is meaningful, at least for $n \gg 1$, since the domain of integration is a subset of $\boldsymbol{y}^{-1}(A)\cap \boldsymbol{y}_n^{-1}(A)$ and $A \subset \Omega^{\boldsymbol{y}_n}$.
Given the choice of $K$, for $n \gg 1$ depending only on $\varepsilon$, we have $\sup_{K} |\boldsymbol{m}_n-\boldsymbol{m}|<\varepsilon/\leb(\Omega)$. Thus, we estimate
\begin{equation*}
    \int_{\boldsymbol{y}^{-1}(K)\cap \boldsymbol{y}_n^{-1}(K)} |\boldsymbol{m}_n \circ \boldsymbol{y}_n - \boldsymbol{m}\circ \boldsymbol{y}_n|\,\d\boldsymbol{x}< \frac{\varepsilon}{\leb(\Omega)}\,\leb(\boldsymbol{y}^{-1}(K)\cap \boldsymbol{y}_n^{-1}(K))<\varepsilon.
\end{equation*}
On the other hand, $\boldsymbol{m}$ is uniformly continuous on $K$. Hence, there exists $\bar{\delta}(\varepsilon)>0$ such that for every $\boldsymbol{\xi}_1,\boldsymbol{\xi}_2\in K$ with $|\boldsymbol{\xi}_1-\boldsymbol{\xi}_2|<\bar{\delta}(\varepsilon)$ there holds $|\boldsymbol{m}(\xi_1)-\boldsymbol{m}(\boldsymbol{\xi}_2)|<\varepsilon/\leb(\Omega)$. As a consequence, for $n\gg 1$ such that $||\boldsymbol{y}_n-\boldsymbol{y}||_{C^0(\closure{\Omega};\R^3)}<\bar{\delta}(\varepsilon)$, we obtain
\begin{equation*}
    \int_{\boldsymbol{y}^{-1}(K)\cap \boldsymbol{y}_n^{-1}(K)} |\boldsymbol{m} \circ \boldsymbol{y}_n - \boldsymbol{m}\circ \boldsymbol{y}|\,\d\boldsymbol{x}<\frac{\varepsilon}{\leb(  \Omega)}\,\leb(\boldsymbol{y}^{-1}(K)\cap \boldsymbol{y}_n^{-1}(K))<\varepsilon.
\end{equation*}
Therefore, combining \eqref{eqn:composition1}--\eqref{eqn:composition2} and \eqref{eqn:composition3}--\eqref{eqn:composition5}, we deduce that
\begin{equation*}
    \limsup_{n\to\infty} \int_\Omega |\boldsymbol{m}_n \circ \boldsymbol{y}_n - \boldsymbol{m}\circ \boldsymbol{y}|\,\d\boldsymbol{x} \leq 10\varepsilon.
\end{equation*}
As $\varepsilon>0$ was arbitrary, this concludes the proof of the convergence of compositions in $L^1(\Omega;\R^3)$. The convergence in $L^a(\Omega;\R^3)$ for every $1<a<\infty$ follows immediately by extracting a subsequence that converges almost everywhere and by applying the Dominated Convergence Theorem.}
\end{proof}

We are now ready to prove Theorem \ref{thm:existence-minimizers}.

\begin{proof}[Proof of Theorem \ref{thm:existence-minimizers}]

Let $(\boldsymbol{q}_n)\subset \mathcal{Q}$ with $\boldsymbol{q}_n=(\boldsymbol{y}_n,\boldsymbol{m}_n)$ be a minimizing sequence for $E$, namely such that $E(\boldsymbol{q}_n)\to \inf_{\mathcal{Q}}E$. In particular, $\sup_{n\in\N}E(\boldsymbol{q}_n)<+\infty$. From \eqref{eqn:W-coercivity}, we deduce \eqref{eqn:compactness-bound} so that we can apply Proposition \ref{prop:compactness}. This gives a (not relabeled) subsequence $(\boldsymbol{q}_n)$ and an admissible state $\boldsymbol{q}=(\boldsymbol{y},\boldsymbol{m})\in \mathcal{Q}$ such that $\boldsymbol{q}_{n}\to \boldsymbol{q}$ in $\mathcal{Q}$ and $\boldsymbol{m}_{n}\circ \boldsymbol{y}_{n}\to \boldsymbol{m}\circ \boldsymbol{y}$ in $L^{ a}(\Omega;\R^3)$ for every $1 \leq {a}<\infty$.

We claim that
\begin{equation}
    \label{eqn:existence-lsc}
    E(\boldsymbol{q})\leq \liminf_{n\to \infty} E(\boldsymbol{q}_{n}),
\end{equation}
so that $\boldsymbol{q}$ is a minimizer of $E$.
We focus on the elastic energy first.
We have $\nabla \boldsymbol{y}_{n} \wk \nabla \boldsymbol{y}$ in $L^p(\Omega;\rtt)$ and, by the weak continuity of Jacobian minors, we also have $\cof\,\nabla \boldsymbol{y}_{n} \wk \cof\,\nabla \boldsymbol{y}$ in $L^{p/2}(\Omega;\rtt)$ and $\det \nabla \boldsymbol{y}_{n} \wk \det \nabla \boldsymbol{y}$ in $L^{p/3}(\Omega)$. Moreover, the subsequence can be chosen in order to have $\boldsymbol{m}_{n}\circ \boldsymbol{y}_{n}\to \boldsymbol{m}\circ \boldsymbol{y}$ almost everywhere in $\Omega$. Thus, given \eqref{eqn:W-polyconvex}, applying \cite[Theorem 5.4]{ball.currie.olver} we prove that
\begin{equation}
    \label{eqn:existence-lsc-el}
    E^{\text{el}}(\boldsymbol{q})\leq \liminf_{n\to\infty} E^{\text{el}}(\boldsymbol{q}_{n}).
\end{equation}
The lower semicontinuity of the exchange energy is immediate. Indeed, by \eqref{eqn:Q-convergence-nabla-m} and the lower semicontinuity of the norm, there holds
\begin{equation}
    \label{eqn:existence-lsc-exc}
    E^{\text{exc}}(\boldsymbol{q})\leq \liminf_{n\to\infty} E^{\text{exc}}(\boldsymbol{q}_{n}).
\end{equation}

We focus on the magnetostatic energy. Denote by $\zeta_n$ a weak solutions of the Maxwell equation corresponding to $\boldsymbol{q}_{n}$. Thus, for every $n \in \N$  and for every $\varphi \in V^{1,2}(\R^3)$, there holds
\begin{equation}
    \label{eqn:existence-maxwell}
    \int_{\R^3} \nabla \zeta_n \cdot \nabla \varphi\,\d\boldsymbol{\xi}=\int_{\R^3} \chi_{\Omega^{\boldsymbol{y}_{n}}}\boldsymbol{m}_{n}\cdot \nabla \varphi\,\d\boldsymbol{\xi}.
\end{equation}
Denote by $V^{1,2}(\R^3)/\R$ the quotient of $V^{1,2}(\R^3)$ with respect to constant functions and recall that this is an Hilbert space with inner product given by
\begin{equation*}
    ([\varphi],[\psi])\mapsto \int_{\R^3} \nabla \varphi \cdot \nabla \psi\,\d\boldsymbol{\xi}.
\end{equation*}
Testing \eqref{eqn:existence-maxwell} with $\varphi=\zeta_n$ and using that $\sup_{n\in\N}\||\chi_{\Omega^{\boldsymbol{y}_{n}}}\boldsymbol{m}_{n}||_{L^2(\R^3;\R^3)}<+\infty$ by \eqref{eqn:Q-convergence-m}, we obtain that $$\sup_{n\in \N}||[\zeta_n]||_{V^{1,2}(\R^3)/\R}=\sup_{n\in \N}||\nabla \zeta_n||_{L^2(\R^3;\R^3)}<+\infty.$$ Therefore, there exists $\zeta \in V^{1,2}(\R^3)$ such that, up to subsequences, we have $[\zeta_n]\wk[\zeta]$ in $V^{1,2}(\R^3)/\R$, or equivalently, $\nabla \zeta_n \wk \nabla \zeta$ in $L^2(\R^3;\R^3)$. Passing to the limit, as $n \to \infty$, in \eqref{eqn:existence-maxwell}, we obtain that
\begin{equation*}
    \int_{\R^3} \nabla \zeta \cdot \nabla \varphi\,\d\boldsymbol{\xi}=\int_{\R^3} \chi_{\Omega^{\boldsymbol{y}}}\boldsymbol{m}\cdot \nabla \varphi\,\d\boldsymbol{\xi},
\end{equation*}
for every $\varphi\in V^{1,2}(\R)$. Thus $\zeta$ is a weak solution of the Maxwell equation corresponding to $\boldsymbol{q}$ and, in turn, $E^{\text{mag}}(\boldsymbol{q})=({\mu_0}/{2})\,||\nabla \zeta||_{L^2(\R^3;\R^3)}^2$. By the lower semicontinuity of the norm, we conclude
\begin{equation}
    \label{eqn:existence-lsc-mag}
    E^{\text{mag}}(\boldsymbol{q})\leq \liminf_{n\to \infty} E^{\text{mag}}(\boldsymbol{q}_{n}).
\end{equation}
Finally, combining \eqref{eqn:existence-lsc-el}-\eqref{eqn:existence-lsc-exc} and \eqref{eqn:existence-lsc-mag}, we get \eqref{eqn:existence-lsc}.
\end{proof}

\section{Quasistatic Setting}
\label{sec:quasistatic}

\subsection{General setting}
\label{subs:basic}
In this section we {\MMM study quasistatic evolutions of the model} driven by time-dependent applied loads and dissipative effects. The framework is the theory of \emph{rate-independent processes} \cite{mielke.roubicek} with the notion of \emph{energetic solutions}.

We start describing the general setting.
The applied loads are determined by the functions 
\begin{equation}
    \label{eqn:applied-loads-regularity}
    \boldsymbol{f}\in C^1([0,T];L^{p'}(\Omega;\R^3)), \quad \boldsymbol{g}\in C^1([0,T];L^{p'}(\Sigma;\R^3)), \quad \boldsymbol{h}\in C^1([0,T];L^2(\R^3;\R^3)),
\end{equation}
where {\MMM $\Sigma \subset \partial \Omega$ is $\haus$-measurable and such that $\Gamma \cap \Sigma=\emptyset$}, representing external body forces, surface forces and magnetic fields, respectively. Define the functional $\mathcal{L}\colon [0,T]\times \mathcal{Q}\to \R$ by setting
\begin{equation}
    \label{eqn:applied-loads-functional}
    \mathcal{L}(t,\boldsymbol{q})\coloneqq \int_\Omega \boldsymbol{f}(t)\cdot \boldsymbol{y}\,\d \boldsymbol{x}+\int_\Sigma \boldsymbol{g}(t)\cdot \boldsymbol{y}\,\d \haus+\int_{\Omega^{\boldsymbol{y}}} \boldsymbol{h}(t)\cdot \boldsymbol{m}\,\d\boldsymbol{\xi},
\end{equation}
where $\boldsymbol{q}=(\boldsymbol{y},\boldsymbol{m})$. The total energy is given by the functional $\mathcal{E}\colon [0,T]\times \mathcal{Q}\to \R$ defined by
\begin{equation}
    \label{eqn:total-energy}
    \mathcal{E}(t,\boldsymbol{q})\coloneqq E(\boldsymbol{q})-\mathcal{L}(t,\boldsymbol{q}).
\end{equation}
 By a repeated application of the H\"{o}lder inequality and the Young inequality and using \eqref{eqn:W-coercivity}, we prove  
\begin{equation}
    \label{eqn:total-energy-coercivity}
    \mathcal{E}(t,\boldsymbol{q})\geq  C_0 ||\nabla \boldsymbol{y}||_{L^p(\Omega;\rtt)}^p+||\gamma(\det \nabla \boldsymbol{y})||_{L^1(\Omega)}+\alpha||\nabla \boldsymbol{m}||^2_{L^2(\Omega^{\boldsymbol{y}};\rtt)}-C_1
\end{equation}
for every $\boldsymbol{q}=(\boldsymbol{y},\boldsymbol{m})\in \mathcal{Q}$. Here, $C_0(K)>0$ and
    $C_1(p,\overline{M},M_{\boldsymbol{f}},M_{\boldsymbol{g}},M_{\boldsymbol{h}})>0$ are two constants,
where $K>0$ was introduced in \eqref{eqn:W-coercivity} and $\overline{M}\coloneqq ||\overline{\boldsymbol{y}}||_{L^{p'}(\Sigma;\R^3)}$ takes into account the boundary datum in \eqref{eqn:admissible-deformations}. Also, we set
\begin{equation*}
    M_{\boldsymbol{f}}\coloneqq ||\boldsymbol{f}||_{C^0([0,T];L^{p'}(\Omega;\R^3))}, \quad M_{\boldsymbol{g}}\coloneqq ||\boldsymbol{g}||_{C^0([0,T];L^{p'}(\Sigma;\R^3))}, \quad M_{\boldsymbol{h}}\coloneqq ||\boldsymbol{h}||_{C^0([0,T];L^{2}(\R^3;\R^3))}.
\end{equation*}
Note that, from \eqref{eqn:total-energy-coercivity}, we deduce $\inf_{[0,T]\times \mathcal{Q}}\mathcal{E}\geq -\, C_1$.

Given the regularity of the applied loads, for every $\boldsymbol{q}=(\boldsymbol{y},\boldsymbol{m})\in \mathcal{Q}$, the map $t \mapsto \mathcal{L}(t,\boldsymbol{q})$ belongs to $C^1([0,T])$. In particular, for every $t \in [0,T]$, we compute
\begin{equation}
    \partial_t \mathcal{E}(t,\boldsymbol{q})=-\partial_t \mathcal{L}(t,\boldsymbol{q})=-\int_\Omega \dot{\boldsymbol{f}}(t)\cdot \boldsymbol{y}\,\d \boldsymbol{x}-\int_\Sigma \dot{\boldsymbol{g}}(t)\cdot \boldsymbol{y}\,\d \haus-\int_{\Omega^{\boldsymbol{y}}} \dot{\boldsymbol{h}}(t)\cdot \boldsymbol{m}\,\d\boldsymbol{\xi}.
\end{equation}
Employing again the H\"{o}lder inequality and the Young inequality and exploiting \eqref{eqn:total-energy-coercivity}, we prove the estimate
\begin{equation}
\label{eqn:time-derivative-total-energy-estimate}
    |\partial_t \mathcal{E}(t,\boldsymbol{q})|\leq L\,(\mathcal{E}(t,\boldsymbol{q})+M).
\end{equation}
Here, $L(p,K,\overline{M}, L_{\boldsymbol{f}},L_{\boldsymbol{g}},L_{\boldsymbol{h}})>0$ and $M(p,K,\overline{M},M_{\boldsymbol{f}},M_{\boldsymbol{g}},M_{\boldsymbol{h}})>0$ are two constants and we set
\begin{equation*}
    L_{\boldsymbol{f}}\coloneqq ||\dot{\boldsymbol{f}}||_{C^0([0,T];L^{p'}(\Omega;\R^3))}, \quad L_{\boldsymbol{g}}\coloneqq ||\dot{\boldsymbol{g}}||_{C^0([0,T];L^{p'}(\Sigma;\R^3))}, \quad  L_{\boldsymbol{h}}\coloneqq ||\dot{\boldsymbol{h}}||_{C^0([0,T];L^{2}(\R^3;\R^3))}.
\end{equation*}
From this, using the Gronwall inequality, we obtain
\begin{equation}
    \label{eqn:total-energy-gronwall}
    \mathcal{E}(t,\boldsymbol{q})+M \leq (\mathcal{E}(s,\boldsymbol{q})+M) e^{L(t-s)},
\end{equation}
for every $\boldsymbol{q}\in \mathcal{Q}$ and $s,t\in [0,T]$ with $s<t$.

As in \cite{roubicek.tomassetti}, we introduce the \emph{Lagrangian magnetization} given, for $\boldsymbol{q}=(\boldsymbol{y},\boldsymbol{m})\in \mathcal{Q}$, by
\begin{equation}
    \label{eqn:Lagrangean-magnetization}
    \mathcal{Z}(\boldsymbol{q})\coloneqq (\adj \nabla \boldsymbol{y}) \, \boldsymbol{m}\circ \boldsymbol{y}.
\end{equation}
The dissipation distance $\mathcal{D}\colon \mathcal{Q}\times \mathcal{Q}\to [0,+\infty)$ is defined as
\begin{equation}
    \label{eqn:dissipation-distance}
    \mathcal{D}(\boldsymbol{q},\widehat{\boldsymbol{q}})\coloneqq \int_\Omega |\mathcal{Z}(\boldsymbol{q})-\mathcal{Z}(\widehat{\boldsymbol{q}})|\,\d \boldsymbol{x}.
\end{equation}
Moreover, the variation of any map $\boldsymbol{q}\colon [0,T]\to \mathcal{Q}$ with respect to $\mathcal{D}$  on the interval $[s,t]\subset [0,T]$ is defined by 
\begin{equation}
    \label{eqn:dissipation-variation}
    \var_{\mathcal{D}}(\boldsymbol{q};[s,t])\coloneqq \sup \left \{ \sum_{i=1}^{N} \mathcal{D}(\boldsymbol{q}(t_i),\boldsymbol{q}(t_{i-1})):\: \Pi=(t_0,\dots,t_N)\:\text{partition of } [s,t]\right \}.
\end{equation}
Here, by a partition of the interval $[s,t]$ we mean any finite ordered set  $\Pi=(t_0,\dots,t_N)\subset [0,T]^N$ with $s=t_0<t_1<\dots<t_{N}=t$. Note that in \eqref{eqn:dissipation-variation} each partition can have different cardinality.

\begin{remark}[Regularity of the applied loads]
The regularity assumptions on the applied loads in \eqref{eqn:applied-loads-regularity} can be relaxed. Indeed, following \cite{mielke.roubicek}, all the analysis can still be carried out if we just assume
\begin{equation*}
     \boldsymbol{f}\in W^{1,1}(0,T;L^{p'}(\Omega;\R^3)), \quad \boldsymbol{g}\in W^{1,1}(0,T;L^{p'}(\Sigma;\R^3)), \quad \boldsymbol{h}\in W^{1,1}(0,T;L^2(\R^3;\R^3)).
\end{equation*}
\end{remark}

{\MMM
\begin{remark}[Time-dependent boundary conditions]
At the current stage, we are not able to treat time-dependent Dirichlet boundary conditions (except for the case in which the boundary datum is given time-by-time by a rigid motion). In particular, the strategy devised in \cite{francfort.mielke} is hindered by the fact that the magnetostatic energy is not differentiable in time.  
However, time-dependent Dirichlet boundary conditions can be included in the analysis in a relaxed form by removing the boundary condition in \eqref{eqn:admissible-deformations} and by enriching the total energy with the term
\begin{equation*}
    \boldsymbol{q}\mapsto \int_{\Gamma} |\boldsymbol{y}-\overline{\boldsymbol{y}}(t)|\,\d\haus,
\end{equation*}
where $\boldsymbol{q}=(\boldsymbol{y},\boldsymbol{m})\in\mathcal{Q}$ and $\overline{\boldsymbol{y}}\in C^1([0,T];C^0(\Gamma;\R^3))$ (or, more generally,  $\overline{\boldsymbol{y}}\in W^{1,1}(0,T;C^0(\Gamma;\R^3))$). From a modeling point of view, in the case in which the material is clamped, this corresponds to keeping track also of deformations of the clamp itself. Additionally, under such relaxed boundary deformations, existence of admissible deformations with finite energy is automatically guaranteed.
\end{remark}
}

The existence of energetic solutions is usually proved in two steps: first, {\MMM  for a given partition of the time interval}, one constructs a time-discrete solution {\MMM by solving the corresponding  incremental minimization problem; then, one considers the piecewise constant interpolants determined by the time-discrete solutions for a sequence of partitions of vanishing size and, by means of compactness arguments, obtains the desired time-continuous solution.}

{\MMM The first step is addressed by employing the results of Section \ref{sec:static}.} Let $\Pi=(t_0,\dots,t_N)$ be a partition of $[0,T]$. We consider the {incremental minimization problem} determined by $\Pi$ with initial data $\boldsymbol{q}^0\in \mathcal{Q}$, which reads as follows:
\begin{equation}
    \label{eqn:incremental-minimization-problem}
    \begin{split}
        \text{find}&\text{ $(\boldsymbol{q}^1,\dots,\boldsymbol{q}^N)\in \mathcal{Q}^N$ such that each $\boldsymbol{q}^i$ is a minimizer}\\
        &\hspace{5mm}\text{of $\boldsymbol{q}\mapsto \mathcal{E}(t_i,\boldsymbol{q})+\mathcal{D}(\boldsymbol{q}^{i-1},\boldsymbol{q})$ for $i=1,\dots,N$.}
    \end{split}
\end{equation}

The next result states the existence of solutions of \eqref{eqn:incremental-minimization-problem} and collects their main properties. Recall the definition of the total energy $\mathcal{E}$ and of the dissipation distance $\mathcal{D}$ in \eqref{eqn:total-energy} and \eqref{eqn:dissipation-distance}, respectively. 

\begin{proposition}[Solutions of the incremental minimization problem]
\label{prop:solutions-imp}
Assume $p>3$ and $\mathcal{Y} \neq \emptyset$. Suppose that $W$ is continuous and satisfies \eqref{eqn:W-coercivity}--\eqref{eqn:W-polyconvex}, and that the applied loads satisfy \eqref{eqn:applied-loads-regularity}. Let $\Pi=(t_0,\dots,t_N)$ be a partition of $[0,T]$ and let $\boldsymbol{q}^0 \in \mathcal{Q}$.
Then, the incremental minimization problem \eqref{eqn:incremental-minimization-problem} admits a solution $(\boldsymbol{q}^1,\dots,\boldsymbol{q}^N)\in \mathcal{Q}^N$. Moreover, if $\boldsymbol{q}^0$ is such that
\begin{equation}
    \label{eqn:stable-0}
    \mathcal{E}(0,\boldsymbol{q}^0)\leq \mathcal{E}(0,\widehat{\boldsymbol{q}})+\mathcal{D}(\boldsymbol{q}^0,\widehat{\boldsymbol{q}})
\end{equation}
for every $\widehat{ \boldsymbol{q}}\in \mathcal{Q}$, then the following holds:
\begin{equation}
    \label{eqn:imp-stability}
    \forall\,i=1,\dots,N,\:\forall\,\widehat{\boldsymbol{q}}\in \mathcal{Q},\quad \mathcal{E}(t_i,\boldsymbol{q}^i)\leq \mathcal{E}(t_i,\widehat{\boldsymbol{q}})+\mathcal{D}(\boldsymbol{q}^i,\widehat{\boldsymbol{q}}),
\end{equation}
\begin{equation}
    \label{eqn:imp-energy-inequality}
    \forall\,i=1,\dots,N,\quad \mathcal{E}(t_i,\boldsymbol{q}^i)-\mathcal{E}(t_{i-1},\boldsymbol{q}^{i-1})+\mathcal{D}(\boldsymbol{q}^{i-1},\boldsymbol{q}^i)\leq \int_{t_{i-1}}^{t_i} \partial_t \mathcal{E}(\tau,\boldsymbol{q}^{i-1})\,\d\tau,\vspace{-3mm}
\end{equation}
\begin{equation}
    \label{eqn:imp-energy-estimate}
    \forall\,i=1,\dots,N,\quad \mathcal{E}(t_i,\boldsymbol{q}^i)+M+\sum_{j=1}^i \mathcal{D}(\boldsymbol{q}^{j-1},\boldsymbol{q}^j)\leq (\mathcal{E}(0,\boldsymbol{q}^0)+M)\,e^{L t_i}.
\end{equation}
\end{proposition}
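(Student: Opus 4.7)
The plan is to construct the iterates $\boldsymbol{q}^1,\dots,\boldsymbol{q}^N$ inductively by the direct method, borrowing compactness from Proposition \ref{prop:compactness} and lower semicontinuity from (the proof of) Theorem \ref{thm:existence-minimizers}, and then to derive \eqref{eqn:imp-stability}--\eqref{eqn:imp-energy-estimate} from the minimality conditions via standard manipulations together with the Gronwall-type bound \eqref{eqn:total-energy-gronwall}. The single analytic novelty compared to the static case will be the lower semicontinuity of $\mathcal{D}(\boldsymbol{q}^{i-1},\cdot)$ with respect to the $\mathcal{Q}$-topology.

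\textbf{Inductive existence.} Assuming $\boldsymbol{q}^{i-1}$ has been constructed, I would minimize $\mathcal{F}_i(\boldsymbol{q})\coloneqq \mathcal{E}(t_i,\boldsymbol{q})+\mathcal{D}(\boldsymbol{q}^{i-1},\boldsymbol{q})$ over $\mathcal{Q}$. The competitor $\boldsymbol{q}=\boldsymbol{q}^{i-1}$ gives $\mathcal{F}_i(\boldsymbol{q}^{i-1})=\mathcal{E}(t_i,\boldsymbol{q}^{i-1})<+\infty$, and finiteness propagates inductively since $\mathcal{E}(t_i,\boldsymbol{q}^i)\leq \mathcal{F}_i(\boldsymbol{q}^i)\leq \mathcal{E}(t_i,\boldsymbol{q}^{i-1})$. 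Coercivity of $\mathcal{F}_i$ follows from \eqref{eqn:total-energy-coercivity} together with $\mathcal{D}\geq 0$, so any minimizing sequence $(\boldsymbol{q}_n)$ satisfies \eqref{eqn:compactness-bound}. Proposition \ref{prop:compactness} yields, up to a subsequence, a limit $\boldsymbol{q}^i\in \mathcal{Q}$ with $\boldsymbol{q}_n\to \boldsymbol{q}^i$ in $\mathcal{Q}$ and $\boldsymbol{m}_n\circ \boldsymbol{y}_n\to \boldsymbol{m}^i\circ \boldsymbol{y}^i$ strongly in every $L^a(\Omega;\R^3)$. Lower semicontinuity of $\mathcal{E}(t_i,\cdot)$ is the same as in Theorem \ref{thm:existence-minimizers}, the load term $\mathcal{L}(t_i,\cdot)$ being even continuous under these convergences.

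\textbf{The main obstacle: lower semicontinuity of $\mathcal{D}$.} For the above $\boldsymbol{q}_n\to \boldsymbol{q}^i$ in $\mathcal{Q}$, weak continuity of minors gives $\adj \nabla \boldsymbol{y}_n \wk \adj \nabla \boldsymbol{y}^i$ in $L^{p/2}(\Omega;\rtt)$, while Proposition \ref{prop:compactness} provides $\boldsymbol{m}_n\circ \boldsymbol{y}_n \to \boldsymbol{m}^i\circ \boldsymbol{y}^i$ strongly in $L^{(p/2)'}(\Omega;\R^3)$ (the exponent $(p/2)'$ is finite because $p>3>2$, and the compositions have modulus one). Writing $\boldsymbol{\Phi}\cdot [(\adj \nabla \boldsymbol{y}_n)(\boldsymbol{m}_n\circ \boldsymbol{y}_n)]=(\boldsymbol{\Phi}\otimes (\boldsymbol{m}_n\circ \boldsymbol{y}_n)):\adj \nabla \boldsymbol{y}_n$ for arbitrary $\boldsymbol{\Phi}\in L^\infty(\Omega;\R^3)$ and invoking the standard weak--strong duality between $L^{p/2}$ and $L^{(p/2)'}$, one deduces $\mathcal{Z}(\boldsymbol{q}_n)\wk \mathcal{Z}(\boldsymbol{q}^i)$ in $L^1(\Omega;\R^3)$. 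Since the $L^1$-norm is lower semicontinuous under weak $L^1$-convergence, this gives
\begin{equation*}
\mathcal{D}(\boldsymbol{q}^{i-1},\boldsymbol{q}^i)=\|\mathcal{Z}(\boldsymbol{q}^{i-1})-\mathcal{Z}(\boldsymbol{q}^i)\|_{L^1(\Omega;\R^3)}\leq \liminf_n \mathcal{D}(\boldsymbol{q}^{i-1},\boldsymbol{q}_n),
\end{equation*}
which, combined with the lower semicontinuity of $\mathcal{E}(t_i,\cdot)$, shows that $\boldsymbol{q}^i$ minimizes $\mathcal{F}_i$.

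\textbf{The three assertions.} The stability \eqref{eqn:imp-stability} is a direct consequence of minimality and of the triangle inequality $\mathcal{D}(\boldsymbol{q}^{i-1},\widehat{\boldsymbol{q}})\leq \mathcal{D}(\boldsymbol{q}^{i-1},\boldsymbol{q}^i)+\mathcal{D}(\boldsymbol{q}^i,\widehat{\boldsymbol{q}})$ inherited from the $L^1$-norm: inserting this into the minimality of $\boldsymbol{q}^i$ and cancelling $\mathcal{D}(\boldsymbol{q}^{i-1},\boldsymbol{q}^i)$ yields \eqref{eqn:imp-stability}. The discrete energy inequality \eqref{eqn:imp-energy-inequality} follows by choosing $\widehat{\boldsymbol{q}}=\boldsymbol{q}^{i-1}$ in the minimality of $\boldsymbol{q}^i$ (so that $\mathcal{D}(\boldsymbol{q}^{i-1},\widehat{\boldsymbol{q}})=0$) and writing $\mathcal{E}(t_i,\boldsymbol{q}^{i-1})-\mathcal{E}(t_{i-1},\boldsymbol{q}^{i-1})=\int_{t_{i-1}}^{t_i}\partial_t \mathcal{E}(\tau,\boldsymbol{q}^{i-1})\,\d\tau$. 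Finally, adding $M$ to both sides of \eqref{eqn:imp-energy-inequality} and applying \eqref{eqn:total-energy-gronwall} (with $q=\boldsymbol{q}^{i-1}$, $s=t_{i-1}$) to bound the integral on the right yields
\begin{equation*}
\mathcal{E}(t_i,\boldsymbol{q}^i)+M+\mathcal{D}(\boldsymbol{q}^{i-1},\boldsymbol{q}^i)\leq (\mathcal{E}(t_{i-1},\boldsymbol{q}^{i-1})+M)\,e^{L(t_i-t_{i-1})};
\end{equation*}
adding the already accumulated dissipations $\sum_{j=1}^{i-1}\mathcal{D}(\boldsymbol{q}^{j-1},\boldsymbol{q}^j)$ to both sides and iterating in $i$ produces \eqref{eqn:imp-energy-estimate}. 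I note that the initial stability \eqref{eqn:stable-0} is the natural compatibility condition ensuring that \eqref{eqn:imp-stability} holds also at index $i=0$; it is not needed for the inductive step.
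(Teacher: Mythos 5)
Your proposal is correct and follows essentially the same route as the paper: existence of each incremental minimizer by the direct method (coercivity from \eqref{eqn:total-energy-coercivity}, compactness from Proposition \ref{prop:compactness}, lower semicontinuity of $\mathcal{E}$ as in Theorem \ref{thm:existence-minimizers}), with the key extra observation being that $\mathcal{Z}(\boldsymbol{q}_n)\wk\mathcal{Z}(\boldsymbol{q})$ in $L^1$ from the weak $L^{p/2}$-convergence of $\adj\nabla\boldsymbol{y}_n$ paired with the strong convergence of $\boldsymbol{m}_n\circ\boldsymbol{y}_n$, giving lower semicontinuity of $\mathcal{D}$. The paper leaves the derivation of \eqref{eqn:imp-stability}--\eqref{eqn:imp-energy-estimate} as a reference to \cite[Theorem 3.2]{mielke}, whereas you spell out the triangle-inequality and Gronwall manipulations explicitly; your observation that \eqref{eqn:stable-0} is not actually used in deriving \eqref{eqn:imp-stability}--\eqref{eqn:imp-energy-estimate} is accurate.
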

\begin{proof}
The main point is to prove the existence of solutions of \eqref{eqn:incremental-minimization-problem}. Given a solution of \eqref{eqn:incremental-minimization-problem} where $\boldsymbol{q}^0$ satisfies \eqref{eqn:stable-0}, then \eqref{eqn:imp-stability}-\eqref{eqn:imp-energy-estimate} are obtained by standard computations as in \cite[Theorem 3.2]{mielke}.

It is sufficient to show that, for $\tilde{t}\in [0,T]$ and $\widetilde{\boldsymbol{q}}\in \mathcal{Q}$ fixed, the auxiliary functional $\mathcal{F}\colon \mathcal{Q} \to \R$ given by
    $\mathcal{F}(\boldsymbol{q})\coloneqq \mathcal{E}(\tilde{t},\boldsymbol{q})+\mathcal{D}(\widetilde{\boldsymbol{q}},\boldsymbol{q})$,
admits a minimizer in $\mathcal{Q}$.
As $\mathcal{D}$ is positive, from \eqref{eqn:total-energy-coercivity} we have {\MMM
\begin{equation}
    \label{eqn:F-coercivity}
    \mathcal{F}(\boldsymbol{q})\geq  C_0 ||\nabla \boldsymbol{y}||_{L^p(\Omega;\rtt)}^p+||\gamma(\det \nabla \boldsymbol{y})||_{L^1(\Omega)}+\alpha||\nabla \boldsymbol{m}||^2_{L^2(\Omega^{\boldsymbol{y}};\rtt)}- C_1
\end{equation}
for every $\boldsymbol{q}\in \mathcal{Q}$ with $\boldsymbol{q}=(\boldsymbol{y},\boldsymbol{m})$.}
Let $(\boldsymbol{q}_n)\subset \mathcal{Q}$ with $\boldsymbol{q}_n=(\boldsymbol{y}_n,\boldsymbol{m}_n)$ be a minimizing sequence for $\mathcal{F}$, namely such that $\mathcal{F}(\boldsymbol{q}_n)\to \inf_{\mathcal{Q}}\mathcal{F}$. In particular, $\sup_{n\in \N}\mathcal{F}(\boldsymbol{q}_n)<+\infty$, so that \eqref{eqn:F-coercivity} yields \eqref{eqn:compactness-bound}. By Proposition \ref{prop:compactness}, there exists $\boldsymbol{q}\in \mathcal{Q}$ such that, up to subsequences, we have $\boldsymbol{q}_n \to \boldsymbol{q}$ in $\mathcal{Q}$ and $\boldsymbol{m}_n\circ \boldsymbol{y}_n \to \boldsymbol{m}\circ \boldsymbol{y}$ in $L^{\RRR a}(\Omega;\R^3)$ for every $1\leq {\RRR a}<\infty$. Arguing as in the proof of Theorem \ref{thm:existence-minimizers}, we prove \eqref{eqn:existence-lsc}
while, exploiting the weak continuity of the trace operator, we get
\begin{equation}
    \label{eqn:auxilliary-L}
    \mathcal{L}(\tilde{t},\boldsymbol{q})=\lim_{n\to \infty} \mathcal{L}(\tilde{t},\boldsymbol{q}_n).
\end{equation}
By the weak continuity of {\MMM Jacobian} minors, $\cof\,\nabla \boldsymbol{y}_n \wk \cof\,\nabla\boldsymbol{y}$ in $L^{{p}/{2}}(\Omega;\rtt)$. {\MMM This, combined with the convergence of  $(\boldsymbol{m}_n \circ \boldsymbol{y}_n)$ in $L^{({p}/{2})'}(\Omega;\R^3)$, yields $\mathcal{Z}(\boldsymbol{q}_n)\wk \mathcal{Z}(\boldsymbol{q})$ in $L^1(\Omega;\R^3)$} and, by the lower semicontinuity of the norm, we deduce
\begin{equation}
    \label{eqn:auxilliary-D}
    \mathcal{D}(\widetilde{\boldsymbol{q}},\boldsymbol{q})\leq \liminf_{n\to \infty} \mathcal{D}(\widetilde{\boldsymbol{q}},\boldsymbol{q}_n).
\end{equation}
Finally, combining \eqref{eqn:existence-lsc} and \eqref{eqn:auxilliary-L}--\eqref{eqn:auxilliary-D}, we obtain
\begin{equation*}
    \mathcal{F}(\boldsymbol{q})\leq \liminf_{n\to \infty} \mathcal{F}(\boldsymbol{q}_n),  
\end{equation*}
so that $\boldsymbol{q}$ is a minimizer of $\mathcal{F}$.
\end{proof}

{\MMM Unfortunately, in our setting, we can not proceed with the second step of the proof of the existence of energetic solutions. This is due to a lack of compactness in the dissipative variable which is typical of large-strain theories. Therefore, in the next subsection, we propose a regularization of the model in the spirit of gradient polyconvexity \cite{benesova.kruzik.schloemerkemper}.}

\subsection{Regularized setting}
\label{subs:time-continuous} 
Henceforth, we regularize the problem as follows. Recalling \eqref{eqn:admissible-deformations},  we restrict ourselves to the class of deformations
\begin{equation}
        \label{eqn:admissible-deformation-regularized}
        \widetilde{\mathcal{Y}}\coloneqq \left \{\boldsymbol{y}\in \mathcal{Y}:\:\cof\,\nabla \boldsymbol{y}\in BV(\Omega;\R^{3 \times 3}) \right \},
\end{equation}
so that the corresponding class of admissible states is given by
\begin{equation}
    \widetilde{\mathcal{Q}}\coloneqq \left \{(\boldsymbol{y},\boldsymbol{m}):\:\boldsymbol{y}\in \widetilde{\mathcal{Y}},\:\boldsymbol{m}\in W^{1,2}(\Omega^{\boldsymbol{y}};\S^2) \right \}.
\end{equation}
{\MMM Equivalently,} in \eqref{eqn:admissible-deformation-regularized}, {\MMM we require that the distributional gradient of $\cof\,\nabla \boldsymbol{y}$ is given by a bounded tensor-valued Radon measure $D(\cof\,\nabla \boldsymbol{y})\in \mathcal{M}_{\rm b}(\Omega;\R^{3 \times 3 \times 3})$.}

\begin{example}
Let $\Omega$, $P$ and $\boldsymbol{y}$ be as in Example \ref{ex:ball} and recall Example \ref{ex:ball2}. Then, $\boldsymbol{y}\in \widetilde{\mathcal{Y}}$. To see this, for every $\boldsymbol{x}\in \Omega \setminus P$ with $\boldsymbol{x}=(x_1,x_2,x_3)$, we compute
\begin{equation*}
    \cof\,\nabla \boldsymbol{y}(\boldsymbol{x})\coloneqq \begin{pmatrix}
      |x_1| & 0 & -{x_1\,x_3}/{|x_1|}\\
      0 & |x_1| & 0\\
      0 & 0 & 1
    \end{pmatrix}.
\end{equation*}
Set $u(\boldsymbol{x})\coloneqq |x_1|$ and $v(\boldsymbol{x})\coloneqq -{x_1\,x_3}/{|x_1|}$. Then $u \in W^{1,\infty}(\Omega)$, while $v\in BV(\Omega)$ since $$D_1 v=w\,\haus \mres{\{0\}\times (-1,1)^2},$$ where $D_1$ denotes the  distributional derivative with respect  to the first variable and we set $w(\boldsymbol{x})\coloneqq 2 x_3$. Therefore $\boldsymbol{y}\in \widetilde{\mathcal{Y}}$.
\end{example}

{\MMM
\begin{example}
Define $f\colon [0,1]\to \R$ by setting $f(x)\coloneqq x^2\cos^2(\pi/x^2)$ for every $0<x\leq 1$ and $f(0)\coloneqq 0$, and let $g\colon [0,1]\to \R$ be given by $g(x)\coloneqq \int_0^x f(z)\,\d z$. We have $f\in C^0([0,1])\setminus BV([0,1])$ and $g \in C^1([0,1])$. Moreover, $g$ is strictly increasing and, in turn, injective. Let $\Omega\coloneqq (0,1)^3$ and define $\boldsymbol{y}\colon \Omega \to \R^3$ by $\boldsymbol{y}(\boldsymbol{x})\coloneqq (x_1,x_2,g(x_1)x_3)$, where $\boldsymbol{x}=(x_1,x_2,x_3)$. In this case, $\boldsymbol{y}\in C^1(\closure{\Omega};\R^3)$ is a homeomorphism and $\det\nabla\boldsymbol{y}>0$. However
\begin{equation*}
    \cof\,\nabla \boldsymbol{y}(\boldsymbol{x})\coloneqq \begin{pmatrix}
      g(x_1) & 0 & -f(x_1)x_3\\
      0 & g(x_1) & 0\\
      0 & 0 & 1
    \end{pmatrix},
\end{equation*}
so that $\cof\nabla\boldsymbol{y}\notin BV(\Omega;\rtt)$. In particular, for $\Gamma\coloneqq \{1\}\times (0,1)^2$ and $\overline{\boldsymbol{y}}\coloneqq\boldsymbol{id}$, there holds $\boldsymbol{y}\in\mathcal{Y}\setminus\widetilde{\mathcal{Y}}$.
\end{example}
}

Recalling \eqref{eqn:energy-E}, the regularized magnetoelastic energy $\widetilde{E}\colon \widetilde{\mathcal{Q}}\to \R$ is given by
\begin{equation}
\label{eqn:E-reg}
    \widetilde{E}(\boldsymbol{q})\coloneqq E(\boldsymbol{q})+|D (\cof\,\nabla \boldsymbol{y})|(\Omega), 
\end{equation}
where $\boldsymbol{q}=(\boldsymbol{y},\boldsymbol{m})$ and $|D(\cof\,\nabla \boldsymbol{y})|(\Omega)$ denotes the total variation of  the measure $D(\cof\,\nabla \boldsymbol{y})$ over  $\Omega$.
The corresponding total energy $\widetilde{\mathcal{E}}\colon [0,T] \times \widetilde{\mathcal{Q}}\to \R$ is defined as
\begin{equation}
    \widetilde{\mathcal{E}}(t,\boldsymbol{q})\coloneqq \widetilde{E}(\boldsymbol{q})-\mathcal{L}(t,\boldsymbol{q}),
\end{equation}
where $\mathcal{L}$ is given by \eqref{eqn:applied-loads-functional}. Also, analogously to \eqref{eqn:total-energy-gronwall}, there holds
\begin{equation}
    \label{eqn:total-energy-regularized-gronwall}
    \widetilde{\mathcal{E}}(t,\boldsymbol{q})+M \leq (\widetilde{\mathcal{E}}(s,\boldsymbol{q})+M)e^{L(t-s)}
\end{equation}
for every $\boldsymbol{q}\in \widetilde{\mathcal{Q}}$ and $s,t\in [0,T]$ with $s<t$.

{\MMM The second main result of the paper states the existence of energetic solutions for the regularized model.}

\begin{theorem}[Existence of energetic solutions]
\label{thm:existence-energetic-solution}
Assume $p>3$ and $\widetilde{\mathcal{Y}}\neq \emptyset$. Suppose that $W$ is continuous and satisfies \eqref{eqn:W-coercivity}--\eqref{eqn:W-polyconvex}, and that the applied loads satisfy \eqref{eqn:applied-loads-regularity}. Then, for every $\boldsymbol{q}^0 \in \widetilde{\mathcal{Q}}$ satisfying 
\begin{equation}
    \label{eqn:stable-0-regularization}
    \forall\,\widehat{\boldsymbol{q}}\in \widetilde{\mathcal{Q}},\quad \widetilde{\mathcal{E}}(0,\boldsymbol{q}^0)\leq \widetilde{\mathcal{E}}(0,\widehat{\boldsymbol{q}})+\mathcal{D}(\boldsymbol{q}^0,\widehat{\boldsymbol{q}}),
\end{equation}
there exists an energetic solution $\boldsymbol{q}\colon [0,T]\to \widetilde{\mathcal{Q}}$ of the regularized model which fulfills the initial condition $\boldsymbol{q}(0)=\boldsymbol{q}^0$. Namely, the following  \emph{stability condition} and \emph{energy balance} hold:
\begin{equation}
    \label{eqn:energetic-solution-stability}
    \forall t \in [0,T],\:\:\forall\, \widehat{\boldsymbol{q}}\in \widehat{\mathcal{Q}}, \quad \widetilde{\mathcal{E}}(t,\boldsymbol{q}(t))\leq \widetilde{\mathcal{E}}(t,\widehat{\boldsymbol{q}})+\mathcal{D}(\boldsymbol{q}(t),\widehat{\boldsymbol{q}}),
\end{equation}
\begin{equation}
    \label{eqn:energetic-solution-energy-balance}
    \forall t \in [0,T], \quad \widetilde{\mathcal{E}}(t,\boldsymbol{q}(t))+\var_{\mathcal{D}}(\boldsymbol{q};[0,t])=\widetilde{\mathcal{E}}(0,\boldsymbol{q}^0)+\int_0^t \partial_t \widetilde{\mathcal{E}}(\tau,\boldsymbol{q}(\tau))\,\d\tau.
\end{equation}
\end{theorem}

{\MMM Given the highly nonconvex character of the energy, we can not establish any regularity of the solution. However, by applying a suitable version of the Measurable Selection Lemma, the existence of a measurable energetic solution can be ensured.}

{\MMM As already mentioned, the proof Theorem \ref{thm:existence-energetic-solution} proceeds by time-discretization.}
Let $\Pi=(t_0,\dots,t_N)$ be a partition of $[0,T]$. The {\MMM {regularized}} {incremental minimization problem} determined by $\Pi$ with initial data $\boldsymbol{q}^0\in \mathcal{Q}$ reads as follows:
\begin{equation}
    \label{eqn:incremental-minimization-problem-regularization}
    \begin{split}
        \text{find}&\text{ $(\boldsymbol{q}^1,\dots,\boldsymbol{q}^N)\in \widetilde{\mathcal{Q}}^N$ such that each $\boldsymbol{q}^i$ is a minimizer}\\
        &\hspace{6mm}\text{of $\boldsymbol{q}\mapsto \widetilde{\mathcal{E}}(t_i,\boldsymbol{q})+\mathcal{D}(\boldsymbol{q}^{i-1},\boldsymbol{q})$ for $i=1,\dots,N$.}
    \end{split}
\end{equation}

{\MMM The next result provides the analogous of Proposition \ref{prop:solutions-imp} in the regularized setting.}

\begin{proposition}[Solutions of the {\MMM regularized} incremental minimization problem]
\label{prop:solutions-imp-regularization}
Assume $p>3$ and $\widetilde{\mathcal{Y}} \neq \emptyset$. Suppose that $W$ is continuous and satisfies \eqref{eqn:W-coercivity}--\eqref{eqn:W-polyconvex}, and that the applied loads satisfy \eqref{eqn:applied-loads-regularity}. Let $\Pi=(t_0,\dots,t_N)$ be a partition of $[0,T]$ and let $\boldsymbol{q}^0 \in \widetilde{\mathcal{Q}}$.
Then, the incremental minimization problem \eqref{eqn:incremental-minimization-problem-regularization} admits a solution $(\boldsymbol{q}^1,\dots,\boldsymbol{q}^N)\in \widetilde{\mathcal{Q}}^N$. Moreover, if $\boldsymbol{q}^0$ satisfies
\eqref{eqn:stable-0-regularization},
then the following holds:
\begin{equation}
    \label{eqn:imp-stability-regularization}
    \forall\,i=1,\dots,N,\:\:\forall\,\widehat{\boldsymbol{q}}\in \widetilde{\mathcal{Q}},\quad \widetilde{\mathcal{E}}(t_i,\boldsymbol{q}^i)\leq \widetilde{\mathcal{E}}(t_i,\widehat{\boldsymbol{q}})+\mathcal{D}(\boldsymbol{q}^i,\widehat{\boldsymbol{q}}),
\end{equation}
\begin{equation}
    \label{eqn:imp-energy-inequality-regularization}
    \forall\,i=1,\dots,N,\quad \widetilde{\mathcal{E}}(t_i,\boldsymbol{q}^i)-\widetilde{\mathcal{E}}(t_{i-1},\boldsymbol{q}^{i-1})+\mathcal{D}(\boldsymbol{q}^{i-1},\boldsymbol{q}^i)\leq \int_{t_{i-1}}^{t_i} \partial_t \widetilde{\mathcal{E}}(\tau,\boldsymbol{q}^{i-1})\,\d\tau, \vspace{-3mm}
\end{equation} 
\begin{equation}
    \label{eqn:imp-a-priori-estimate-regularization}
    \forall\,i=1,\dots,N,\quad \widetilde{\mathcal{E}}(t_i,\boldsymbol{q}^i)+M+\sum_{j=1}^i \mathcal{D}(\boldsymbol{q}^{j-1},\boldsymbol{q}^j)\leq (\widetilde{\mathcal{E}}(0,\boldsymbol{q}^0)+M)\,e^{L t_i}.
\end{equation}
\end{proposition}
\begin{proof}
Again, the main point is to prove the existence of solutions to \eqref{eqn:incremental-minimization-problem-regularization}. Hence, we show that the auxiliary functional $\widetilde{\mathcal{F}}\colon \widetilde{\mathcal{Q}}\to \R$ defined by $\widetilde{\mathcal{F}}(\boldsymbol{q})\coloneqq \widetilde{\mathcal{E}}(\tilde{t},\boldsymbol{q})+\mathcal{D}(\widetilde{\boldsymbol{q}},\boldsymbol{q})$, where $\tilde{t}\in[0,T]$ and $\widetilde{\boldsymbol{q}}\in \widetilde{\mathcal{Q}}$ are fixed, admits a minimizer in $\widetilde{\mathcal{Q}}$. {\MMM The proof goes as the one of Proposition \ref{prop:solutions-imp}. Let $(\boldsymbol{q}_n)\subset\widetilde{\mathcal{Q}}$ with $\boldsymbol{q}_n=(\boldsymbol{y}_n,\boldsymbol{m}_n)$ be a minimizing sequence for $\widetilde{\mathcal{F}}$, namely such that $\widetilde{\mathcal{F}}(\boldsymbol{q}_n)\to\inf_{\widetilde{\mathcal{Q}}}\widetilde{\mathcal{F}}$. We have $\sup_{n\in\N}\widetilde{\mathcal{F}}(\boldsymbol{q}_n)<+\infty$. From this, exploiting the coercivity in \eqref{eqn:total-energy-coercivity} and applying Proposition \ref{prop:compactness}, we find $\boldsymbol{q}=(\boldsymbol{y},\boldsymbol{m})\in\mathcal{Q}$ such that, up to subsequences, the convergences in \eqref{eqn:Q-convergence-m}--\eqref{eqn:Q-convergence-strong} and \eqref{eqn:convergence-composition} hold true. These allow us to establish \eqref{eqn:existence-lsc} and \eqref{eqn:auxilliary-L}--\eqref{eqn:auxilliary-D} as in Proposition \ref{prop:solutions-imp}.
We also deduce that the sequence $(\cof\nabla\boldsymbol{y}_n)$ is bounded in $BV(\Omega;\rtt)$. Thus, up to subsequences, there hold
\begin{equation}
\label{eqn:auxilliary-BV}
    \text{$\cof\,\nabla \boldsymbol{y}_n\to\boldsymbol{G}$ in $L^{3/2}(\Omega;\rtt)$, \qquad $D(\cof\,\nabla \boldsymbol{y}_n)\wks D\boldsymbol{G}$ in $\mathcal{M}_{\rm b}(\Omega;\R^{3\times 3\times 3})$,}
\end{equation}
for some $\boldsymbol{G}\in BV(\Omega;\rtt)$. By \eqref{eqn:Q-convergence-y} and the weak continuity of Jacobian minors, $\boldsymbol{G}=\cof\nabla\boldsymbol{y}$ and, in particular, $\boldsymbol{y}\in\widetilde{\mathcal{Y}}$ and $\boldsymbol{q}\in\widetilde{\mathcal{Q}}$. Moreover, by the lower semicontinuity of the total variation, we obtain
\begin{equation*}
    |D(\cof\nabla\boldsymbol{y})|(\Omega)\leq\liminf_{n\to\infty}|D(\cof\nabla\boldsymbol{y}_n)|(\Omega).
\end{equation*}
This, combined with \eqref{eqn:existence-lsc} and \eqref{eqn:auxilliary-L}--\eqref{eqn:auxilliary-D}, yields
\begin{equation*}
    \widetilde{\mathcal{F}}(\boldsymbol{q})\leq\liminf_{n\to\infty}\widetilde{\mathcal{F}}(\boldsymbol{q}_n)
\end{equation*}
and, in turn, $\boldsymbol{q}$ is a minimizer of $\widetilde{\mathcal{F}}$.}
\end{proof}

{\MMM 
\begin{remark}[Gradient polyconvexity]
The regularization introduced in Subsection \ref{subs:time-continuous} makes assumption \eqref{eqn:W-polyconvex} superfluous in the proof of Proposition \ref{prop:solutions-imp-regularization} as well as in the rest of our analysis. By \eqref{eqn:auxilliary-BV},  up to subsequences, we have $\cof\nabla \boldsymbol{y}_n\to\cof\nabla\boldsymbol{y}$  almost everywhere. As  observed in \cite{benesova.kruzik.schloemerkemper}, this entails
$\nabla \boldsymbol{y}_n \to \nabla \boldsymbol{y}$ almost everywhere.  Indeed, exploiting the identity $\det(\cof\boldsymbol{F})=(\det\boldsymbol{F})^2$ for every $\boldsymbol{F}\in\rtt$, we see that $\det \nabla \boldsymbol{y}_n\to\det\nabla\boldsymbol{y}$ almost everywhere. Then, by the formula $\boldsymbol{F}^{-1}=(\det\boldsymbol{F})^{-1}(\adj\boldsymbol{F})$ for every $\boldsymbol{F}\in\rtt_+$, we obtain $(\nabla\boldsymbol{y}_n)^{-1}\to(\nabla\boldsymbol{y})^{-1}$ almost everywhere and the claim follows by the continuity of the map $\boldsymbol{F}\mapsto \boldsymbol{F}^{-1}$ on $\rtt_+$. The almost everywhere convergence of $(\nabla\boldsymbol{y}_n)$ and $(\boldsymbol{m}_n\circ\boldsymbol{y}_n)$, which can be assumed by \eqref{eqn:convergence-composition}, entails the lower semicontinuity of the elastic energy in \eqref{eqn:existence-lsc-el}  by a simple application of the Fatou Lemma. Therefore,  assumption \eqref{eqn:W-polyconvex} is actually not necessary in the regularized setting. 
\end{remark}
}

{\MMM In the next proposition, we consider the piecewise-constant interpolants corresponding to solutions of \eqref{eqn:incremental-minimization-problem-regularization} and we collect their main properties. }
Recall the definition of variation with respect to $\mathcal{D}$ in \eqref{eqn:dissipation-variation}.

\begin{proposition}[Piecewise-constant interpolants]
\label{prop:piecewise-constant-interpolants}
Assume $p>3$ and $\widetilde{\mathcal{Y}}\neq \emptyset$. Suppose that $W$ is continuous and satisfies \eqref{eqn:W-coercivity}--\eqref{eqn:W-polyconvex}, and that the applied loads satisfy \eqref{eqn:applied-loads-regularity}. Let $\Pi=(t_0,\dots,t_N)$ be a partition of $[0,T]$ and let $\boldsymbol{q}^0 \in \widetilde{\mathcal{Q}}$ satisfy \eqref{eqn:stable-0-regularization}. Let $(\boldsymbol{q}^1,\dots,\boldsymbol{q}^N)\in \widetilde{\mathcal{Q}}^N$ be a solution of the regularized incremental minimization problem \eqref{eqn:incremental-minimization-problem-regularization} and define the (right-continuous) piecewise-constant interpolant $\boldsymbol{q}_\Pi \colon [0,T]\to \widetilde{\mathcal{Q}}$ as
\begin{equation}
    \label{eqn:pc-interpolant}
    \boldsymbol{q}_\Pi (t)\coloneqq
    \begin{cases}
    \boldsymbol{q}^{i-1} & \text{if $t \in [t_{i-1},t_i)$ for some $i=1,\dots,N$,}\\
    \boldsymbol{q}^N & \text{if $t=T$.}
    \end{cases}
\end{equation}
Then, the following holds:
\begin{equation}
    \label{eqn:pc-interpolants-stability}
    \forall t \in \Pi,\:\:\forall\, \widehat{\boldsymbol{q}}\in \widetilde{\mathcal{Q}},\quad \widetilde{\mathcal{E}}(t,\boldsymbol{q}_\Pi(t))\leq \widetilde{\mathcal{E}}(t,\widehat{\boldsymbol{q}})+\mathcal{D}(\boldsymbol{q}_\Pi(t),\widehat{\boldsymbol{q}}),
\end{equation}
\begin{equation}
    \label{eqn:pc-interpolants-energy-inequality}
    \forall s,t\in \Pi:\:s<t,\quad \widetilde{\mathcal{E}}(t,\boldsymbol{q}_\Pi(t))-\widetilde{\mathcal{E}}(s,\boldsymbol{q}_\Pi(s))+\var_{\mathcal{D}}(\boldsymbol{q}_\Pi;[s,t])\leq \int_s^t \partial_t \widetilde{\mathcal{E}}(\tau,\boldsymbol{q}_\Pi(\tau))\,\d\tau, 
\end{equation}
\begin{equation}
    \label{eqn:pc-interpolants-a-priori-estimate}
    \forall t \in [0,T],\quad \widetilde{\mathcal{E}}(t,\boldsymbol{q}_\Pi(t))+M+\var_{\mathcal{D}}(\boldsymbol{q}_\Pi;[0,t])\leq (\widetilde{\mathcal{E}}(0,\boldsymbol{q}^0)+M) e^{L t}.
\end{equation}
\end{proposition}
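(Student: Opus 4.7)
The plan is to derive the three claims by unpacking the definition \eqref{eqn:pc-interpolant} of $\boldsymbol{q}_\Pi$ and combining the three discrete properties supplied by Proposition \ref{prop:solutions-imp-regularization} with the Gronwall-type bound \eqref{eqn:total-energy-regularized-gronwall}. Observe first that, for every $i=1,\dots,N-1$, the point $t_i$ belongs to $[t_i,t_{i+1})$, so $\boldsymbol{q}_\Pi(t_i)=\boldsymbol{q}^i$; moreover $\boldsymbol{q}_\Pi(0)=\boldsymbol{q}^0$ and $\boldsymbol{q}_\Pi(T)=\boldsymbol{q}^N$. Thus \eqref{eqn:pc-interpolants-stability} is a direct rewriting of \eqref{eqn:stable-0-regularization} for $t=0$ and of \eqref{eqn:imp-stability-regularization} for $t=t_i$, $i\geq 1$.

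Next I would address the discrete energy inequality \eqref{eqn:pc-interpolants-energy-inequality}. Given $s=t_j$ and $t=t_k$ in $\Pi$ with $j<k$, I sum \eqref{eqn:imp-energy-inequality-regularization} over $i=j+1,\dots,k$ to get
\begin{equation*}
    \widetilde{\mathcal{E}}(t_k,\boldsymbol{q}^k)-\widetilde{\mathcal{E}}(t_j,\boldsymbol{q}^j)+\sum_{i=j+1}^k\mathcal{D}(\boldsymbol{q}^{i-1},\boldsymbol{q}^i)\leq \sum_{i=j+1}^k\int_{t_{i-1}}^{t_i}\partial_t\widetilde{\mathcal{E}}(\tau,\boldsymbol{q}^{i-1})\,\d\tau=\int_{t_j}^{t_k}\partial_t\widetilde{\mathcal{E}}(\tau,\boldsymbol{q}_\Pi(\tau))\,\d\tau,
\end{equation*}
where the last equality uses $\boldsymbol{q}_\Pi(\tau)=\boldsymbol{q}^{i-1}$ on $[t_{i-1},t_i)$. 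It then remains to identify $\var_\mathcal{D}(\boldsymbol{q}_\Pi;[t_j,t_k])$ with the sum on the left: since $\boldsymbol{q}_\Pi$ is constant on each subinterval $[t_{i-1},t_i)$, the triangle-like inequality $\mathcal{D}(\boldsymbol{q}_\Pi(\sigma_m),\boldsymbol{q}_\Pi(\sigma_{m-1}))\leq \sum_{i}\mathcal{D}(\boldsymbol{q}^{i-1},\boldsymbol{q}^i)$ over intermediate jumps of the partition shows that $\mathcal{D}$ is subadditive with respect to refinements, so any partition of $[t_j,t_k]$ gives total less than or equal to $\sum_{i=j+1}^k\mathcal{D}(\boldsymbol{q}^{i-1},\boldsymbol{q}^i)$, and equality is attained by the partition $\Pi\cap[t_j,t_k]$. (The triangle inequality for $\mathcal{D}$ itself follows from the definition \eqref{eqn:dissipation-distance} and the pointwise triangle inequality in $\R^3$.)

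Finally, I would establish \eqref{eqn:pc-interpolants-a-priori-estimate}. Let $t\in[0,T]$ and pick $i\in\{1,\dots,N\}$ such that $t\in[t_{i-1},t_i)$ (the case $t=T$ is analogous), so $\boldsymbol{q}_\Pi(t)=\boldsymbol{q}^{i-1}$ and $\var_\mathcal{D}(\boldsymbol{q}_\Pi;[0,t])=\sum_{j=1}^{i-1}\mathcal{D}(\boldsymbol{q}^{j-1},\boldsymbol{q}^j)$ by the argument above. Applying \eqref{eqn:total-energy-regularized-gronwall} on $[t_{i-1},t]$ to $\boldsymbol{q}=\boldsymbol{q}^{i-1}$, and then using $e^{L(t-t_{i-1})}\geq 1$ to absorb the variation term,
\begin{equation*}
    \widetilde{\mathcal{E}}(t,\boldsymbol{q}^{i-1})+M+\sum_{j=1}^{i-1}\mathcal{D}(\boldsymbol{q}^{j-1},\boldsymbol{q}^j)\leq e^{L(t-t_{i-1})}\Bigl(\widetilde{\mathcal{E}}(t_{i-1},\boldsymbol{q}^{i-1})+M+\sum_{j=1}^{i-1}\mathcal{D}(\boldsymbol{q}^{j-1},\boldsymbol{q}^j)\Bigr),
\end{equation*}
and the discrete estimate \eqref{eqn:imp-a-priori-estimate-regularization} applied with index $i-1$ controls the bracket by $(\widetilde{\mathcal{E}}(0,\boldsymbol{q}^0)+M)e^{Lt_{i-1}}$, yielding the desired bound $(\widetilde{\mathcal{E}}(0,\boldsymbol{q}^0)+M)e^{Lt}$. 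No serious obstacle is expected here; the main care is purely bookkeeping in handling the right-continuous convention of $\boldsymbol{q}_\Pi$, the identification of its total variation as a piecewise constant $\mathcal{Q}$-valued map, and the correct use of Gronwall on the last (non-partition) subinterval.
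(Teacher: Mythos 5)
Your proof is correct and follows essentially the same route as the paper: the stability and discrete energy inequality are read off from Proposition \ref{prop:solutions-imp-regularization} (with the summation over $i$ you make explicit), and the a priori estimate is obtained exactly as in the paper by applying the Gronwall bound \eqref{eqn:total-energy-regularized-gronwall} on the final subinterval $[t_{i-1},t]$ and then the discrete estimate \eqref{eqn:imp-a-priori-estimate-regularization}. The only extra content is that you spell out the identification $\var_{\mathcal{D}}(\boldsymbol{q}_\Pi;[t_j,t_k])=\sum_{i=j+1}^k\mathcal{D}(\boldsymbol{q}^{i-1},\boldsymbol{q}^i)$ and the right-continuity bookkeeping, which the paper treats as immediate.
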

\begin{proof}
Claims \eqref{eqn:pc-interpolants-stability}--\eqref{eqn:pc-interpolants-energy-inequality} follow immediately from \eqref{eqn:imp-stability-regularization}--\eqref{eqn:imp-energy-inequality-regularization}, respectively. We prove \eqref{eqn:pc-interpolants-a-priori-estimate}. Let $t \in [0,T]$ and let $i \in \{1,\dots,N\}$ be such that $t_{i-1}\leq t<t_i$. In this case, we have
\begin{equation*}
    \boldsymbol{q}_\Pi(t)=\boldsymbol{q}^{i-1},\qquad \var_{\mathcal{D}}(\boldsymbol{q}_\Pi;[0,t])=\sum_{j=1}^{i-1} \mathcal{D}(\boldsymbol{q}^{j-1},\boldsymbol{q}^j).
\end{equation*}
Thus, using \eqref{eqn:total-energy-regularized-gronwall} and \eqref{eqn:imp-a-priori-estimate-regularization}, we compute
\begin{equation}
    \begin{split}
    \widetilde{\mathcal{E}}(t,\boldsymbol{q}_{\Pi}(t))+M +\var_{\mathcal{D}}(\boldsymbol{q}_\Pi;[0,t])&\leq \left( \widetilde{\mathcal{E}}(t_{i-1},\boldsymbol{q}^{i-1})+M  \right)e^{L(t-t_{i-1})}+\sum_{j=1}^{i-1} \mathcal{D}(\boldsymbol{q}^{j-1},\boldsymbol{q}^j)\\
    &\leq \left (\widetilde{\mathcal{E}}(t_{i-1},\boldsymbol{q}_{i-1})+M+\sum_{j=1}^{i-1} \mathcal{D}(\boldsymbol{q}^{j-1},\boldsymbol{q}^j)    \right ) e^{L(t-t_{i-1})}\\
    &\leq \left ( \widetilde{\mathcal{E}}(0,\boldsymbol{q}^0)+M  \right) e^{L t}.
    \end{split}
\end{equation}
\end{proof}

In the proof of Theorem \ref{thm:existence-energetic-solution}, we will use the following version of the Helly Selection Principle, {\MMM which is a special case of \cite[Theorem 3.2]{mainik.mielke05}}.

\begin{lemma}[Helly Selection Principle]
\label{lem:helly}
Let $Z$ be a Banach space and let  $\mathcal{K}\subset Z$ be compact.
Let 
$(\boldsymbol{z}_n)\subset BV([0,T];Z)$ be  such that for every $n \in \N$ there holds
\begin{equation}
    \label{eqn:helly-compact}
    \forall t \in [0,T],\:\:\boldsymbol{z}_n(t)\in \mathcal{K}
\end{equation}
and
\begin{equation}
    \label{eqn:helly-variation}
    \sup_{n \in \N}\var(\boldsymbol{z}_n;[0,T])<+\infty. 
\end{equation}
Then, there exist a subsequence $(\boldsymbol{z}_{n_k})$ and a map $\boldsymbol{z}\in BV([0,T];Z)$ such that there holds:
\begin{equation}
    \label{eqn:helly-2}
    \forall t \in [0,T],\quad \text{$\boldsymbol{z}_{n_k}(t)\to\boldsymbol{z}(t)$ in $Z$}.
\end{equation}
\end{lemma}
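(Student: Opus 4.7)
The plan is to combine the classical (scalar) Helly theorem for monotone bounded functions with a diagonal extraction exploiting compactness of $\mathcal{K}$, and then bootstrap to all $t\in[0,T]$ via the variation control. Introduce the auxiliary functions $V_n\colon [0,T]\to [0,C]$ defined by $V_n(t)\coloneqq \var(\boldsymbol{z}_n;[0,t])$. These are non-decreasing and uniformly bounded by \eqref{eqn:helly-variation}. By the classical scalar Helly theorem (proved by diagonalization on rationals), there exists a subsequence, not relabeled, such that $V_n(t)\to V(t)$ for every $t\in [0,T]$, where $V$ is non-decreasing and bounded. Let $J\subset [0,T]$ denote the at most countable set of discontinuities of $V$, and fix a countable dense set $D\subset [0,T]$ containing $J\cup\{0,T\}$.

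Next I would extract pointwise convergence on $D$. Enumerating $D=\{s_j\}_{j\in\N}$ and using that $\boldsymbol{z}_n(s_j)\in\mathcal{K}$ with $\mathcal{K}\subset Z$ compact, a standard Cantor diagonal argument produces a further (not relabeled) subsequence $(\boldsymbol{z}_{n_k})$ and a map $\boldsymbol{z}\colon D\to \mathcal{K}$ such that $\boldsymbol{z}_{n_k}(s_j)\to \boldsymbol{z}(s_j)$ in $Z$ for every $j\in \N$. The main obstacle is then the extension of $\boldsymbol{z}$ to $[0,T]\setminus D$, which is uncountable; the key leverage is the triangle-type inequality
\begin{equation*}
\|\boldsymbol{z}_n(t)-\boldsymbol{z}_n(s)\|_Z \leq V_n(t)-V_n(s), \qquad 0\leq s\leq t\leq T,
\end{equation*}
which follows directly from the definition of $\var$. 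Fix $t\in [0,T]\setminus D$; by construction $V$ is continuous at $t$. Given $\varepsilon>0$, choose $s_-,s_+\in D$ with $s_-<t<s_+$ and $V(s_+)-V(s_-)<\varepsilon$. By compactness of $\mathcal{K}$ any subsequence of $(\boldsymbol{z}_{n_k}(t))$ has a further subsequence converging in $Z$; if $a,b\in \mathcal{K}$ are two such accumulation points, passing to the limit along the respective subsequences gives
\begin{equation*}
\|a-\boldsymbol{z}(s_-)\|_Z\leq \liminf_k \bigl(V_{n_k}(t)-V_{n_k}(s_-)\bigr)\leq V(s_+)-V(s_-)<\varepsilon,
\end{equation*}
and analogously for $b$, so $\|a-b\|_Z<2\varepsilon$. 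Since $\varepsilon>0$ is arbitrary, $a=b$, hence the whole sequence $(\boldsymbol{z}_{n_k}(t))$ converges in $Z$ to a well-defined element $\boldsymbol{z}(t)\in \mathcal{K}$. This establishes \eqref{eqn:helly-2}.

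Finally, to check that $\boldsymbol{z}\in BV([0,T];Z)$, for any partition $0=t_0<t_1<\dots<t_N=T$ the pointwise convergence and \eqref{eqn:helly-variation} yield
\begin{equation*}
\sum_{i=1}^N \|\boldsymbol{z}(t_i)-\boldsymbol{z}(t_{i-1})\|_Z = \lim_k \sum_{i=1}^N \|\boldsymbol{z}_{n_k}(t_i)-\boldsymbol{z}_{n_k}(t_{i-1})\|_Z \leq \liminf_k \var(\boldsymbol{z}_{n_k};[0,T])\leq C,
\end{equation*}
and taking the supremum over partitions gives $\var(\boldsymbol{z};[0,T])\leq C<+\infty$. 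The most delicate point is the argument at non-dense times $t\notin D$: it is crucial that the jump set of $V$ was absorbed into $D$, so that $V$ is genuinely continuous at each $t$ where convergence is not yet known, allowing the squeeze between $s_-$ and $s_+$ to force uniqueness of the accumulation point.
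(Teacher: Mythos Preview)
Your proof is correct and follows the standard route to Helly-type selection principles in metric (or Banach) spaces: scalar Helly for the variation functions $V_n$, diagonal extraction on a countable dense set enlarged by the jump set of the limiting $V$, and then a squeeze argument at continuity points of $V$ to pin down the limit at the remaining times. The final BV bound via lower semicontinuity of the variation is also fine.

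There is nothing to compare against in the paper itself: the lemma is stated without proof and simply attributed to \cite[Theorem 5.1]{mielke}. Your argument is precisely the kind of proof one finds in that reference (and in the monograph \cite{mielke.roubicek}), so in effect you have reproduced the cited result.
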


The proof of Theorem \ref{thm:existence-energetic-solution} follows rigorously the well-established scheme introduced in \cite{francfort.mielke}. Therefore, we simply show how to lead the argument back to the original scheme. For additional details we refer to \cite[Theorem 5.2]{mielke}.

\begin{proof}[Proof of Theorem \ref{thm:existence-energetic-solution}]
Following \cite[Theorem 5.2]{mielke}, we subdivide the proof into five steps.

\textbf{Step 1 (A priori estimates).} Let $(\Pi_n)$ be a sequence of partitions of $[0,T]$ with $\Pi_n=(t_0^n,\dots,t_{N_n}^n)$ such that $|\Pi_n|\coloneqq \max \{ t_i^n-t_{i-1}^n: \: i=1, \dots,N_n \}\to 0$, as $n \to \infty$. For every $n \in \N$, by Proposition \ref{prop:solutions-imp-regularization}, the incremental minimization problem \eqref{eqn:incremental-minimization-problem-regularization} determined by $\Pi_n$ admits a solution and, by Proposition \ref{prop:piecewise-constant-interpolants}, the corresponding piecewise-constant interpolant $\boldsymbol{q}_n \coloneqq \boldsymbol{q}_{\Pi_n}$ with $\boldsymbol{q}_n=(\boldsymbol{y}_n,\boldsymbol{m}_n)$ defined according to \eqref{eqn:pc-interpolant} satisfies the following:
\begin{equation}
    \label{eqn:pc-interpolants-stability-n}
    \forall t \in \Pi_n,\:\:\forall\, \widehat{\boldsymbol{q}}\in \widetilde{\mathcal{Q}},\quad \widetilde{\mathcal{E}}(t,\boldsymbol{q}_n(t))\leq \widetilde{\mathcal{E}}(t,\widehat{\boldsymbol{q}})+\mathcal{D}(\boldsymbol{q}_n(t),\widehat{\boldsymbol{q}}),
\end{equation}
\begin{equation}
    \label{eqn:pc-interpolants-energy-inequality-n}
    \forall s,t\in \Pi_n:\:s<t,\quad \widetilde{\mathcal{E}}(t,\boldsymbol{q}_n(t))-\widetilde{\mathcal{E}}(s,\boldsymbol{q}_n(s))+\var_{\mathcal{D}}(\boldsymbol{q}_n;[s,t])\leq \int_s^t \partial_t \widetilde{\mathcal{E}}(\tau,\boldsymbol{q}_n(\tau))\,\d\tau,
\end{equation}
\begin{equation}
    \label{eqn:pc-interpolants-a-priori-estimate-n}
    \forall t \in [0,T],\quad \widetilde{\mathcal{E}}(t,\boldsymbol{q}_n(t))+M+\var_{\mathcal{D}}(\boldsymbol{q}_n;[0,t])\leq (\widetilde{\mathcal{E}}(0,\boldsymbol{q}^0)+M) e^{L t}.
\end{equation}
In particular, from \eqref{eqn:pc-interpolants-a-priori-estimate-n}, we deduce that, for every $n \in \N$, there hold
\begin{equation}
    \label{eqn:a-priori-bound}
    \sup_{n \in \N} \left \{\sup_{t \in [0,T]}\widetilde{\mathcal{E}}(t,\boldsymbol{q}_n(t))+\var_{\mathcal{D}}(\boldsymbol{q}_n;[0,T]) \right \} \leq C
\end{equation}
for some constant $C(\boldsymbol{q}^0,M,L,T)>0$.

\textbf{Step 2 (Selection of subsequences).} From \eqref{eqn:total-energy-coercivity} and \eqref{eqn:a-priori-bound}, for every $n \in \N$ and $t \in [0,T]$ we have
\begin{align*}
    &\hspace*{2mm}||\nabla \boldsymbol{y}_n(t)||_{L^p(\Omega;\rtt)}\leq C,\qquad \hspace{7mm} ||\nabla \boldsymbol{m}_n(t)||_{L^2(\Omega^{\boldsymbol{y}_n(t)};\rtt)}\leq C,\\
    &||\gamma(\det \nabla \boldsymbol{y}_n(t))||_{L^1(\Omega)}\leq C, \qquad ||D(\cof\,\nabla \boldsymbol{y}_n(t))||_{\mathcal{M}_{\rm b}(\Omega;\R^{3 \times 3 \times 3})}\leq C.
\end{align*}
This shows that each maps of the sequence $(\boldsymbol{q}_n)$ takes values in the set $\widetilde{\mathcal{K}}\subset \widetilde{\mathcal{Q}}$ defined as
\begin{equation*}
    \begin{split}
    \widetilde{\mathcal{K}}\coloneqq \bigg \{ \widehat{\boldsymbol{q}}=(\widehat{\boldsymbol{y}},\widehat{\boldsymbol{m}})\in \widetilde{\mathcal{Q}}:\quad &||\nabla \widehat{\boldsymbol{y}}||_{L^p(\Omega;\rtt)}\leq C,\quad \hspace*{3mm}||\nabla \widehat{\boldsymbol{m}}||_{L^2(\Omega^{\widehat{\boldsymbol{y}}};\rtt)}\leq C,\\
    &||\gamma(\det \nabla \widehat{\boldsymbol{y}})||_{L^1(\Omega)}\leq C, \quad ||D(\cof\,\nabla \widehat{\boldsymbol{y}})||_{\mathcal{M}_{\rm b}(\Omega;\R^{3 \times 3 \times 3})}\leq C   \bigg \}.
    \end{split}
\end{equation*}
Applying Proposition \ref{prop:compactness} and arguing as in the proof of Proposition \ref{prop:solutions-imp-regularization}, we prove the following:
\begin{equation}
    \label{eqn:compactness-property}
    \begin{split}
    &\text{for every $(\widehat{\boldsymbol{q}}_n)\subset \widetilde{\mathcal{K}}$ with $\widehat{\boldsymbol{q}}_n=(\widehat{\boldsymbol{y}}_n,\widehat{\boldsymbol{m}}_n)$ there exist  $(\widehat{\boldsymbol{q}}_{n_k})$ and $\widehat{\boldsymbol{q}}\in \widetilde{\mathcal{Q}}$ with $\widehat{\boldsymbol{q}}=(\widehat{\boldsymbol{y}},\widehat{\boldsymbol{m}})$}\\
    &\hspace*{2mm}\text{such that $\widehat{\boldsymbol{q}}_{n_k}\to \widehat{\boldsymbol{q}}$ in $\widetilde{\mathcal{Q}}$, \hspace{2mm} $\widehat{\boldsymbol{m}}_{n_k}\circ \widehat{\boldsymbol{y}}_{n_k} \to \widehat{\boldsymbol{m}}\circ \widehat{\boldsymbol{y}}$ in $L^{\RRR a}(\Omega;\R^3)$ for every $1 \leq {\RRR a} < \infty$,}\\
    &\hspace{2mm}\text{{\MMM $\mathcal{Z}(\widehat{\boldsymbol{q}}_{n_k})\to\mathcal{Z}(\widehat{\boldsymbol{q}})$ in $L^1(\Omega;\R^3)$} \hspace{1mm}and\hspace{1mm} $D(\cof\,\nabla \widehat{\boldsymbol{y}}_{n_k})\wks D(\cof\,\nabla \widehat{\boldsymbol{y}})$ in $\mathcal{M}_{\rm b}(\Omega;\R^{3 \times 3 \times 3})$.}
    \end{split}
\end{equation}
In particular, {\MMM note the strong convergence of $(\mathcal{Z}(\widehat{\boldsymbol{q}}_{n_k}))$ in $L^1(\Omega;\R^3)$ which is deduced from the strong convergence of $(\cof\,\nabla\widehat{\boldsymbol{y}}_{n_k})$ in $L^{3/2}(\Omega;\R^3)$ (see \eqref{eqn:auxilliary-BV}) and the  strong convergence of $(\boldsymbol{m}_{n_k}\circ \boldsymbol{y}_{n_k})$ in $L^3(\Omega;\R^3)$. Thus,}
the set 
\begin{equation*}
    \mathcal{K}\coloneqq \left \{\mathcal{Z}(\widehat{\boldsymbol{q}}):\:\:  \widehat{\boldsymbol{q}}\in \widetilde{\mathcal{K}} \right \}
\end{equation*}
is compact with respect to the strong topology of $L^1(\Omega;\R^3)$. 
Now, consider the sequence $(\boldsymbol{z}_n)\subset BV([0,T];L^1(\Omega;\R^3))$ with $\boldsymbol{z}_n(t)\coloneqq \mathcal{Z}(\boldsymbol{q}_n(t))$ for every $t \in [0,T]$.
Setting $Z=L^1(\Omega;\R^3)$, the sequence $(\boldsymbol{z}_n)$ satisfies \eqref{eqn:helly-compact} by construction, as the the maps of the sequence $(\boldsymbol{q}_n)$ take values in $\widetilde{\mathcal{K}}$, while  \eqref{eqn:helly-variation} holds in view of \eqref{eqn:a-priori-bound}. Therefore, by Lemma \ref{lem:helly}, there exist a subsequence $(\boldsymbol{z}_{n_k})$ and a map $\boldsymbol{z}\in BV([0,T];L^1(\Omega;\R^3))$ such that \eqref{eqn:helly-2} holds.

For every $n \in \N$, define $\vartheta_n \colon [0,T] \to \R$ by setting $\vartheta_n(t)\coloneqq \partial_t \widetilde{\mathcal{E}}(t,\boldsymbol{q}_n(t))$. By \eqref{eqn:time-derivative-total-energy-estimate} and \eqref{eqn:a-priori-bound}, the sequence $(\vartheta_n)$ is bounded in $L^\infty(0,T)$. Hence, up to subsequences, $\vartheta_n \wks \vartheta$ in $L^\infty(0,T)$ for some $\vartheta \in L^\infty(0,T)$. If we define $\bar{\vartheta} \colon [0,T]\to \R$ as $\bar{\vartheta}(t)\coloneqq \limsup_{n \to \infty} \vartheta_n(t)$, then $\bar{\vartheta}\in L^\infty(0,T)$ and, by the Fatou Lemma, $\vartheta \leq \bar{\vartheta}$.

Finally, for every fixed $t \in [0,T]$, exploiting \eqref{eqn:compactness-property}, we select a subsequence  $(\boldsymbol{q}_{n_{k_\ell^t}}(t))$ {\MMM and some $\boldsymbol{q}(t)\in \widetilde{\mathcal{Q}}$ with $\boldsymbol{q}(t)=(\boldsymbol{y}(t),\boldsymbol{m}(t))$} such that
\begin{equation}
\label{eqn:t-dependent-subsequence}
    \begin{split}
        &\hspace{4mm}\text{$\boldsymbol{q}_{n_{k_\ell^t}}(t) \to \boldsymbol{q}(t)$ in $\widetilde{\mathcal{Q}}$}, \quad \text{$\boldsymbol{m}_{n_{k_\ell^t}} \circ \boldsymbol{y}_{n_{k_\ell^t}}(t)\to \boldsymbol{m}\circ \boldsymbol{y}(t)$ in $L^a(\Omega;\R^3)$ for every $1 \leq a<\infty$},\\
        &\text{{\MMM $\mathcal{Z}(\boldsymbol{q}_{n_{k_\ell^t}}(t))\to \mathcal{Z}(\boldsymbol{q}(t))$} in $L^1(\Omega;\rtt)$, \: $D(\cof\, \nabla \boldsymbol{y}_{n_{k_\ell^t}}(t))\wks D(\cof\,\nabla \boldsymbol{y}(t))$ in $\mathcal{M}_{\rm b}(\Omega;\R^{3 \times 3 \times 3})$.}
    \end{split}
\end{equation}
The candidate solution $\boldsymbol{q}\colon [0,T]\to \widetilde{\mathcal{Q}}$ is pointwise defined by this procedure.
Note that, by \eqref{eqn:helly-2}, there holds $\boldsymbol{z}(t)=\mathcal{Z}(\boldsymbol{q}(t))$.
{\MMM Also, we choose the subsequence in \eqref{eqn:t-dependent-subsequence} in order to have $\vartheta_{n_{k_\ell^t}}(t)\to \bar{\vartheta}(t)$.}

\textbf{Step 3 (Stability of the limiting function).} We claim that $\boldsymbol{q}$ satisfies \eqref{eqn:energetic-solution-stability}. Fix $t \in [0,T]$.
Henceforth, for simplicity, we will replace the subscripts $n_k$ and $n_{k_\ell^t}$ by $k$ and $k_\ell^t$, respectively. For every $k \in \N$, set $\tau_k(t)\coloneqq \max \{s \in \Pi_k:\:s \leq t\}$ and note that $\tau_k(t)\to t$, since $|\Pi_k|\to 0$. Then, $\boldsymbol{q}_k(t)=\boldsymbol{q}_k(\tau_k(t))$ so that, by \eqref{eqn:imp-stability-regularization}, we have
\begin{equation}
    \label{eqn:stability-t-1}
    \forall\, \widehat{\boldsymbol{q}}\in \widetilde{\mathcal{Q}},\quad \widetilde{\mathcal{E}}(\tau_k(t),\boldsymbol{q}_k(t))\leq \widetilde{\mathcal{E}}(\tau_k(t),\widehat{\boldsymbol{q}})+\mathcal{D}(\boldsymbol{q}_k(t),\widehat{\boldsymbol{q}}).
\end{equation}
Recall \eqref{eqn:t-dependent-subsequence}. Arguing as in the proof of Proposition \ref{prop:solutions-imp-regularization} and exploiting the continuity of the applied loads in \eqref{eqn:applied-loads-regularity}, we obtain
\begin{equation}
    \label{eqn:stability-t-2}
    \widetilde{\mathcal{E}}(t,\boldsymbol{q}(t))\leq \liminf_{\ell \to \infty}  \widetilde{\mathcal{E}}(\tau_{k_\ell^t}(t),\boldsymbol{q}_{k_\ell^t}(t)).
\end{equation}
Moreover, by the continuity of the applied loads in \eqref{eqn:applied-loads-regularity}, there holds
\begin{equation}
    \label{eqn:stability-t-3}
    \forall\,\widehat{\boldsymbol{q}}\in \widetilde{\mathcal{Q}},\quad \widetilde{\mathcal{E}}(\tau_{k_\ell^t}(t),\widehat{\boldsymbol{q}})\to \widetilde{\mathcal{E}}(t,\widehat{\boldsymbol{q}}),
\end{equation}
while, as $\boldsymbol{z}_{k_\ell^t}(t)\to \boldsymbol{z}(t)$ in $L^1(\Omega;\R^3)$ and $\boldsymbol{z}(t)=\mathcal{Z}(\boldsymbol{q}(t))$, we have 
\begin{equation}
    \label{eqn:stability-t-4}
    \forall\,\widehat{\boldsymbol{q}}\in \widetilde{\mathcal{Q}},\quad \mathcal{D}(\boldsymbol{q}_{k_\ell^t}(t),\widehat{\boldsymbol{q}})=||\mathcal{Z}(\boldsymbol{q}_{k_\ell^t}(t))-\mathcal{Z}(\widehat{\boldsymbol{q}})||_{L^1(\Omega;\R^3)}\to ||\mathcal{Z}(\boldsymbol{q}(t))-\mathcal{Z}(\widehat{\boldsymbol{q}})||_{L^1(\Omega;\R^3)}=\mathcal{D}(\boldsymbol{q}(t),\widehat{\boldsymbol{q}}).
\end{equation}

Hence, combining \eqref{eqn:stability-t-1}--\eqref{eqn:stability-t-4}, we deduce
\begin{equation*}
    \begin{split}
        \widetilde{\mathcal{E}}(t,\boldsymbol{q}(t))&\leq \liminf_{\ell \to \infty}  \widetilde{\mathcal{E}}(\tau_{k_\ell^t}(t),\boldsymbol{q}_{k_\ell^t}(t))\\
        &\leq \liminf_{\ell \to \infty} \left \{\widetilde{\mathcal{E}}(\tau_{k_\ell^t}(t),\widehat{\boldsymbol{q}})+\mathcal{D}(\boldsymbol{q}_{k_\ell^t}(t),\widehat{\boldsymbol{q}}) \right \}\\
        &=\widetilde{\mathcal{E}}(t,\widehat{\boldsymbol{q}})+\mathcal{D}(\boldsymbol{q}(t),\widehat{\boldsymbol{q}}),
    \end{split}
\end{equation*}
for every $\widehat{\boldsymbol{q}}\in \widetilde{\mathcal{Q}}$, which gives \eqref{eqn:energetic-solution-stability} for $t$ fixed.
\end{proof}

\textbf{Step 4  (Upper energy estimate).} We claim that $\boldsymbol{q}$ satisfies the upper energy estimate
\begin{equation}
    \label{eqn:upper-energy-estimate}
    \forall t \in [0,T], \quad  \widetilde{\mathcal{E}}(t,\boldsymbol{q}(t))+\var_{\mathcal{D}}(\boldsymbol{q};[0,t])\leq \widetilde{\mathcal{E}}(0,\boldsymbol{q}^0)+\int_0^t \partial_t \widetilde{\mathcal{E}}(\tau,\boldsymbol{q}(\tau))\,\d\tau.
\end{equation}
Recall \eqref{eqn:a-priori-bound}. For every $n \in \N$, using \eqref{eqn:total-energy-regularized-gronwall}, we obtain
\begin{equation}
    \label{eqn:gronwall-modulus-continuity}
    \forall s,t\in[0,T], \quad |\widetilde{\mathcal{E}}(t,\boldsymbol{q}_n(t))-\widetilde{\mathcal{E}}(s,\boldsymbol{q}_n(s))|\leq (C+M) \left |e^{L|t-s|}-1 \right |\eqqcolon \rho(t-s), 
\end{equation}
where $\rho(r)\to 0$, as $r \to 0$. 

Fix $t \in [0,T]$, so that $\boldsymbol{q}_n(t)=\boldsymbol{q}_n(\tau_n(t))$ and $\var_{\mathcal{D}}(\boldsymbol{q}_n;[0,t])=\var_{\mathcal{D}}(\boldsymbol{q}_n;[0,\tau_n(t)])$ for every $n \in \N$. Recall the definition of $\vartheta_n$ in Step 2. By \eqref{eqn:gronwall-modulus-continuity} and \eqref{eqn:pc-interpolants-energy-inequality-n}, we have
\begin{equation}
    \label{eqn:upper-energy-estimate-1}
    \begin{split}
        \widetilde{\mathcal{E}}(t,\boldsymbol{q}_n(t))+\var_{\mathcal{D}}(\boldsymbol{q}_n;[0,t])&\leq \widetilde{\mathcal{E}}(\tau_n(t),\boldsymbol{q}_n(\tau_n(t)))+\var_{\mathcal{D}}(\boldsymbol{q}_n;[0,\tau_n(t)])+\rho(|\Pi_n|)\\
        &\leq \widetilde{\mathcal{E}}(0,\boldsymbol{q}^0)+\int_0^{\tau_n(t)} \vartheta_n(\tau)\,\d\tau+\rho(|\Pi_n|),
    \end{split}
\end{equation}
for every $n \in \N$.
Also, by the lower semicontinuity of the total variation, we have
\begin{equation}
    \label{eqn:upper-energy-estimate-2}
    \var_{\mathcal{D}}(\boldsymbol{q};[0,t])=\var(\boldsymbol{z};[0,t])\leq \liminf_{n \to \infty} \var(\boldsymbol{z}_n;[0,t])=\liminf_{n \to \infty} \var_{\mathcal{D}}(\boldsymbol{q}_n;[0,t]),
\end{equation}
as \eqref{eqn:helly-2} holds and $\boldsymbol{z}(s)=\mathcal{Z}(\boldsymbol{q}(s))$ for every $s \in [0,T]$.
Then, from \eqref{eqn:stability-t-2} and \eqref{eqn:upper-energy-estimate-1}--\eqref{eqn:upper-energy-estimate-2}, we deduce
\begin{equation}
\label{eqn:upper-energy-estimate-3}
    \begin{split}
        \widetilde{\mathcal{E}}(t,\boldsymbol{q}(t))+\var_{\mathcal{D}}(\boldsymbol{q};[0,t])&\leq \liminf_{\ell \to \infty} \left \{ \widetilde{\mathcal{E}}(t,\boldsymbol{q}_{k_\ell^t}(t))+ \var_{\mathcal{D}}(\boldsymbol{q}_{k_\ell^t};[0,t]) \right \}\\
        &\leq \widetilde{\mathcal{E}}(0,\boldsymbol{q}^0)+\liminf_{\ell \to \infty} \left \{ \int_0^{\tau_{k_\ell^t}(t)} \vartheta_{k_\ell^t}(\tau)\,\d \tau + \rho(|\Pi_{k_\ell^t}|)  \right\}\\
        &=\widetilde{\mathcal{E}}(0,\boldsymbol{q}^0)+\int_0^t \vartheta(\tau)\,\d\tau\leq \widetilde{\mathcal{E}}(0,\boldsymbol{q}^0)+\int_0^t \bar{\vartheta}(\tau)\,\d\tau,
    \end{split}
\end{equation}
where, in the last line, we used that $\vartheta_{k}\wks \vartheta$ in $L^\infty(0,T)$, $\vartheta \leq \bar{\vartheta}$ and $\rho(|\Pi_{k}|)\to 0$.

We claim that $\bar{\vartheta}(t)=\partial_t \widetilde{\mathcal{E}}(t,\boldsymbol{q}(t))$ for every $t \in (0,T)$. Fix $t \in (0,T)$. Testing \eqref{eqn:energetic-solution-stability} with 
$\boldsymbol{q}_{k_\ell^t}(t)$, we have
\begin{equation*}
    -\widetilde{\mathcal{E}}(t,\boldsymbol{q}_{k_\ell^t}(t))\leq - \widetilde{\mathcal{E}}(t,\boldsymbol{q}(t))+\mathcal{D}(\boldsymbol{q}(t),\boldsymbol{q}_{k_\ell^t}(t))
\end{equation*}
so that, using \eqref{eqn:stability-t-4}, we compute
\begin{equation*}
    \begin{split}
        \limsup_{\ell \to \infty} \widetilde{\mathcal{E}}(t,\boldsymbol{q}_{k_\ell^t}(t))&=-\liminf_{\ell \to \infty} \left (-\widetilde{\mathcal{E}}(t,\boldsymbol{q}_{k_\ell^t}(t))\right )\\
        &\leq -\liminf_{\ell \to \infty} \left (- \widetilde{\mathcal{E}}(t,\boldsymbol{q}(t))+\mathcal{D}(\boldsymbol{q}(t),\boldsymbol{q}_{k_\ell^t}(t)) \right )\leq \widetilde{\mathcal{E}}(t,\boldsymbol{q}(t)).
    \end{split}
\end{equation*}
Given \eqref{eqn:stability-t-2}, we conclude that $\widetilde{\mathcal{E}}(t,\boldsymbol{q}_{k_\ell^t}(t))\to \widetilde{\mathcal{E}}(t,\boldsymbol{q}(t))$. Recalling \eqref{eqn:t-dependent-subsequence}, by \cite[Proposition 5.6]{mielke}, we have $\vartheta_{k_\ell^t}(t)=\partial_t \widetilde{\mathcal{E}}(t,\boldsymbol{q}_{k_\ell^t}(t))\to \partial_t \widetilde{\mathcal{E}}(t,\boldsymbol{q}(t))$ and, as $\vartheta_{k_\ell^t}(t)\to\bar{\vartheta}(t)$, we deduce $\bar{\vartheta}(t)=\partial_t \widetilde{\mathcal{E}}(t,\boldsymbol{q}(t))$. Therefore, 
\eqref{eqn:upper-energy-estimate-3} gives \eqref{eqn:upper-energy-estimate} for fixed $t$.

\textbf{Step 5 (Lower energy estimate).} Finally, we show that $\boldsymbol{q}$ satisfies
\begin{equation}
    \label{eqn:lower-energy-estimate}
    \forall t \in [0,T], \quad  \widetilde{\mathcal{E}}(t,\boldsymbol{q}(t))+\var_{\mathcal{D}}(\boldsymbol{q};[0,t])\geq \widetilde{\mathcal{E}}(0,\boldsymbol{q}^0)+\int_0^t \partial_t \widetilde{\mathcal{E}}(\tau,\boldsymbol{q}(\tau))\,\d\tau,
\end{equation}
which, combined with \eqref{eqn:upper-energy-estimate}, proves \eqref{eqn:energetic-solution-energy-balance}.

Note that, by \eqref{eqn:stability-t-2}, we have $\sup_{t \in [0,T]}\widetilde{\mathcal{E}}(t,\boldsymbol{q}(t))\leq C$. Moreover, the function is $t \mapsto \partial_t \widetilde{\mathcal{E}}(t,\boldsymbol{q}(t))$ belongs to $L^\infty(0,T)$, as it coincides with $\bar{\vartheta}$. Hence, by \cite[Proposition 5.7]{mielke}, for every $s,t \in [0,T]$ with $s<t$ we have
\begin{equation*}
    \widetilde{\mathcal{E}}(t,\boldsymbol{q}(t))+\var_{\mathcal{D}}(\boldsymbol{q};[s,t])\geq \widetilde{\mathcal{E}}(s,\boldsymbol{q}(s))+\int_s^t \partial_t \widetilde{\mathcal{E}}(\tau,\boldsymbol{q}(\tau))\,\d\tau,
\end{equation*}
which in turn yields \eqref{eqn:lower-energy-estimate}.

\section*{Acknowledgements}
The authors acknowledge support
from the Austrian Science Fund (FWF) projects F65, V 662, Y1292,
from the FWF-GA\v{C}R project I 4052/19-29646L, and  
from the OeAD-WTZ project CZ04/2019 (M\v{S}MT\v{C}R 8J19AT013). MK  also thanks   the Center of Advanced Applied Sciences (CAAS), financially supported by the European Regional Development Fund (Project no.
CZ.$02.1.01/0.0/0.0/16_019/0 0 0 0778$.


\begin{thebibliography}{50}

{\MMM
\bibitem{antman.rogers}
S. S. Antman, R. C. Rogers.
\newblock{Steady-state problems of nonlinear electro-magneto-thermoelasticity.}
\newblock{\em Arch. Ration. Mech. Anal.} \textbf{95} (1986), 279--323.
}

\bibitem{ball}
J. M. Ball.
\newblock{Global invertibility of Sobolev functions and the interpenetration of matter.}
\newblock{\em Proc. Roy. Soc. Edinburgh
Sect. A} \textbf{88} (1981), 315--328.

\bibitem{ball.currie.olver}
J. M. Ball, J. C. Currie, P. J. Olver.
\newblock{Null Lagrangians, weak continuity, and variational problems of arbitrary order.}
\newblock{\em J. Funct. Anal.} \textbf{41} (1981), 135--174.


\bibitem{barchiesi.desimone}
M. Barchiesi, A. De Simone.
\newblock{Frank energy for nematic elastomers: a nonlinear model.}
\newblock{\em ESAIM Control Optim. Calc.
Var.} \textbf{2} (2015), 372--377.

\bibitem{barchiesi.henao.moracorral}
M. Barchiesi, D. Henao, C. Mora-Corral.
\newblock{Local Invertibility in Sobolev spaces with applications to nematic elastomers and magnetoelasticity.}
\newblock{\em Arch. Ration. Mech. Anal.} \textbf{224} (2017), 743--816.

\bibitem{benesova.kruzik.schloemerkemper}
B. Bene\v{s}ov\'{a}, M. Kru\v{z}\'{\i}k, A. Schl\"{o}merkemper.
\newblock{A note on locking materials and gradient polyconvexity.}
\newblock{\em Math. Mod. Meth. Appl. Sci.} \textbf{28} (2018), 2367--2401.

{\MMM 
\bibitem{bouchala.hencl.molchanova}
O. Bouchala, S. Hencl, A. Molchanova. Injectivity almost everywhere for weak limits of Sobolev homeomorphisms. {\em J.
Funct. Anal.} \textbf{279} (2020), 108658.
}


\bibitem{bresciani}
M. Bresciani. Linearized von K\'arm\'an theory for incompressible magnetoelastic plates. {\MMM {\em Math. Mod. Meth. Appl. Sci.} \textbf{31} (2021), 1987--2037.}

\bibitem{brown}
W. F. Brown. {\em Magnetoelastic Interactions}. Springer, Berlin, 1966.


\bibitem{ciarlet.necas}
P. G. Ciarlet, J. Ne\v{c}as.
\newblock{Injectivity and self-contact in nonlinear elasticity.}
\newblock{\em Arch. Ration. Mech. Anal.} \textbf{97} (1987), 173--188.





\bibitem{davoli.difratta}
E. Davoli, G. Di Fratta. Homogenization of chiral magnetic materials. - A mathematical evidence of Dzyaloshinskii's predictions on helical structures.
{\em J. Nonlinear Sci.} \textbf{30} (2020), 1229--1262.



\bibitem{davoli.kruzik.piovano.stefanelli}
E. Davoli, M. Kru\v{z}\'{\i}k, P. Piovano, U. Stefanelli.
\newblock{Magnetoelastic thin films at large strains}.
\newblock{\em Cont. Mech.  Thermodyn.} \textbf{33} (2021), 327--341.


\bibitem{desimone.dolzmann}
A. DeSimone, G. Dolzmann. Existence of minimizers for a variational problem in
two-dimensional nonlinear magnetoelasticity. {\em Arch. Ration. Mech. Anal}. \textbf{144} (1998), 107--120. 

\bibitem{desimone.james} A. DeSimone, R. D. James. A constrained theory of magnetoelasticity. {\em J. Mech. Phys.
Solids}, \textbf{50} (2002), 283–320.

{\MMM
\bibitem{desimone.podioguidugli}
A. DeSimone, P. Podio-Guidugli.
\newblock{On the continuum theory of deformable ferromagnetic solids.}
\newblock{\em Arch. Ration. Mech. Anal.} \textbf{136} (1996), 201--233.
}


\bibitem{dzyaloshinsky}
I. Dzyaloshinsky. A thermodynamic theory of ``weak" ferromagnetism of antiferromagnetics. {\em J. Phys. Chem. Solids}, \textbf{4} (1958), 241--255.

{\MMM
\bibitem{eisen}
G. Eisen. A selection lemma for sequences of measurable sets, and lower semicontinuity of multiple integrals. {\em Manuscripta Math.} \textbf{27} (1979), 73–-79.
}

{\MMM
\bibitem{federer}
H. Federer.
\newblock{\em Geometric Measure Theory.}
Springer, New York, 1969.
}


\bibitem{fonseca.leoni}
I. Fonseca, G. Leoni.
\newblock{ \em Modern Methods in the Calculus of Variations: $L^p$ spaces}.
\newblock{Springer, New York, 2007.}

{\MMM
\bibitem{fonseca.gangbo}
I. Fonseca, W. Gangbo.
\newblock{Local invertibility of Sobolev functions.}
\newblock{\em SIAM J. Math. Anal.} \textbf{26} (1995), 280--304.

\bibitem{fonseca.gangbo2}
I. Fonseca, W. Gangbo.
\newblock{\em Degree Theory in Analysis and Applications.}
Oxford University Press, New York, 1995.
}


\bibitem{francfort.mielke}
G. Francfort, A. Mielke.
\newblock{Existence results for a class of rate-independent material models with nonconvex elastic energies.}
\newblock{\em J. Reine Angew. Math.} \textbf{595} (2004), 55--91.



\bibitem{grandi.kruzik.mainini.stefanelli}
D. Grandi, M. Kru\v{z}\'ik, E. Mainini, U. Stefanelli. A phase-field approach to Eulerian interfacial energies.
{\em Arch. Ration. Mech. Anal.} \textbf{234} (2019), 351--373.

\bibitem{hajlasz}
P. Haj\l{}asz. 
\newblock{Change of variables formula under minimal assumptions.}
\newblock{\em Colloq. Math.} \textbf{64} no. 1 (1993). 

\bibitem{henao.moracorral}
D. Henao, C. Mora-Corral.
\newblock{Lusin’s condition and the distributional determinant for
deformations with finite energy.}
\newblock{\em Adv. Calc. Var.} \textbf{5} (2012), 355--409.

\bibitem{henao.moracorral2}
D. Henao, C. Mora-Corral.
\newblock{Regularity of inverses of Sobolev deformations with
finite surface energy.}
\newblock{\em J. Funct. Anal.} \textbf{268} (2015), 2356--2378.

\bibitem{henao.moracorral.oliva}
D. Henao, C. Mora-Corral, M. Oliva.
\newblock{Global invertibility of Sobolev maps.}
\newblock{\em Adv. Calc. Var.} \textbf{14} (2021), 207--230.

{\MMM
\bibitem{henao.stroffolini}
D. Henao, B. Stroffolini.
\newblock{Orlicz-Sobolev nematic elastomers.}
\newblock{\em Nonlinear Anal.} \textbf{194} (2020), 111513.
}



\bibitem{james.kinderlehrer} R. D. James, D. Kinderlehrer. Theory of magnetostriction with applications to $Tb_x Dy_{1-x}Fe_2$ {\em Philos. Mag. B}. \textbf{68} (1993), 237--274.

{\MMM
\bibitem{kankanala.triantafyllidis}
S. V. Kankanala, N. Triantafyllidis.
\newblock{On finitely strained magnetorheological elastomers.}
\newblock{\em J. Mech. Phys. Solids} \textbf{52} (2004), 2869--2908.
}

\bibitem{kromer}
S. Kr\"{o}mer.
\newblock{Global invertibility for orientation-preserving Sobolev maps via invertibility on or near the boundary.}
\newblock{\em Arch. Ration. Mech. Anal.} \textbf{238} (2020), 1113--1155.

\bibitem{kruzik.roubicek}
M. Kru\v{z}\'{\i}k, T. Roub\'{\i}\v{c}ek.
\newblock{ \em Mathematical Methods in Continuum Mechanics of Solids.}
\newblock{Springer, Cham, 2019.}

\bibitem{kruzik.stefanelli.zanini}
M. Kru\v{z}\'{\i}k, U. Stefanelli, C. Zanini.
\newblock{  Quasistatic evolution of magnetoelastic plates via dimension reduction.}
\newblock{\em Discrete Contin. Dyn. Syst.} \textbf{35} (2015), 2615--2623.



\bibitem{kruzik.stefanelli.zeman}
M. Kru\v{z}\'{\i}k, U. Stefanelli, J. Zeman.
\newblock{Existence results for incompressible magnetoelasticity.}
\newblock{\em Discrete Contin. Dyn. Syst.} \textbf{35} (2015), 5999--6013.

\bibitem{li.melcher} X. Li, C. Melcher. Stability of axisymmetric chiral skyrmions. {\em J. Funct. Anal.} \textbf{275} (2018), 2817--2844.

\bibitem{liakhova}
J. Liakhova.
\newblock{ A theory of magnetostrictive thin films with applications.}
\newblock{Ph.D. Thesis, University of Minnesota, 1999.}

\bibitem{liakhova.luskin.zhang}
J. Liakova, M. Luskin, T. Zhang.
\newblock{  Computational modeling of ferromagnetic shape memory thin films.}
\newblock{\em Ferroelectrics.} \textbf{342} (2006), 7382.

\bibitem{lund.muratov}
R. G. Lund, C. B. Muratov. One-dimensional domain walls in thin ferromagnetic films with fourfold anisotropy. {\em Nonlinearity}, \textbf{29} (2016), 1716--1734.

\bibitem{luskin.zhang}
M. Luskin, T. Zhang.
\newblock{ Numerical analysis of a model for ferromagnetic shape memory thin films}.
\newblock{\em Comput. Methods Appl. Mech. Engrg.} \textbf{196} (2007), 37--40.

{\MMM
\bibitem{mainik.mielke05}
A. Mainik, A. Mielke. Existence results for energetic models for rate-independent systems. {\em Calc.
Var.} \textbf{22} (2005), 73--99.}

\bibitem{mainik.mielke}
A. Mainik, A. Mielke.
\newblock{Global Existence for Rate-Independent Gradient Plasticity at Finite Strain.}
\newblock{\em J. Nonlinear Sci.} \textbf{19} (2009), 221--248.


\bibitem{marcus.mizel}
M. Marcus, V. J. Mizel.
\newblock{Transformations by functions in Sobolev spaces and lower semicontinuity for parametric variational problems.}
\newblock{\em Bull. Am. Math. Soc.} \textbf{79} (1973), 790--795.

\bibitem{melcher}
C. Melcher. Chiral skyrmions in the plane. {\em  Proc. R. Soc. Lond. Ser. A Math. Phys. Eng. Sci.} \textbf{470} (2014), p. 20140394. 

\bibitem{mielke}
A. Mielke.
\newblock{Evolution of rate-independent systems}. In C. M. Dafermos, E. Feireisl.
\newblock{\em Handbook of Differential Equations. Volume II. Evolutionary Equations.}
\newblock{Elsevier, Amsterdam, 2005.}

\bibitem{mielke.roubicek}
A. Mielke, T. Roub\'{i}\v{c}ek.
\newblock{ \em Rate-independent Systems. Theory and Application.}
\newblock{Springer, New York, 2015.}

\bibitem{moriya}
T. Moriya. Anisotropic superexchange interaction and weak ferromagnetism. \textit{Physical Review} \textbf{120} (1960), 91. 

\bibitem{mueller.qi.yan}
S. M\"{u}ller, T. Qi, B. S. Yan.
\newblock{On a new class of elastic deformations not allowing for cavitation.}
\newblock{\em Ann. Inst. Henri Poincaré Anal. Non Linéaire} \textbf{11} (1994) 217--243.

\bibitem{mueller.spector}
S. M\"{u}ller, S. J. Spector.
\newblock{An existence theory for nonlinear elasticity
that allows for cavitation.}
\newblock{\em Arch. Ration. Mech. Anal.}, \textbf{131} (1995), 1--66.

\bibitem{muratov.slastikov}
C. B. Muratov, V. V. Slastikov. Domain structure of ultrathin ferromagnetic elements in the presence of Dzyaloshinskii-Moriya interaction. {\em  Proc. R. Soc. Lond. Ser. A Math. Phys. Eng. Sci.} \textbf{473} (2017), 20160666. 



{\MMM
\bibitem{rogers}
R. C. Rogers. 
\newblock{Nonlocal variational problems in nonlinear electromagneto-elastostatics.}
\newblock{\em SIAM J. Math. Anal.}  \textbf{19} (1988), 1329-–1347.

}

\bibitem{roubicek.tomassetti}
T. Roub\'{i}\v{c}ek, G. Tomassetti.
\newblock{ A thermodynamically consistent model of magneto-elastic materials under diffusion at large strains and its analysis.}
\newblock{\em Z. Angew. Math. Phys.} \textbf{69} (2018), Article number: 55. 



\bibitem{rybka.luskin}
P. Rybka, M. Luskin. Existence of energy minimizers for magnetostrictive materials.
{\em SIAM J. Math. Anal.} \textbf{36} (2005), 2004--2019.



\bibitem{qi}
T. Qi.
\newblock{Almost-everywhere injectivity in nonlinear elasticity.}
\newblock{\em Proc. Roy. Soc.
Edinburgh Sect. A}, \textbf{109} (1988), 79--95.

{\MMM
\bibitem{sharma.saxena}
B. L. Sharma, P. Saxena.
\newblock{Variational principles of nonlinear magnetoelastostatics and their correspondences.}
\newblock{\em Math. Mech. Solids} \textbf{26} (2021), 1424--1454.
}

\bibitem{sverak}
V. \v{S}ver\'{a}k.
\newblock{Regularity properties of deformation with finite energy.}
\newblock{\em Arch. Ration. Mech. Anal.} \textbf{100} (1988), 105--127.


\bibitem{toupin1}
R. A. Toupin. 
\newblock{Elastic materials with couple stresses.}
\newblock{\em Arch. Ration. Mech. Anal.} \textbf{11} (1962), 385--414.

\bibitem{toupin2}
R. A. Toupin.
\newblock{Theories of elasticity with couple stress.}
\newblock{\em Arch. Ration. Mech. Anal.} \textbf{17} (1964), 85--112.









\end{thebibliography}
\end{document}